\title{Optimal Spectral Recovery of a Planted Vector in a Subspace}
\author[1]{Cheng Mao\thanks{Email: \textit{cheng.mao@math.gatech.edu}. Partially supported by NSF grants DMS-2053333 and DMS-2210734.}}
\author[2]{Alexander S.\ Wein\thanks{Email: \textit{aswein@ucdavis.edu}. Part of this work was done while with the Courant Institute at NYU, partially supported by NSF grant DMS-1712730 and by the Simons Collaboration on Algorithms and Geometry.}}
\affil[1]{School of Mathematics, Georgia Institute of Technology}
\affil[2]{Department of Mathematics, University of California, Davis}
\date{}
\begin{document}

\maketitle

\begin{abstract}
Recovering a planted vector $v$ in an $n$-dimensional random subspace of $\mathbb{R}^N$ is a generic task related to many problems in machine learning and statistics, such as dictionary learning, subspace recovery, principal component analysis, and non-Gaussian component analysis. In this work, we study computationally efficient estimation and detection of a planted vector $v$ whose $\ell_4$ norm differs from that of a Gaussian vector with the same $\ell_2$ norm. For instance, in the special case where $v$ is an $N \rho$-sparse vector with Bernoulli--Gaussian or Bernoulli--Rademacher entries, our results include the following:

(1) We give an improved analysis of a slight variant of the spectral method proposed by Hopkins, Schramm, Shi, and Steurer (2016), showing that it approximately recovers $v$ with high probability in the regime $n \rho \ll \sqrt{N}$. This condition subsumes the conditions $\rho \ll 1/\sqrt{n}$ or $n \sqrt{\rho} \lesssim \sqrt{N}$ required by previous work up to polylogarithmic factors. 
We achieve $\ell_\infty$ error bounds for the spectral estimator via a leave-one-out analysis, from which it follows that a simple thresholding procedure exactly recovers $v$ with Bernoulli--Rademacher entries, even in the dense case $\rho = 1$.

(2) We study the associated detection problem and show that in the regime $n \rho \gg \sqrt{N}$, any spectral method from a large class (and more generally, any low-degree polynomial of the input) fails to detect the planted vector. This matches the condition for recovery and offers evidence that no polynomial-time algorithm can succeed in recovering a Bernoulli--Gaussian vector $v$ when $n \rho \gg \sqrt{N}$.
\end{abstract}

\newpage

{
\hypersetup{hidelinks}
\tableofcontents
}

\newpage

\section{Introduction}

Suppose that an $n$-dimensional subspace of $\RR^N$ contains a planted vector $v$ and is otherwise uniformly random. Under what conditions can we efficiently recover $v$ given (a basis of) the subspace? In particular, when the vector $v$ is sparse, the task becomes finding the sparsest vector in the subspace. There has been significant interest in studying this generic problem in recent years \cite{demanet2014scaling,barak2014rounding,qu2016finding,sos-spectral,qu2020finding} due to its own interest and its connection to a variety of problems in machine learning and statistics. For example, finding a sparse vector in a subspace is related to sparse principal component analysis \cite{d2020sparse}, non-Gaussian component analysis
\cite{blanchard2006search}, clustering in the Gaussian mixture model \cite{davis2021clustering}, and the spiked transport model \cite{niles2019estimation}. 
It is also a subtask of algorithms for dictionary learning \cite{spielman2012exact,sun2015complete}.
See the survey~\cite{qu2020finding} for other recent advances in planted vector recovery and connections to other problems. 
In this work, we take a statistical-computational point of view to study the problems of estimating and detecting the planted vector $v$ in the subspace using computationally efficient algorithms.
After presenting our models and main results, we discuss in detail a wide range of related work in Section~\ref{sec:ex-work}.

\subsection{Models}

To characterize the conditions for efficient estimation of a planted vector in a subspace, let us consider two observation models. 
Throughout the paper, $N$ and $n$ will denote positive integers with $1 \le n \le N$ and $N \ge N_0$ for a sufficiently large absolute constant $N_0$.

\begin{model}[Gaussian basis]
\label{mod:gauss}
Fix a vector $v \in \RR^N$ to be estimated. 
For $n \le N$, let $\Yg \in \RR^{N \times n}$ be the matrix whose first column is $v$ and whose other $n-1$ columns are i.i.d.\ $\mathcal{N}(0, \frac{1}{N} I_N)$ vectors. 
Let $Q \in \RR^{n \times n}$ be an arbitrary orthogonal matrix. 
Suppose that we observe the matrix $\Yt = \Yg Q$.
\end{model}

\noindent The column span of $\Yt$ is an $n$-dimensional subspace of $\RR^N$ which contains $v$ as the planted vector and is otherwise uniformly random. 
Note that the columns of $\Yg$ are scaled to have approximately unit norms. 
Correspondingly, the interesting scenario is when $v$ is roughly a unit vector. 

Compared to the above model where we observe a Gaussian basis of the subspace, a more practical and harder problem is to find the planted vector given an arbitrary basis of the subspace. 
Since we can orthonormalize any given basis, it suffices to consider the following observation model. 

\begin{model}[Orthonormal basis]
\label{mod:orth}
Fix a unit vector $v \in \RR^N$ to be estimated. 
For $n \le N$, let $\Yg \in \RR^{N \times n}$ be the matrix whose first column is $v$ and whose other $n-1$ columns are i.i.d.\ $\mathcal{N}(0, \frac{1}{N} I_N)$ vectors. 
Suppose that we observe an arbitrary matrix $\Yh \in \RR^{N \times n}$ whose columns form an orthonormal basis for the column span of $\Yg$. 
\end{model}

\noindent It is without loss of generality to assume that the vector $v$ has unit norm in Model~\ref{mod:orth}, because the norm of $v$ is unidentifiable when an arbitrary basis is given. 

Our main results in Section~\ref{sec:estimate} require fairly minimal assumptions on the structure of $v$.
However, in order to discuss precise conditions for finding a planted (sparse) vector and compare to previous work, we will focus especially on the cases where $v$ has Bernoulli--Gaussian or Bernoulli--Rademacher entries, defined as follows. 

\begin{definition}[Bernoulli--Gaussian vector]
\label{def:bg}
We say that a random vector $v \in \RR^N$ is a Bernoulli--Gaussian vector with parameter $\rho \in (0, 1]$ and write $v \sim \BG(N, \rho)$, if the entries of $v$ are i.i.d.\ with 
$$
\begin{cases}
v_i = 0 & \text{ with probability } 1 - \rho , \\
v_i \sim \mathcal{N}(0, \frac{1}{N \rho}) & \text{ with probability } \rho . 
\end{cases}
$$ 
\end{definition}

\begin{definition}[Bernoulli--Rademacher vector]
\label{def:br}
We say that a random vector $v \in \RR^N$ is a Bernoulli--Rademacher vector with parameter $\rho \in (0, 1]$ and write $v \sim \BR(N, \rho)$, if the entries of $v$ are i.i.d.\ with 
$$
v_i = 
\begin{cases}
0 & \text{ with probability } 1 - \rho , \\
1/\sqrt{N \rho} & \text{ with probability } \rho/2 , \\
-1/\sqrt{N \rho} & \text{ with probability } \rho/2 . 
\end{cases}
$$ 
\end{definition}

\noindent The scaling in the above definitions is chosen so that $\|v\| \approx 1$. 
Note that the smaller $\rho$ is, the sparser $v$ is (in expectation). 
In particular, $\rho = 1$ corresponds to the densest case: $\BG(N, 1)$ reduces to the Gaussian distribution $\mathcal{N}(0, \frac{1}{N} I_N)$, and $\BR(N, 1)$ reduces to the distribution of a vector with independent Rademacher entries. 
The knowledge of the sparsity parameter $\rho$ is \emph{not} required for our estimators of $v$ in Section~\ref{sec:estimate}, but we assume that $\rho$ is known in other parts of the paper to ease the discussion.

\subsection{Problems and Main Results}
\label{sec:prob-res}

We consider two types of problems: \emph{estimation} and \emph{detection} of the planted vector in the subspace. We are interested in the regime where $N$ is growing and the other parameters $n$ and $\rho$ may depend on $N$.

\begin{problem}[Estimation]
\label{prob:estimate}
Observing the matrix $\Yt$ under Model~\ref{mod:gauss} or the matrix $\Yh$ under Model~\ref{mod:orth}, we aim to estimate or exactly recover the planted vector $v$ (with high probability), up to an unidentifiable sign flip.
\end{problem}

\begin{problem}[Detection]
\label{prob:detect} 
For parameters $n \le N$ and $\rho \in (0,1]$, define the following \emph{null} and \emph{planted} distributions respectively: 
\begin{itemize}
\item Under $\cQ$, observe $\Yt \in \RR^{N \times n}$ whose entries are i.i.d.\ $\mathcal{N}(0, \frac{1}{N})$ random variables.

\item Under $\cP$, first draw $v \sim \BG(N, \rho)$ and $Q \in \RR^{n \times n}$ a uniformly random orthogonal matrix, and then observe $\Yt = YQ$ according to Model~\ref{mod:gauss}. 
\end{itemize}
We aim to test between the hypotheses $\cQ$ and $\cP$ (with vanishing error probability).
\end{problem}

\noindent Note that we are focusing on the case $v \sim \BG(N, \rho)$ for the detection task. We remark that if $\rho = 1$ then the planted model $\cP$ coincides with the null model $\cQ$ by the rotational invariance of Gaussian vectors. 

Estimation in Model~\ref{mod:orth} is at least as difficult as estimation in Model~\ref{mod:gauss} (since orthonormalizing the columns of $\tilde Y$ reduces Model~\ref{mod:gauss} to Model~\ref{mod:orth}). Also, for $v \sim \BG(N, \rho)$, estimation in Model~\ref{mod:gauss} is at least as difficult as detection (which we justify formally in Section~\ref{sec:reduction}). 
Therefore, to characterize the optimal condition for finding a planted Bernoulli--Gaussian vector in a subspace, it suffices to provide an upper bound for estimation under Model~\ref{mod:orth} and a matching lower bound for detection (Problem~\ref{prob:detect}). 
Our main results in the case $v \sim \BG(N, \rho)$ can be summarized in the following informal theorem. 

\begin{theorem}[Informal statement of main results]
The critical condition for efficient recovery of a planted vector $v \sim \BG(N, \rho)$ in an $n$-dimensional random subspace of $\RR^N$ is $n \rho \ll \sqrt{N}$, in the following sense: 
\begin{itemize}
\item
``Upper bound'':
Assume Model~\ref{mod:orth} with $v = v'/\|v'\|$, where $v' \sim \BG(N, \rho)$. 
If $n \rho \ll \sqrt{N}$, $n \ll N$, and $\rho$ is bounded away from $1$, then a spectral method can be used to accurately estimate $v$ entrywise with high probability. 

\item
``Lower bound'':
Consider Problem~\ref{prob:detect}. If $n \rho \gg \sqrt{N}$, then no algorithm from a class of spectral and low-degree methods can distinguish between $\cQ$ and $\cP$ with vanishing error probability. 
\end{itemize}
\end{theorem}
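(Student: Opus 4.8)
The plan is to prove the two halves by different methods: a spectral-relaxation argument sharpened by a leave-one-out analysis for the ``upper bound,'' and a low-degree moment computation for the ``lower bound.''

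\textbf{Upper bound.} Let $\hat y_1,\dots,\hat y_N\in\RR^n$ denote the rows of the observed orthonormal basis $\Yh$; then a vector in the subspace is $\Yh w$ with $\|\Yh w\|_2=\|w\|_2$ and $\|\Yh w\|_4^4=\sum_i\langle\hat y_i,w\rangle^4$, and the planted vector $v$ corresponds to a unit $w^\star$ for which this quartic form is anomalously large (if $\rho<1/3$) or small (if $\rho>1/3$). Following a variant of the Hopkins--Schramm--Shi--Steurer method I would form the $n^2\times n^2$ matrix $M$ obtained by reshaping $\sum_i(\hat y_i\hat y_i^\top)^{\otimes 2}$ and subtracting its expectation under a uniformly random subspace, and decompose $M=(\text{null mean})+(\text{signal})+E$, where the signal term is, to leading order, rank one along $\mathrm{vec}(w^\star (w^\star)^\top)$ with magnitude $\asymp\big|\,\|v\|_4^4-3\|v\|_2^4/N\,\big|\asymp|1-3\rho|/(N\rho)$ for $v\sim\BR(N,\rho)$ --- which is exactly why $\rho$ must be bounded away from $1/3$. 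The crux is to show that $\|E\|_{\mathrm{op}}$ lies below this signal magnitude precisely when $n\rho\ll\sqrt N$ (the extra assumption $n\ll N$ enters when passing from the Gaussian basis $\Yg$ of Model~\ref{mod:gauss} to its orthonormalization, via concentration of the Gram matrix $\Yg^\top\Yg$ around $I_n$). I would bound $\|E\|_{\mathrm{op}}$ by the moment/trace method, estimating $\mathbb{E}\,\mathrm{tr}(E^{2k})$ and tracking how the sparsity $\rho$ weights the combinatorial sum over closed walks; this is the step where prior analyses were lossy and where the sharp condition $n\rho\ll\sqrt N$ (which subsumes both $\rho\ll1/\sqrt n$ and $n\sqrt\rho\lesssim\sqrt N$ up to logs, and allows $\rho=1$) should emerge. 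A Davis--Kahan bound then produces a unit vector $\hat w$ with $\langle\hat w,w^\star\rangle^2=1-o(1)$ after reshaping the leading eigenvector of $M$ and extracting its top singular vector, hence an $\ell_2$-consistent estimate $\hat v=\Yh\hat w$ of $v$.

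\textbf{From $\ell_2$ to exact recovery.} To upgrade $\|\hat v-v\|_2=o(1)$ to exact recovery I would run a leave-one-out analysis in the style of entrywise eigenvector perturbation bounds: for each coordinate $a$, introduce an auxiliary instance with the $a$-th row of $\Yg$ resampled, whose spectral estimator $\hat v^{(a)}$ is independent of the $a$-th data coordinate and provably close to $\hat v$; combined with a coordinatewise fluctuation bound for $\hat v^{(a)}$, this yields $\|\hat v-v\|_\infty\ll1/\sqrt{N\rho}$. Since the nonzero entries of $v$ have magnitude exactly $1/\sqrt{N\rho}$, thresholding $\hat v$ at half that value recovers $\mathrm{supp}(v)$ and the signs, hence $v$ up to sign; the threshold can be read off from the empirical gap in the sorted entries of $\hat v$, so $\rho$ need not be known.

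\textbf{Lower bound.} For the impossibility side I would compute the low-degree likelihood ratio for Problem~\ref{prob:detect}. Writing $L=\mathrm d\cP/\mathrm d\cQ$ and expanding in Hermite polynomials of the i.i.d.\ $\mathcal{N}(0,1/N)$ entries, one checks from the structure $\Yt=\Yg Q$ with $Q$ Haar that $\cP$ and $\cQ$ are both symmetric under a global sign flip and have identical first- and second-moment tensors, so the first nonzero Fourier weight of $L$ sits at degree four; there the discrepancy is driven by $\mathbb{E}[v_i^4]-3\,\mathbb{E}[v_i^2]^2\asymp|1-3\rho|/(N^2\rho)$ together with Weingarten-type averages over $Q$. (Equivalently, detection here is morally an order-four tensor PCA instance in dimension $n$ with signal-to-noise ratio $\asymp\sqrt N/\rho$, whose spectral/low-degree threshold is $\sqrt N/\rho\asymp n$.) Summing the degree-four contributions yields a term in $\|L^{\le D}\|^2$ that is $\Theta(1)$ at the scaling $n\rho\asymp\sqrt N$ and $o(1)$ once $n\rho\gg\sqrt N$; a geometric-type bound then controls the higher even-degree terms, giving $\|L^{\le D}\|^2=1+o(1)$ for $D$ polylogarithmic and hence failure of all degree-$D$ polynomial tests. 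For the explicit class of spectral methods I would either invoke the standard reduction from spectral distinguishers to low-degree polynomials, or argue directly that for any matrix whose entries are degree-$\le D$ polynomials of $\Yt$ the top eigenvalue has overlapping fluctuations under $\cP$ and $\cQ$, so no threshold on it separates the two hypotheses.

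\textbf{Main obstacle.} The hardest step is the sharp operator-norm bound on the noise matrix $E$: getting the threshold $n\rho\ll\sqrt N$ rather than the weaker conditions of prior work requires a delicate accounting of the trace-method expansion in which $\rho$ and the two dimension scales $n,N$ interact, handling the sparse regime and the dense case $\rho=1$ uniformly, and the estimate must mesh with the bookkeeping of the leave-one-out argument used for the entrywise bound. On the lower-bound side the analogous difficulty is carrying out the combined Weingarten/Bernoulli--Rademacher moment computation at all even degrees and verifying that nothing beyond degree four dominates --- the payoff being that both thresholds land at $n\rho\asymp\sqrt N$.
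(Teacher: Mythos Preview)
Your high-level strategy is sound and matches the paper in outline---spectral method plus Davis--Kahan for the upper bound, leave-one-out for the entrywise upgrade, low-degree likelihood ratio for the lower bound---but the concrete execution differs in two places, one of which is a genuine obstacle.

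\textbf{Upper bound.} You propose the $n^2\times n^2$ reshaping of $\sum_i(\hat y_i\hat y_i^\top)^{\otimes 2}$ and a trace-method bound on its noise. The paper instead works with the smaller $n\times n$ matrix $\hat M=\sum_i(\|\hat y_i\|^2-\tfrac{n-1}{N})\hat y_i\hat y_i^\top-\tfrac{3}{N}I_n$ and bounds the noise by an explicit five-term block decomposition (Lemmas~\ref{lem:l1}--\ref{lem:l5}) using matrix Bernstein and chi-square tails rather than the trace method. Both routes should land at $n\rho\ll\sqrt N$; the paper's is more direct and makes the log factors explicit. For the orthonormal basis the paper's key step is not merely ``concentration of the Gram matrix'' but a sharp leverage-score estimate $|y_i^\top A^{-1}y_i-\|y_i\|^2|=\tilde O(n/N^{3/2}+|v_i|\sqrt n/N+v_i^2 n/N)$ (Lemma~\ref{lem:lev}), which is what transports the threshold from the Gaussian to the orthonormal model. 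Your leave-one-out sketch is correct in spirit; the paper also relies on a rank-one perturbation bound (Lemma~\ref{lem:rank-one-perturb}) that scales with $|b^\top u_1|$ rather than $\|b\|$, without which the entrywise errors would not be small enough.

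\textbf{Lower bound.} Here your proposed route would hit real difficulty. Expanding $d\cP/d\cQ$ directly in the model $\tilde Y=YQ$ with Haar $Q$ forces Weingarten integrals over the orthogonal group at every even degree, coupled to the Bernoulli--Rademacher moments---exactly the combined computation you flag as the main obstacle. The paper sidesteps this entirely by first reformulating Problem~\ref{prob:detect} as an equivalent model (Lemma~\ref{lem:equiv-detect}) in which one observes $N$ i.i.d.\ samples $\tilde y_i\sim\mathcal N(x_i u,\,I_n-uu^\top)$ with $x_i\sim\BR(\rho)$ and $u$ uniform on the sphere. In this form the Haar matrix is gone; each sample is conditionally Gaussian, and the Hermite expansion factorizes into the closed formula
\[
\Adv_{\le D}^2=\sum_{d=0}^D\EE[\langle u,u'\rangle^d]\sum_{|\alpha|=d}\prod_{i=1}^N\big(\EE_{x\sim\nu}[h_{\alpha_i}(x)]\big)^2
\]
(Lemma~\ref{lem:formula}). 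Bounding this needs only $\EE[\langle u,u'\rangle^d]\le(d/n)^{d/2}$ and $(\EE[h_k(x)])^2\le C^k\rho^{2-k}$, with no Weingarten calculus at all. Without this reframing, it is not clear your direct approach would go through.
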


\noindent In the above informal theorem, we have omitted secondary assumptions and polylogarithmic factors for readability. 
See Sections~\ref{sec:appl} and~\ref{sec:spectral} for the full, rigorous statements. 
In light of the lower bound, we conjecture that in the regime $n \rho \gg \sqrt{N}$, no polynomial-time algorithm can estimate the planted vector $v$ consistently with high probability; see Section~\ref{sec:spectral} for further discussion. Throughout, our focus is on the case $n \ll N$, i.e., the dimension of the subspace is small compared to that of the ambient space. In Model~\ref{mod:gauss} (but not Model~\ref{mod:orth}), the regime $n > N$ makes sense but our upper and lower bounds do not match and so the critical condition remains open.

Our results in Section~\ref{sec:appl} also include an analogous upper bound for \emph{Bernoulli--Rademacher} (instead of Bernoulli--Gaussian) vectors with the following differences: $\rho$ must now be bounded away from $1/3$ instead of $1$, and the estimation guarantee can be strengthened to \emph{exact} recovery of $v$ (with high probability) due to the discreteness of $v$. An earlier version of this paper~\cite{mao2021optimal} gave a matching lower bound for detection of Bernoulli--Rademacher vectors, showing failure of spectral and low-degree methods when $n \rho \gg \sqrt{N}$; we have chosen to omit this here in light of the discussion on lattice basis reduction in Section~\ref{sec:ex-work}.

Let us now introduce the spectral method for recovering $v$ under Model~\ref{mod:orth}, which is a slight variant of the spectral method proposed by Hopkins, Schramm, Shi, and Steurer~\cite{sos-spectral}.  
Let $\yh_i^\top$ be the $i$th row of the matrix $\Yh \in \RR^{N \times n}$ that we observe. 
Define an $n \times n$ matrix 
\begin{equation}
\Mh := \sum_{i=1}^N \left( \| \yh_i \|^2 - \frac{n-1}{N} \right) \yh_i \yh_i^\top - \frac{3}{N} I_n .
\label{eq:def-mh}
\end{equation} 
Let $\hat{\ub}$ be the leading eigenvector of $\Mh$ with unit norm, that is, the eigenvector whose associated eigenvalue has the largest magnitude. 
Our main upper bound (Theorem~\ref{thm:spectral}) shows that, up to a sign flip of $\hat{\ub}$, the vector $\Yh \hat{\ub}$ is entrywise close to the planted vector $v$ with high probability.

Note that the matrix $\hat{M}$ above has entries that are degree-four polynomials in the entries of the input $\hat Y$. Intuitively, this spectral method is designed to exploit the fact that the planted vector has an $\ell_4$ norm differing from that of a Gaussian vector of the same $\ell_2$ norm; we explain in Section~\ref{sec:first} how this works on a more technical level. Simpler spectral methods (e.g.~based only on the first two or three moments of $\hat Y$) are doomed to fail because in our primary examples of interest (planted Bernoulli--Gaussian or Bernoulli--Rademacher vector), the planted vector has entries whose 1st, 2nd, and 3rd moments match that of a Gaussian, and so the 4th moment of the data is the first to contain useful information.

A related and much simpler algorithm is to threshold the values $\|\hat y_i\|^2$ in order to identify the support of $v$. Since $\|\tilde y_i\|^2 = v_i^2 + n/N + \tilde O(\sqrt{n}/N)$, this method succeeds when $\rho \ll 1/\sqrt{n}$. However, since $n \le N$ this is sub-optimal compared to our condition $n \rho \ll \sqrt{N}$.

\subsection{Statistical Reformulations}
\label{sec:reformulate}

As we will show in the proof of Lemma~\ref{lem:equiv-detect}, Model~\ref{mod:gauss} can be restated in the following equivalent form. This model is sometimes called \emph{non-Gaussian component analysis} (e.g.~\cite{blanchard2006search,diakonikolas2021non,dudeja2022statistical}), which we discuss further in Section~\ref{sec:ex-work}.

\begin{model}[Non-Gaussian component analysis]
\label{mod:gauss-alt}
Fix a vector $v \in \RR^N$ to be estimated. 
For $n \le N$, draw a random vector $u$ from the the uniform distribution over the unit sphere in $\RR^n$. 
Conditional on $u$, draw independent samples $\yt_1,\ldots,\yt_N$ where $\yt_i \sim \mathcal{N} \big( v_i u, \frac{1}{N}(I_n - uu^\top) \big)$.
Suppose that we observe the matrix $\Yt \in \RR^{N \times n}$ whose rows are $\yt_1^\top, \dots, \yt_N^\top$.
\end{model}

\noindent 
In other words, instead of viewing the $n$ columns of $\Yt$ as a basis of a subspace, we now view the $N$ rows of $\Yt$ as independent samples. 
Note that here the planted direction $u$ corresponds the first row of the matrix $Q$ in Model~\ref{mod:gauss}. 
Moreover, each vector $\yt_i$ has a component of size $v_i$ along the direction $u$, and is Gaussian in all directions orthogonal to $u$. 
As a result, recovering the hidden direction $u$ is called non-Gaussian component analysis (particularly in the case where $v$ has entries drawn i.i.d.\ from some non-Gaussian distribution).

This equivalent model brings a few benefits. Conceptually, Model~\ref{mod:gauss-alt} fits better in the typical statistical framework, where $n$ is the dimension, $N$ is the sample size, and the rows of $\Yt$ are the independent samples. 
Technically, estimating the planted vector $v$ boils down to estimating the planted direction $u$, which is what the spectral estimator is doing. 
Furthermore, our proof of the low-degree lower bounds awedit{(see Section~\ref{sec:proof-lower}) is greatly simplified by working with Model~\ref{mod:gauss-alt} instead of Model~\ref{mod:gauss}, as it eliminates the need to work with high-order moments of a random orthogonal matrix $Q$}.

We note that the assumption $n \le N$ in Model~\ref{mod:gauss-alt} is not necessary from the perspective of non-Gaussian component analysis, but it is natural from the perspective of a planted vector in a subspace (since the subspace must have smaller dimension than the ambient space) and so our focus will be on the case $n \le N$; we will in fact often take the liberty of assuming $n \ll N$.

Despite the equivalence of Models~\ref{mod:gauss} and~\ref{mod:gauss-alt}, let us emphasize that Model~\ref{mod:orth} (where we observe an orthonormal basis) is strictly more general than both. 
Our upper bounds for Model~\ref{mod:orth} in Section~\ref{sec:orth} require additional proof ideas to address the complication introduced by the orthonormalization. 

\begin{remark}[Noisy model]
\label{rmk:noisy}
In Model~\ref{mod:gauss-alt}, we can potentially add noise to the observations: Suppose that instead of $\yt_i$, we observe $\yt_i + z_i$ for $z_i \sim \mathcal{N}(0, \frac{\sigma^2}{N} I_n)$. 
Each observation now has distribution $\mathcal{N} \big( (v_i + z_i^\top u) u, \frac{1+\sigma^2}{N}(I_n - uu^\top) \big)$.
The problem then becomes recovering the vector $\tilde v$ with entries $\tilde v_i = v_i + z_i^\top u$ given independent observations from $\mathcal{N} \big( \tilde v_i u, \frac{1+\sigma^2}{N}(I_n - uu^\top) \big)$, which (by rescaling) is simply an instance of the original problem.
Moreover, our upper bounds in Section~\ref{sec:estimate} are valid for any fixed planted vector without a distributional assumption, so the same spectral method can be used to estimate $\tilde v$ with theoretical guarantees. 
If $v$ possesses structure---for example, it is sparse---then we can further threshold the estimator of $\tilde v$ to obtain an estimator of $v$. We will not discuss the noisy case further in this work.
\end{remark}

Another way of reformulating the planted vector problem is through the language of principal component analysis (PCA) \cite{candes2011robust,d2020sparse}. 
Recall the matrix $Y = [v \; g_2 \; \cdots \; g_n]$ from Model~\ref{mod:gauss}, where $g_2, \dots, g_n$ are i.i.d.\ $\mathcal{N}(0, \frac 1N I_N)$ vectors. 
Let $\tilde v = v - g_1$ where $g_1$ is another independent $\mathcal{N}(0, \frac 1N I_N)$ vector. 
Hence $Y = [\tilde v \; 0 \; \cdots \; 0] + G$ where $G = [g_1 \; \cdots \; g_n]$. 
Let $u \in \RR^n$ denote the first row of the orthogonal matrix $Q \in \RR^{n \times n}$. 
Then we can view Model~\ref{mod:gauss} as a spiked component with additive Gaussian noise: We observe $\tilde Y = Y Q = \tilde v u^\top + Z$, where $Z = GQ$ has i.i.d.\ $\mathcal{N}(0, \frac 1N I_N)$ columns.

A related viewpoint taken by~\cite{d2020sparse} is to write $\tilde Y = v u^\top + Z - g_1 u^\top$. Now the planted signal is $v$ instead of $\tilde v$, and the extra term $-g_1 u^\top$ is thought of as an adversarial perturbation. In this sense, the planted vector problem is a hard instance for robust sparse PCA; see Section~6.3 of~\cite{d2020sparse}.

\subsection{Organization}
In Section~\ref{sec:ex-work}, we discuss related models and results in the literature. 
In Section~\ref{sec:estimate}, we study estimation of a planted vector in a subspace assuming either Model~\ref{mod:gauss} or~\ref{mod:orth}. 
In particular, we provide entrywise error bounds for the spectral estimator and identify the conditions for recovery of a planted Bernoulli--Gaussian or Bernoulli--Rademacher vector. 
In Section~\ref{sec:lower}, we turn to the problem of detecting a planted vector in a subspace. We establish lower bounds for a class of spectral and low-degree methods that match our upper bounds for the estimation problem. 
The proofs of our results are provided in Sections~\ref{sec:proof-upper} and~\ref{sec:proof-lower}, as well as in the appendix.

\subsection{Notation}
For a positive integer $k$, let $[k] := \{1, 2, 3, \dots, k\}$. 
We use $C$ and $c$, possibly with subscripts, to denote positive constants.
We are interested in the regime where the ambient dimension $N$ is growing and the other parameters $n$ and $\rho$ may depend on $N$.
We use standard asymptotic notation $O(\cdot)$, $o(\cdot)$, $\Omega(\cdot)$, $\omega(\cdot)$; more precisely, $O(\cdot)$ and $\Omega(\cdot)$ hide constant factors (not depending on $N,n,\rho$), $o(1)$ stands for a quantity that depends only on $N$ and tends to $0$ as $N \to \infty$, and $\omega(1)$ stands for a quantity that depends only on $N$ and tends to $\infty$ as $N \to \infty$. We use $\tilde O(\cdot)$ to hide a polylogarithmic factor, that is, a factor $(\log N)^C$ for an absolute constant $C > 0$. (Throughout, we assume $N \ge 3$ so that $\log N > 1$.)
We write $a \ll b$ to mean $a \le \frac{b}{(\log N)^C}$ for an absolute constant $C > 0$.
Let $\land$ and $\lor$ denote the min and the max operator between two real numbers respectively.
We define the sign function $\mathsf{sign}(\cdot)$ by $\mathsf{sign}(x) = 1$ when $x > 0$; $\mathsf{sign}(x) = -1$ when $x < 0$; and $\mathsf{sign}(x) = 0$ when $x = 0$. We use $\mathbf{1}\{A\}$ to denote the $\{0,1\}$-valued indicator for an event $A$. We use the convention $\NN = \{0,1,2,\ldots\}$.

We use $\|v\|$ to denote the Euclidean norm of a vector $v$ and use $\|M\|$ to denote the spectral (operator) norm of a matrix $M$. 
We use $\|v\|_p$ to denote the $\ell_p$ norm of a vector $v$ where $p \ge 0$. 
Given a symmetric matrix $M$, the leading eigenvalue of $M$ refers to the eigenvalue that has the largest magnitude, and the leading eigenvector of $M$ refers to the eigenvector associated with the leading eigenvalue. 
All eigenvectors in consideration are taken to be unit vectors unless otherwise specified.
We use $I_k$ to denote the $k \times k$ identity matrix.
For a vector $u$, we will sometimes say that a statement is true ``up to a sign flip of $u$'', by which we mean the statement is either true for $u$ or $-u$.

\section{Related Work}
\label{sec:ex-work}

Recovery of a planted vector in a subspace has been widely studied in recent years and is related to many other problems in the literature, which we now discuss.

\subsubsection*{Planted Sparse Vector}
Qu, Sun, and Wright \cite{qu2016finding} showed that, information-theoretically, a non-convex $\ell_1/\ell_2$ minimization program recovers the planted vector $v$ as long as $\rho \le c$ and $n \le c N$ for a sufficiently small constant $c>0$ . 
However, $\ell_1/\ell_2$ minimization may be computationally infeasible, and existing polynomial-time algorithms can only recover $v$ under stronger assumptions. 
Qu, Sun, and Wright showed in the same work that, if $\rho \le c$ and $n \ll N^{1/4}$ where $\ll$ hides a logarithmic factor, then a non-convex approach based on alternating directions recovers $v$ with high probability. 
In the regime $\rho \ll 1/\sqrt{n}$ and $n \le c N$, Demanet and Hand \cite{demanet2014scaling} proposed an $\ell_1/\ell_\infty$ minimization program which recovers $v$ with high probability and can be solved efficiently using linear programming. 
The logarithmic factors in this result were improved by d'Orsi et al.~\cite{d2020sparse} using semidefinite programming, under a more general model for sparse PCA with adversarial perturbations. Also recall from the end of Section~\ref{sec:prob-res} that a much simpler thresholding algorithm also achieves the same condition $\rho \ll 1/\sqrt{n}$ up to logarithmic factors.
In the regime $\rho \le c$ and $n \sqrt{\rho} \le c \sqrt{N}$, Barak et al.\ \cite{barak2014rounding} proposed a sum-of-squares method to estimate $v$ based on $\ell_2/\ell_4$ minimization. Finally, motivated by the sum-of-squares method, Hopkins et al.\ \cite{sos-spectral} proposed a fast spectral method to estimate $v$, assuming $\rho \le c$ and $n \ll \sqrt{N}$. 

The set of conditions $n \rho \ll \sqrt{N}$ and $n \ll N$ needed for our upper bounds subsumes all those required by previous polynomial-time algorithms up to logarithmic factors.
Moreover, we establish a computational lower bound for a class of spectral and low-degree methods (discussed further below) in the regime $n \rho \gg \sqrt{N}$, demonstrating the optimality of our analysis.

\subsubsection*{Comparison to Hopkins et al.\ (2016)}
Since our spectral method is a slight variant of the one proposed in \cite{sos-spectral}, let us emphasize the improvements over this work. 
First, \cite{sos-spectral} studies the matrix $\hat M$ in \eqref{eq:def-mh} without the last term $- \frac{3}{N} I_n$. 
This centering turns out to be essential for recovering a Rademacher vector ($v \sim \BR(N,\rho)$ with $\rho = 1$), in which case we show that $v$ can be recovered using the \emph{bottom} eigenvalue of $\hat M$ provided that $n \ll \sqrt{N}$; this dense case was not treated in \cite{sos-spectral}.

Second, our analysis is sharper and more delicate than that of~\cite{sos-spectral} in order to achieve the condition $n \rho \ll \sqrt{N}$ rather than $n \ll \sqrt{N}$. This improvement is not so difficult for the case of a Gaussian basis, but requires new ideas for the case of an orthonormal basis. One key ingredient is a sharper bound on how much the ``leverage scores'' (the norms of the rows of the observed matrix) change when orthonormalizing a Gaussian basis. Specifically, our Equation~\eqref{eq:lev} improves upon Lemma~B.4 in \cite{sos-spectral}: for example, if $N \le \tilde O(n)$ and $\|v\|_\infty \le \tilde O(\frac{1}{ \sqrt{N \rho} })$, then \eqref{eq:lev} gives a bound of order $\tilde O(\frac{1}{\sqrt{N}} + \frac{1}{N \rho})$ while Lemma~B.4 in \cite{sos-spectral} gives a trivial bound of order $\tilde O(1)$. 

Finally, we establish an entrywise bound on the estimation error $\hat Y \hat{\mathbf{u}} - v$, which is stronger than the $\ell_2$ error bound in \cite{sos-spectral}. 
This requires a new leave-one-out analysis which is perhaps the most involved proof in this work; see the proof of Theorem~\ref{thm:spectral}. 
Thanks to the entrywise error bound, we can threshold $\hat Y \hat{\mathbf{u}}$ to recover a Bernoulli--Rademacher vector \emph{exactly}, improving over the results on approximate recovery in \cite{barak2014rounding,sos-spectral}.

\subsubsection*{Non-Gaussian Component Analysis}

Model~\ref{mod:gauss-alt} and variants are often referred to as \emph{non-Gaussian component analysis (NGCA)}, the problem of finding a low-dimensional non-Gaussian subspace in high-dimensional Gaussian data. Various algorithmic results exist; see e.g.~\cite{blanchard2006search,bean,VX-junta,TV-non,GS-entropy} and references therein. However, prior work has not pinned down the precise conditions for recovery that we establish here, focusing instead on coarser-grained objectives such as achieving polynomial sample complexity in certain variations of the model.

Starting from the work of~\cite{DKS-sq}, various special cases of NGCA have been shown to be computationally hard in the \emph{statistical query model}, suggesting a statistical-to-computational gap. In contrast to our work, the focus has been on settings where the planted non-Gaussian distribution matches many moments of a Gaussian.

The connection between NGCA and recovering a planted vector in a subspace (i.e., the equivalence between Models~\ref{mod:gauss} and~\ref{mod:gauss-alt} discussed in Section~\ref{sec:reformulate}) seems to have gone largely unnoticed, with one exception being Section~6.3 of~\cite{d2020sparse}.

A further generalization of NGCA is the \emph{spiked transport model} of~\cite{niles2019estimation}. Here, two high-dimensional distributions are assumed to differ only on a low-dimensional subspace (but the marginals need not be Gaussian). The results of~\cite{niles2019estimation} establish the optimal statistical rate for estimating the Wasserstein distance between the two distributions, and give evidence for statistical-to-computational gaps via the statistical query model.

\subsubsection*{Related Spectral Methods}

We discuss here a few related results that have also used the spectral method of~\cite{sos-spectral} or variants thereof. First, the spectral method in Algorithm~8.1 of~\cite{d2020sparse} is a generalization of this spectral method for a related sparse PCA model. After the initial version of our work appeared, a few other relevant results have emerged. Davis et al.~\cite{davis2021clustering} use the same spectral method for clustering a mixture of Gaussians with unknown covariance, a problem which includes Model~\ref{mod:gauss} with $v \sim \BR(N,1)$ as a special case. Even more recently, Dudeja and Hsu~\cite[Section~6.2]{dudeja2022statistical} gave a higher-order generalization of the spectral method which can solve NGCA when the planted non-Gaussian distribution has $k$th moment differing from that of a Gaussian; the basic version of the spectral method that we study here corresponds to $k=4$.

\subsubsection*{Eigenvector Perturbation}
Our entrywise bound on the error of the spectral estimator is akin to $\ell_\infty$ bounds for eigenvector perturbation, which have been widely studied in recent years \cite{koltchinskii2016perturbation,fan2018eigenvector,cape2019two,abbe2020entrywise,lei2019unified}. 
In particular, leave-one-out analysis is a useful technique for establishing bounds in the $\ell_\infty$ norm. 
See the survey \cite{chen2020spectral} for more recent developments in spectral methods and a discussion on the framework of leave-one-out analysis. 
Our analytic method is therefore a contribution to this line of research.

\subsubsection*{Dictionary Learning}
Finding a planted sparse vector in a subspace is a subroutine of several algorithms for dictionary learning \cite{spielman2012exact,barak2014rounding,sun2015complete}; see more references in the survey \cite{qu2020finding}. 
While our results in this work are specific to recovering a planted vector, we remark that maximizing the $\ell_4$ norm over unit vectors is a general strategy for recovering a sparse vector. It can be realized via non-spectral methods and has been successfully applied to problems such as dictionary learning and blind source separation \cite{barak2014rounding,zhai2019understanding,zhai2020complete}. 
We believe this class of methods deserve further study because, despite their methodological success, there are very few results regarding their optimality under statistical models.

\subsubsection*{Low-degree Lower Bounds}
The framework we adopt for probing the computational complexity of hypothesis testing tasks is to prove unconditional lower bounds against the class of \emph{low-degree polynomial algorithms} (which includes a large class of spectral methods). This idea arose from a line of work on the sum-of-squares hierarchy~\cite{sos-clique,HS-bayesian,sos-hidden,sam-thesis} (see also~\cite{kunisky2019notes} for a survey) and has proven successful at explaining statistical-to-computational gaps in many classical settings: For problems such as planted clique, community detection, sparse PCA, tensor PCA, and more, the conjectured ``hard'' regime (where no polynomial-time algorithms are known) coincides precisely with the regime in which low-degree polynomial algorithms have been proven to fail. In this work, we show low-degree hardness of the planted vector detection problem (Problem~\ref{prob:detect}) with $v \sim \BG(N,\rho)$ in the regime $n \rho \gg \sqrt{N}$. This implies in particular that \emph{any} spectral method from a large class must fail in this regime (see Theorem~\ref{thm:sp-lower-bd}), providing a strong converse to our upper bounds. Thus, when $n \rho \gg \sqrt{N}$, the planted vector problem suffers from a computational barrier akin to many other settings in the literature, leading us to expect that no polynomial-time algorithm can succeed in this regime; see Section~\ref{sec:spectral} for further discussion (and see the discussion on lattice basis reduction below for an important caveat).

We now discuss some related lower bounds. Prior to our work, there were low-degree lower bounds suggesting hardness of Problem~\ref{prob:detect} in the specific regime $n = \Omega(N)$ and $\rho \gg 1/\sqrt{N}$; see Remark~2.4 of~\cite{rip-cert}. Our results generalize this substantially by allowing $n \ll N$. There are also existing lower bounds against the related (but incomparable) class of sum-of-squares algorithms in the case $v \sim \BR(N,1)$ when $n \gg N^{2/3}$~\cite{ghosh2020sum,davis2021clustering}, which is sub-optimal compared to our condition $n \gg \sqrt{N}$. After the initial version of our work appeared, we became aware of a recent low-degree lower bound by d'Orsi et al.~\cite[Theorem~6.7]{d2020sparse}, showing hardness of detecting a particular planted vector under similar conditions to our result; one advantage of our result is that we treat all low-degree polynomials rather than just multilinear ones, which is crucial for proving failure of spectral methods (Theorem~\ref{thm:sp-lower-bd}). A byproduct of our lower bound that may be of independent interest is a general-purpose formula (Lemma~\ref{lem:formula}) for low-degree analysis of NGCA problems with an arbitrary non-Gaussian planted distribution and an arbitrary prior on the planted direction. We also remark that after the initial appearance of this work, Dudeja and Hsu~\cite[Section~F.4]{dudeja2022statistical} gave a similar low-degree lower bound for NGCA which gives sharp results even when the planted non-Gaussian distribution matches many moments of a Gaussian; similar results are also implicit in~\cite{sq-list-decode}.

\subsubsection*{Lattice Basis Reduction}

After the initial version of this work, a new algorithmic result appeared for recovering a \emph{discrete} vector, e.g., $v \sim \BR(N, \rho)$, under Model~\ref{mod:gauss-alt} (or equivalently, Model~\ref{mod:gauss}). Specifically, a method based on the Lenstra--Lenstra--Lov\'asz lattice basis reduction algorithm has been developed independently by Diakonikolas and Kane \cite{diakonikolas2021non} and Zadik et al.~\cite{zadik2021lattice}.
This method exactly recovers $v \sim \BR(N, \rho)$ in polynomial time provided that $N \ge n+1$, which seemingly challenges our conclusion that the fundamental condition for polynomial-time recovery is $n \rho \ll \sqrt{N}$.
However, the lattice basis reduction crucially relies on $v$ being discrete (or extremely close to discrete); in particular, it does not apply to the Bernoulli--Gaussian case $v \sim \BG(N, \rho)$. 
Moreover, lattice basis reduction does not fall in the scope of spectral or low-degree methods, so it does not contradict our lower bounds (the initial version of our work~\cite{mao2021optimal} gave a low-degree lower bound for the case $v \sim \BR(N,\rho)$).

In hindsight, this new algorithmic development does not undermine the low-degree framework: It was already established that low-degree algorithms are only conjectured to be optimal (among all polynomial-time algorithms) for problems that have at least a constant amount of ``noise'' per input~\cite{sam-thesis}; see Section~1.3 of~\cite{zadik2021lattice} for discussion. In light of this, we still expect that $n \rho \ll \sqrt{N}$ is the fundamental condition for polynomial-time recovery when $v$ has entries that are sufficiently noisy, e.g., Bernoulli--Rademacher convolved with a Gaussian of small constant variance, or Bernoulli--Gaussian. We have chosen to consider the Bernoulli--Gaussian prior for our low-degree lower bounds so as to avoid this issue of noise.

\section{Spectral Estimation of a Planted Vector}
\label{sec:estimate}

To study estimation of a planted vector in a subspace, we first consider Model~\ref{mod:gauss} where a Gaussian basis of the subspace is given, in Section~\ref{sec:upper}. 
Then, in Section~\ref{sec:orth}, we turn to Model~\ref{mod:orth} where an arbitrary orthonormal basis is given and show that our algorithmic guarantees continue to hold.
We then apply the general upper bound to the Bernoulli--Gaussian and Bernoulli--Rademacher models in Section~\ref{sec:appl}.

\subsection{Estimation from a Gaussian Basis}
\label{sec:upper}

First, let $v$ be a fixed vector in $\RR^N$. 
Under Model~\ref{mod:gauss}, to estimate the planted vector $v$, we consider the $n \times n$ matrix 
\begin{equation}
\Mt:= \sum_{i=1}^N \left( \|\yt_i\|^2 - \frac{n-1}{N} \right) \yt_i \yt_i^\top - \frac{3}{N} I_n , 
\label{eq:def-mt}
\end{equation}
where $\yt_i^\top$ denotes the $i$th row of the observed matrix $\Yt$. 
Let $\tilde{\ub}$ be the leading eigenvector of $\Mt$. Then the vector $\Yt \tilde{\ub}$ is an estimator of the planted vector $v$. 
The following theorem gives conditions under which $\Yt \tilde{\ub}$ is guaranteed to be close to $v$ entrywise with high probability (up to a sign flip).

\begin{theorem}
\label{thm:entrywise}
Fix any constant $c > 0$. Assume Model~\ref{mod:gauss} with $v$ satisfying $\|v\| = 1$ and $\left| \|v\|_4^4 - \frac{3}{N} \right| \ge \frac{c}{N}$. 
Suppose that 
\begin{equation}
\eps := \frac{1}{ \|v\|_4^4 } \frac{n}{N^{3/2}}  + \frac{\|v\|_\infty}{ \|v\|_4^2 } \sqrt{\frac{n}{N}} \ll 1 . 
\label{eq:def-eps}
\end{equation}
Let $\yt_j^\top$ be the $j$th row of $\Yt$ for $j \in [N]$, and let $\tilde{\ub}$ be the leading eigenvector of the matrix $\Mt$ defined in \eqref{eq:def-mt}. 
Up to a sign flip of $\tilde{\ub}$, it holds with probability $1 - N^{-\omega(1)}$ that 
\begin{equation}
\left| \tilde y_j^\top \tilde{\ub} 
- v_j \right|
\le \tilde O \left( \eps \cdot \Big( |v_j| + \frac{1}{\sqrt{N}} \Big) \right) \quad \text{ for all } j \in [N], 
\label{eq:entrywise-bound}
\end{equation}
and, as a result,
\begin{equation}
\|\Yt \tilde{\ub} - v\|
\le \tilde O \left( \epsilon \right) . 
\label{eq:simplified}
\end{equation} 
\end{theorem}

\noindent
Recall that we use the notation $\ll$ to hide a polylogarithmic factor in $N$.
We also use $\tilde{O}$ to hide polylogarithmic factors in $N$, as well as a factor of $1+1/c$. Moreover, for simplicity we restrict our attention to the case where $v$ has exactly unit norm.
In fact, these simplifications can be lifted; see Theorem~\ref{thm:entrywise-general} in the appendix for a more precise version of the bound \eqref{eq:simplified}. 

To interpret the above theorem, first note that the matrix $\Mt$ defined by \eqref{eq:def-mt} is entrywise a degree-four polynomial of the observations. Therefore, it is not surprising that the result depends on the $\ell_4$ norm of $v$. 
Moreover, $\EE \|g\|_4^4 = 3/N$ for $g \sim \mathcal{N}(0,  \frac{1}{N} I_N)$, so it is natural to assume that $\|v\|_4^4$ is bounded away from $3/N$, or else we cannot hope to identify $v$ using the first four moments alone. If $\|v\|_4^4 \ge \frac{3}{N} + \frac{c}{N}$ then the leading eigenvalue of $\tilde M$ will be the maximum eigenvalue (which is positive); if $\|v\|_4^4 \le \frac{3}{N} - \frac{c}{N}$ then the leading eigenvalue will be the minimum eigenvalue (which is negative).
The $\ell_4$ norm of a unit vector $v \in \RR^N$ can be viewed as an analytic notion of sparsity: We always have $1/N \le \|v\|_4^4 \le 1$, and the larger $\|v\|_4^4$ is, the ``sparser'' $v$ is. 
Hence, the bound \eqref{eq:simplified} says that it is easier to estimate a sparser vector planted in a random subspace, as expected.

We remark that the term $\frac{1}{ \|v\|_4^4 } \frac{n}{N^{3/2}}$ in the definition of $\epsilon$ dominates the term $\frac{\|v\|_\infty}{ \|v\|_4^2 } \sqrt{\frac{n}{N}}$ in some important cases: 
\begin{itemize}
\item
First, in the regime $n \ll \sqrt{N}$ (where the result of~\cite{sos-spectral} applies), we can use the bound $\|v\|_\infty \le \|v\|_4$ to show that $\frac{1}{\|v\|_4^4} \frac{n}{N^{3/2}} \ll 1$ implies $\frac{\|v\|_\infty}{\|v\|_4^2}\sqrt{\frac{n}{N}} \ll 1$. 

\item
Second, suppose that $\|v\|_\infty^2 \le \tilde{O}(\sqrt{n}/N)$. 
Note that this is a natural condition because if $v$ has an entry $v_i^2 \gg \sqrt{n}/N$, then $|v_i|$ can already be estimated accurately using $\|\tilde y_i\|^2 = v_i^2 + n/N + \tilde O(\sqrt{n}/N)$ alone. 
Under this condition, the bound $\|v\|_4^4 \le \|v\|_\infty^2 \|v\|^2 = \|v\|_\infty^2$ yields $\frac{\|v\|_\infty}{\|v\|_4^2} \sqrt{\frac{n}{N}} \le \frac{\|v\|^2_\infty}{\|v\|_4^4} \sqrt{\frac{n}{N}} \le \tilde{O}\left(\frac{1}{\|v\|_4^4} \frac{n}{N^{3/2}}\right)$.
\end{itemize}

Furthermore, note that \eqref{eq:entrywise-bound} is an entrywise bound for estimating each coordinate of $v$. 
The average entry size of a dense unit vector is $1/\sqrt{N}$, while the nonzero entries of a sparse vector typically have larger sizes. Hence the error $\eps \big(|v_j| + \frac{1}{\sqrt{N}}\big)$ of estimating $v_j$ is indeed small. Finally, summing the squares of the entrywise errors in \eqref{eq:entrywise-bound} yields \eqref{eq:simplified}.

\subsection{Estimation from an Orthonormal Basis}
\label{sec:orth}

We now study estimation of a planted vector in a subspace given an arbitrary orthonormal basis of the subspace as in Model~\ref{mod:orth}. The following result is comparable to Theorem~\ref{thm:entrywise} for the Gaussian basis, and much of the discussion in the previous section also applies here.

\begin{theorem}
\label{thm:spectral}
Fix any constant $c > 0$. Assume Model~\ref{mod:orth} with $v$ satisfying $\left| \|v\|_4^4 - \frac{3}{N} \right| \ge \frac{c}{N}$. 
Suppose that 
\begin{equation}
\eps := \frac{1}{ \|v\|_4^4 } \frac{n}{N^{3/2}}  + \frac{\|v\|_\infty}{ \|v\|_4^2 } \sqrt{\frac{n}{N}} \ll 1.
\label{eq:spec-assume}
\end{equation}
Let $\hat y_j^\top$ be the $j$th row of $\hat Y$ for $j \in [N]$, and let $\hat{\ub}$ be the leading eigenvector of the matrix $\Mh$ defined in \eqref{eq:def-mh}. 
Up to a sign flip of $\hat{\ub}$, it holds with probability $1 - o(1)$ that 
\begin{equation*}
\left| \hat y_j^\top \hat{\ub} 
- v_j \right|
\le \tilde O \left( \eps \cdot \Big( |v_j| + \frac{1}{\sqrt{N}} \Big) \right)  \quad \text{ for all } j \in [N],
\end{equation*}
and, as a result,
\begin{equation*}
\|\Yh \hat{\ub} - v\|
\le \tilde O \left( \epsilon \right) . 
\end{equation*} 
\end{theorem}

\noindent
The proof of the above theorem constitutes an important part of our theoretical contribution. 
In order to deal with the orthonormalized columns of $\Yh$ and capture the correct dependency on the parameters, we establish a series of delicate estimates based on the Sherman--Morrison formula in Section~\ref{sec:pre}. 
In particular, we use moments of an inverse Wishart matrix together with Chebyshev's inequality to establish Lemma~\ref{lem:lev} (see the proof of Lemma~\ref{lem:trace} in the appendix), a step that is not needed for the Gaussian basis or in the prior work~\cite{sos-spectral}. 
This also gives rise to the error probability $o(1)$ in the above result instead of the stronger $N^{-\omega(1)}$ as in the previous subsection. 
We do not think that this is fundamental, but improving the error probability to $N^{-\omega(1)}$ in Lemma~\ref{lem:lev} would require more technical work.
With the estimates proved in Section~\ref{sec:pre}, we eventually establish Theorem~\ref{thm:spectral} in Section~\ref{sec:pf-spectral} using a careful leave-one-out analysis.

\subsection{Application to Special Planted Vectors}
\label{sec:appl}

To further clarify the above theorems, we establish corollaries for the special cases where the planted vector $v$ has Bernoulli--Gaussian or Bernoulli--Rademacher entries as in Definitions~\ref{def:bg} and~\ref{def:br}, respectively. 
First, we have the following result for estimating a planted Bernoulli--Gaussian vector.

\begin{corollary} 
\label{cor:orth-bg}
Assume Model~\ref{mod:orth} with $v = v'/\|v'\|$ and $v' \sim \BG(N, \rho)$. 
Fix a constant $c \in (0, 0.1)$. Suppose that $\frac{1}{N} \ll \rho \le 1 - c$ and  
\begin{equation*}
\xi := \frac{n \rho}{\sqrt{N}}  + \sqrt{\frac{n}{N}} \ll 1 . 
\end{equation*}
Let $\hat y_j^\top$ be the $j$th row of $\hat Y$ for $j \in [N]$, and let $\hat{\ub}$ be the leading eigenvector of the matrix $\Mh$ defined in \eqref{eq:def-mh}. 
Up to a sign flip of $\hat{\ub}$, it holds with probability $1 - o(1)$ that 
\begin{equation*}
\left| \hat y_j^\top \hat{\ub} 
- v_j \right|
\le \tilde O \left( \frac{\xi}{\sqrt{N \rho}} \right)   \quad \text{ for all } j \in [N],
\end{equation*}
and
$$
\|\Yh \hat{\ub} - v\| \le \tilde{O} (\xi) .
$$
\end{corollary}

\noindent
In the above corollary, we assume $\rho \gg \frac{1}{N}$ to ensure that $v \sim \BG(N, \rho)$ is nonzero with high probability. 
Moreover, recall that if $\rho = 1$, then $\BG(N, \rho)$ reduces to the Gaussian distribution $\mathcal{N}(0, \frac{1}{N} I_N)$, so it is impossible to distinguish $v$ from other Gaussian vectors in the subspace. 
The above result says that, if the planted vector $v$ is $N \rho$-sparse where $\rho \le 1 - c$, and the dimension of the subspace $n$ is small compared to the ambient dimension ($n \ll N$) and
\begin{equation*} 
n \rho \ll \sqrt{N} , 
\end{equation*}
then we can estimate $v$ consistently. 
Note that a nonzero entry of $v \sim \BG(N, \rho)$ is of order $\frac{1}{\sqrt{N \rho}}$, so the entrywise error $\tilde{O} \big( \frac{\xi}{\sqrt{N \rho}} \big)$ is indeed much smaller if $\xi \ll 1$.

The next result specializes to the Bernoulli--Rademacher case. 
By virtue of the entrywise error bound in Theorem~\ref{thm:spectral} and the discrete nature of the planted vector $v$, we are able to exactly recover $v$ up to a sign flip. 

\begin{corollary} 
\label{cor:orth-br}
Assume Model~\ref{mod:orth} with $v = v'/\|v'\|$ and $v' \sim \BR(N, \rho)$. 
Fix a constant $c \in (0, 0.1)$. Suppose that $\rho \gg \frac{1}{N}$, $|\rho - \frac{1}{3}| \ge c$, and  
\begin{equation*}
\frac{n \rho}{\sqrt{N}}  + \sqrt{\frac{n}{N}} \ll 1 . 
\end{equation*}
Let $\hat y_j^\top$ be the $j$th row of $\hat Y$ for $j \in [N]$, and let $\hat{\ub}$ be the leading eigenvector of the matrix $\Mh$ defined in \eqref{eq:def-mh}. 
Up to a sign flip of $\hat{\ub}$, it holds with probability $1 - o(1)$ that 
$$
\frac{\hat v}{\|\hat v\|} = v \qquad \text{where} \qquad
\hat v := \mathsf{sign} \big(\hat Y \hat{\ub} \, \big) \cdot \mathbf{1} \left\{ \big| \hat Y \hat{\ub} \,
\big| \ge 0.5 \cdot \max_{i \in [N]} \Big| \big( \hat Y \hat{\ub} \, \big)_i \Big| \right\} , 
$$
where the functions $\mathsf{sign} (\cdot)$ and $\mathbf{1}\{|\cdot| \ge \mathrm{const}\}$ are applied entrywise. 
\end{corollary}

\noindent
Note that the estimator $\hat v / \|\hat v\|$ does not depend on $\rho$, so we do not need to know the sparsity in order to recover the planted vector $v$. 

In contrast to the Bernoulli--Gaussian case, here even in the densest case $\rho = 1$ where the planted vector $v$ has Rademacher entries, the estimator still recovers $v$ exactly as long as $n \ll \sqrt{N}$. 
This is because the $\ell_4$-norm of a Bernoulli--Rademacher vector $v \sim \BR(N, \rho)$ differs from that of a Gaussian vector 
whenever $\rho$ is bounded away from $1/3$. 
When $\rho = 1/3$, that is, when $\EE \|v\|_4^4$ coincides with that of a Gaussian vector, the spectral method based on degree-four polynomials is not expected to work. 
After the initial version of this work, a degree-six spectral method was shown to solve the case $\rho = 1/3$ when $n \ll N^{1/3}$, along with a matching low-degree lower bound~\cite[Section~6.2]{dudeja2022statistical}. Also, as discussed in Section~\ref{sec:ex-work}, new results appearing after the the initial version of this work~\cite{diakonikolas2021non,zadik2021lattice} gave an algorithm for recovering a Bernoulli--Rademacher vector under the weaker condition $N \ge n+1$ using completely different (and non-spectral) methodology that relies heavily on $v$ being (nearly) discrete.

\section{Detection of a Planted Vector and Low-Degree Lower Bounds}
\label{sec:lower}

Next, we turn to Problem~\ref{prob:detect}, detection of a planted Bernoulli--Gaussian vector in a subspace. 

\subsection{Reduction from Detection to Estimation}
\label{sec:reduction}

We first give a polynomial-time reduction showing that the estimation problem is at least as hard as the detection problem. That is, we show that any algorithm $\tilde v$ for the estimation problem can be turned into an algorithm $\tilde \psi$ for the detection problem that succeeds in the same regime of parameters.

\begin{theorem}
\label{thm:reduction}
Consider Problem~\ref{prob:detect}. 
Fix a constant $c_1 \in (0, 0.1)$. 
There exist constants $C_2, c_3 > 0$ depending only on $c_1$ such that the following holds. 
Suppose that $\frac{1}{N} \ll \rho \le 1 - c_1$ and $N \ge C_2 n$. 
Let $\tilde v = \tilde v(\tilde Y) \in \RR^N$ be any estimator of $v$ that takes values in the column span of $\tilde Y$. 
Define a test 
$$
\tilde \psi := 
\begin{cases}
\cQ & \text{ if } \Big| \|\tilde v\|_1 \big/ \|\tilde v\| - \sqrt{2 N / \pi} \Big| < c_1 \sqrt{N} \big/ 4 , \vspace{1mm} \\ 
\cP & \text{ if } \Big| \|\tilde v\|_1 \big/ \|\tilde v\| - \sqrt{2 N / \pi} \Big| \ge c_1 \sqrt{N} \big/ 4 .
\end{cases}
$$
If $\tilde v$ satisfies $\|\tilde v - v\| \le c_3$ with probability at least $1-\delta$ over $\cP$, then $\tilde \psi$ distinguishes between $\cQ$ and $\cP$ with error probability at most $\delta + N^{-\omega(1)}$; that is,
$$
\PP_{\cP}\left\{\tilde\psi = \cQ\right\} + \PP_{\cQ}\left\{\tilde\psi = \cP\right\} \le \delta + N^{-\omega(1)}.
$$
\end{theorem}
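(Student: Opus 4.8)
The plan is to distinguish $\cQ$ from $\cP$ by estimating the planted vector and then computing the ratio $\|\tilde v\|_1 / \|\tilde v\|$ as a test statistic, exploiting the fact that a Bernoulli--Rademacher vector and a Gaussian vector have different normalized $\ell_1$ norms. First I would analyze the behavior of the test statistic under $\cQ$: here $\tilde Y$ has i.i.d.\ $\mathcal{N}(0,1/N)$ entries and, crucially, the column span of $\tilde Y$ is a uniformly random $n$-dimensional subspace. Any unit vector in such a subspace (in particular $\tilde v / \|\tilde v\|$, whatever estimator $\tilde v$ is used, since $\tilde v$ lies in the column span) is a uniformly random unit vector in $\RR^N$ up to the measurability of the estimator; the key point is that $\tilde v/\|\tilde v\|$ is a deterministic function of a rotationally invariant object, so its distribution is rotationally invariant, hence uniform on the sphere. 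A uniform unit vector $w$ on $\mathbb{S}^{N-1}$ satisfies $\|w\|_1 \approx \sqrt{2N/\pi}$ with high probability, since $\sqrt{N} w$ is approximately a standard Gaussian vector and $\EE|g_i| = \sqrt{2/\pi}$; a standard concentration argument (e.g.\ via Lipschitz concentration of $w \mapsto \|w\|_1$ on the sphere, which has Lipschitz constant $\sqrt{N}$, giving fluctuations of order $\sqrt{\log N}$) yields $\big|\|\tilde v\|_1/\|\tilde v\| - \sqrt{2N/\pi}\big| = \tilde O(1) \ll c_1 \sqrt{N}/4$ except with probability $N^{-\omega(1)}$. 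So under $\cQ$, the test outputs $\cQ$ with probability $1 - N^{-\omega(1)}$, controlling the term $\PP_\cQ\{\tilde\psi = \cP\}$.

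Next I would handle $\cP$. On the event $\|\tilde v - v\| \le c_3$ (which holds with probability $\ge 1-\delta$ by hypothesis), I want to show $\big|\|\tilde v\|_1/\|\tilde v\| - \sqrt{2N/\pi}\big| \ge c_1\sqrt{N}/4$, so that $\tilde\psi$ outputs $\cP$. Since $v \sim \BR(N,\rho)$ with $\|v\| \approx 1$, a Chernoff/binomial concentration bound gives $\|v\|_1 = \sum_i |v_i| \approx \sqrt{N\rho}\cdot\big(\rho \cdot \tfrac{1}{\sqrt{N\rho}} \cdot N\big)$; more carefully, $|v_i| \in \{0, 1/\sqrt{N\rho}\}$ with the nonzero value attained $\approx N\rho$ times, so $\|v\|_1 \approx N\rho / \sqrt{N\rho} = \sqrt{N\rho}$, while $\|v\| \approx 1$, giving $\|v\|_1/\|v\| \approx \sqrt{N\rho}$. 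The assumption $\rho$ bounded away from $2/\pi$ is exactly what forces $\sqrt{N\rho}$ to be bounded away from $\sqrt{2N/\pi}$: indeed $|\sqrt{\rho} - \sqrt{2/\pi}| \ge c$ for some $c = c(c_1) > 0$ when $|\rho - 2/\pi| \ge c_1$ (using $\rho, 2/\pi \in [0,1]$), so $\big|\|v\|_1/\|v\| - \sqrt{2N/\pi}\big| \gtrsim \sqrt{N}$. Then the perturbation bound: $\big|\|\tilde v\|_1/\|\tilde v\| - \|v\|_1/\|v\|\big|$ is controlled by $\|\tilde v - v\|$ via the triangle inequality $\big|\|\tilde v\|_1 - \|v\|_1\big| \le \sqrt{N}\|\tilde v - v\|$ together with $\big|1/\|\tilde v\| - 1\big| \lesssim \|\tilde v - v\|$ (since $\|v\| \approx 1$), yielding a perturbation of order $\sqrt{N} c_3$. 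Choosing $c_3 = c_3(c_1)$ small enough relative to the gap, this is $\le c_1\sqrt{N}/8$, so on the good event the test correctly outputs $\cP$, and $\PP_\cP\{\tilde\psi = \cQ\} \le \delta + N^{-\omega(1)}$ after also intersecting with the $N^{-\omega(1)}$-probability concentration events for $\|v\|$ and $\|v\|_1$.

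Adding the two error contributions gives the claimed bound $\delta + N^{-\omega(1)}$. The main obstacle I anticipate is the analysis under $\cQ$: one must argue carefully that $\tilde v/\|\tilde v\|$ is genuinely distributed as a uniform unit vector, which requires that the estimator $\tilde v(\tilde Y)$ be equivariant under the action of the orthogonal group on $\RR^N$ in the appropriate sense — or, more robustly, one sidesteps this by noting that $\tilde v$ lies in a uniformly random subspace and bounding $\sup_{w \in \mathbb{S}^{N-1} \cap V} \big|\|w\|_1 - \sqrt{2N/\pi}\big|$ uniformly over the random subspace $V$ via an $\varepsilon$-net argument on the $n$-dimensional sphere, which costs only an extra factor of $\sqrt{n} \le \sqrt{N}$ inside a logarithm and still leaves the deviation $\tilde O(\sqrt{n}) \ll \sqrt{N}$ when $N \ge C_2 n$. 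This net-based uniform bound is the one genuinely technical estimate; everything else is routine binomial and Lipschitz concentration.
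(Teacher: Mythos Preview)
Your proposal is correct and matches the paper's proof: the paper handles $\cP$ by the same concentration-plus-perturbation argument you describe, and handles $\cQ$ via Lemma~\ref{lem:l2-l1}, which gives exactly the uniform-over-subspace bounds $\sup_u \big|\|\tilde Y u\|/\|u\| - 1\big| \le c$ and $\sup_u \big|\|\tilde Y u\|_1/\|u\| - \sqrt{2N/\pi}\big| \le c\sqrt{N}$ that your $\varepsilon$-net argument would establish. Your first instinct (rotational invariance forcing $\tilde v/\|\tilde v\|$ to be uniform on the sphere) indeed fails for arbitrary estimators since $\tilde v(\cdot)$ need not be $O(N)$-equivariant, so the net-based uniform bound is the right move and is precisely what the paper does.
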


\noindent 
Recall that in Problem~\ref{prob:detect}, if $\rho = 1$, then the planted model $\cP$ coincides with the null model $\cQ$. 
Now suppose that $\rho$ is bounded away from $1$. 
The contrapositive of the above reduction says that, if there is no polynomial-time algorithm to distinguish between $\cQ$ and $\cP$ with vanishing error, then there is no polynomial-time algorithm to estimate $v$ consistently with high probability.
In the next subsection, we give evidence that the detection problem is computationally hard when $n \rho \gg \sqrt{N}$. In light of Theorem~\ref{thm:reduction}, this suggests that the estimation problem is also computationally hard in that regime. As a result, we conjecture that our upper bounds for estimation cannot be improved.

\begin{remark}
We note that our results do not formally rule out spectral and low-degree algorithms for \emph{estimating} $v$. Instead, our evidence for hardness of estimation takes the form of a two-step argument: The failure of spectral and low-degree methods for \emph{detection} (proved in the next section) leads us to conjecture that there is no polynomial-time algorithm for detection; this conjecture (if true) combined with Theorem~\ref{thm:reduction} implies that there is no polynomial-time algorithm for estimation. It may be possible to directly analyze the power of low-degree polynomials for estimation as in~\cite{schramm2022computational}, but we have not attempted this here.
\end{remark}

\subsection{Spectral Test and Lower Bounds}
\label{sec:spectral}

We now identify the regime in which the detection problem can be solved by spectral methods. To put our lower bound in perspective, we first show that if $n \rho \ll \sqrt{N}$ then the spectral method~\eqref{eq:def-mt} solves the detection problem. This follows readily from our existing analysis of the estimation problem (and is also implicit in the prior work of~\cite{sos-spectral}, as discussed at the end of Section~2.1 of~\cite{sos-spectral}).

\begin{theorem}
\label{thm:detect-upper}
Consider Problem~\ref{prob:detect}. 
Fix a constant $c \in (0, 0.1)$. 
Suppose that $\frac{1}{N} \ll \rho \le 1 - c$, $n \ll N$, and 
\begin{equation*}
n \rho \ll \sqrt{N} . 
\end{equation*}
Let $\tilde M$ be defined by \eqref{eq:def-mt}. Then it holds that
$$
\PP_{\cP} \left\{ \|\Mt\| < \frac{c}{N \rho} \right\} 
+ \PP_{\cQ} \left\{ \|\Mt\| > \frac{c}{4 N \rho} \right\}  
\le N^{-\omega(1)} . 
$$
\end{theorem}

\noindent As a result, in the regime $n \rho \ll \sqrt{N}$, thresholding $\|\tilde M\|$ at the level $\frac{c}{2 N \rho}$ succeeds in distinguishing between $\cP$ and $\cQ$ with error probability $N^{-\omega(1)}$.

Note that the entries of $\tilde M$ are degree-four polynomials in the input variables (the entries of $\tilde Y$). \emph{A priori}, we might hope to construct a better spectral method by choosing different polynomials, perhaps of higher degree. Our next result shatters these hopes, showing that when $n \rho \gg \sqrt{N}$, no spectral method of polynomial dimension and bounded-degree entries can distinguish between $\mathcal{P}$ and $\mathcal{Q}$ with error probability $N^{-\omega(1)}$. This provides a converse to Theorem~\ref{thm:detect-upper}.

\begin{theorem}
\label{thm:sp-lower-bd}
Consider Problem~\ref{prob:detect}. For any constants $\ell \ge 1$, $d \ge 1$, $\epsilon \in (0,1)$, there exist constants $N_0, C > 0$ such that the following holds. Let $N \ge N_0$, let $M = M(\Yt)$ be any real symmetric matrix of dimension at most $N^\ell$ whose entries are polynomials of degree at most $d$ in the entries of the observation $\Yt$, and let $t > 0$ be any choice of threshold. If
\begin{equation*}
n \rho \gg \sqrt{N} ,
\end{equation*}
then it is impossible for $M$ to simultaneously satisfy both of the following:
\begin{itemize}
    \item $\PP_{\cP}\left\{\|M\| < (1+\epsilon)t\right\} \le \frac{\epsilon}{4}$, and
    \item $\PP_{\cQ}\left\{\|M\| > t\right\} \le N^{-C}.$
\end{itemize}
\end{theorem}

\noindent Together, Theorems~\ref{thm:detect-upper} and~\ref{thm:sp-lower-bd} characterize the limits of spectral methods for the detection problem (modulo some technical conditions, such as $n \ll N$ and the requirement that the error probability be at most $N^{-C}$ instead of merely $o(1)$). While it is conceivable that a different class of polynomial-time algorithms could break the $n \rho \gg \sqrt{N}$ barrier, we find this unlikely because spectral methods seem to be optimal among all known polynomial-time algorithms for a wide array of high-dimensional detection problems with hidden low-dimensional structures (so long as the problems are sufficiently ``noise-robust''); see~\cite{sos-hidden}.
While the subsequent works \cite{diakonikolas2021non,zadik2021lattice} break the $n \rho \gg \sqrt{N}$ barrier for $v \sim \BR(N, \rho)$ using lattice basis reduction, the methodology relies strongly on $v$ taking (nearly) discrete values and does not apply to $v \sim \BG(N, \rho)$.

In order to prove failure of spectral methods, we actually prove failure of an even larger class of algorithms, namely low-degree polynomials. This follows a framework proposed by~\cite{HS-bayesian,sos-hidden,sam-thesis} (see also~\cite{kunisky2019notes} for a survey). Let $\RR[y]_{\le D}$ denote the space of polynomials $\RR^{N \times n} \to \RR$ (that is, multivariate polynomials whose variables are the entries of an $N \times n$ matrix) of degree at most $D$. We aim to understand the degree-$D$ ``advantage''
\begin{equation} 
\Adv_{\le D} := \max_{f \in \RR[y]_{\le D}} \frac{\EE_{\cP}[f]}{\sqrt{\EE_{\cQ}[f^2]}}. 
\label{eq:adv}
\end{equation}
This is often called the \emph{norm of the low-degree likelihood ratio}, although we do not use this term here because the likelihood ratio may not exist in our setting. This quantity can be thought of as measuring the detection power of the best possible degree-$D$ polynomial $f$, where $f$ takes the observed matrix $\tilde Y$ as input and aims to output a large value under $\mathcal{P}$ and a small (close to zero) value under $\mathcal{Q}$. If $\Adv_{\le D}$ remains bounded as $N \to \infty$ for some $D = D(N)$ that grows super-logarithmically in $N$, this means that no logarithmic-degree polynomial can effectively separate $\mathcal{P}$ from $\mathcal{Q}$, which is considered evidence that no polynomial-time algorithm can perform detection with vanishing error probability (see~\cite{sam-thesis,kunisky2019notes}). The following result shows that the planted vector problem is low-degree hard in this sense whenever $n \rho \gg \sqrt{N}$.

\begin{theorem}
\label{thm:lower}
Consider Problem~\ref{prob:detect}. 
For any constant $C_1 > 0$, there exists a constant $C_2 = C_2(C_1) > 0$ such that if $n \rho \ge \sqrt{N} (\log N)^{C_2}$ and $D \le (\log N)^{C_1}$, then $\Adv_{\le D} \le 2$. 
\end{theorem}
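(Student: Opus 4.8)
The plan is to exploit the fact that the null $\cQ$ is a product of centered Gaussians, which makes the (rescaled) Hermite polynomials an orthogonal basis of $L^2(\cQ)$. Let $H_\alpha$ for $\alpha \in \NN^{N\times n}$ denote the product of one-variable Hermite polynomials evaluated at the rescaled entries $\sqrt N\,\tilde Y_{ij}$, normalized so that $\EE_\cQ[H_\alpha H_{\alpha'}] = \alpha!\,\delta_{\alpha\alpha'}$. Then by the standard orthogonal-expansion / Cauchy--Schwarz argument, $\Adv_{\le D}^2 = \sum_{1\le|\alpha|\le D} (\EE_\cP[H_\alpha])^2/\alpha!$, so it suffices to compute these coefficients and bound their weighted sum. (All moments of $\cP$ are finite, so the coefficients are well defined even though $\cP$ has no density.)

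The first substantive step is to reframe $\cP$ (which is Problem~\ref{prob:alt}). Writing $q := Q^\top e_1 \in \RR^n$, which is Haar-uniform on the sphere $S^{n-1}$, and expanding $\tilde Y = YQ$ row by row, one finds that under $\cP$ the rows of $\tilde Y$ are, conditionally on $(v,q)$, independent with $\tilde y_i \sim \mathcal N\big(v_i q,\ \tfrac1N(I_n - qq^\top)\big)$ --- equivalently $\langle \tilde y_i, q\rangle = v_i$ exactly, while $\tilde y_i$ is otherwise isotropic Gaussian in $q^\perp$. Thus $\cP$ simply plants a $\BR(N,\rho)$ vector along a hidden, uniformly random coordinate direction. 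Given this, I would prove the general-purpose formula (Lemma~\ref{lem:formula}) by a Hermite generating-function computation: since conditionally on $(v,q)$ the matrix $\sqrt N\,\tilde Y$ is Gaussian with mean $\sqrt N\, vq^\top$ and covariance $\Sigma = I_N\otimes(I_n - qq^\top)$, the identity $\sum_\alpha \frac{t^\alpha}{\alpha!}\EE_{\cP|v,q}[H_\alpha] = \exp\big(\langle t,\mu\rangle - \tfrac12 t^\top(I-\Sigma)t\big)$ collapses, using $I - \Sigma = I_N\otimes qq^\top$, to $\exp\big(\langle \sqrt N v,\, tq\rangle - \tfrac12\|tq\|^2\big)$, i.e.\ to the Hermite generating function of $\{H_\beta(\sqrt N v)\}_\beta$ in the substituted variable $s = tq \in \RR^N$. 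Matching coefficients of $t^\alpha$ and then averaging over $v$ and $q$ (which decouple) gives
\[
\EE_\cP[H_\alpha] \;=\; \Big(\prod_{i\in[N]} \gamma_{r_i(\alpha)}\Big)\,\EE_{q\sim S^{n-1}}\Big[\textstyle\prod_{j\in[n]} q_j^{\,c_j(\alpha)}\Big],
\]
where $r_i(\alpha), c_j(\alpha)$ are the row and column sums of $\alpha$ and $\gamma_m := \EE[H_m(\sqrt N\, v_1)]$ with $v_1\sim\BR(N,\rho)$. A direct computation gives $\gamma_0 = 1$, $\gamma_2 = 0$, $\gamma_m = 0$ for odd $m$, $\gamma_4 = \rho^{-1} - 3$, and more generally $\gamma_{2k} = \Theta(\rho^{-(k-1)})$ for $k\ge 2$; note $\gamma_4$ is bounded away from $0$ precisely because $|\rho - \tfrac13|$ is bounded below. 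Consequently $\EE_\cP[H_\alpha]$ vanishes unless every column sum of $\alpha$ is even and every row sum lies in $\{0,4,6,8,\dots\}$, so in particular $|\alpha|$ is even and $\ge 4$; in particular degree-$\le 3$ polynomials have no advantage.

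It then remains to bound $\sum_\alpha (\EE_\cP[H_\alpha])^2/\alpha!$ using the exact sphere moments $\EE_q[\prod_j q_j^{2a_j}] = \prod_j(2a_j - 1)!!\,\big/\,\prod_{i=0}^{|a|-1}(n+2i)$ (with $|a| := \sum_j a_j$), organizing the sum by the degree $d = |\alpha|$ and by the combinatorial shape of $\alpha$ (which rows are active and with which row sums, which columns are hit and with which column sums). The dominant shapes at degree $d = 4m$ are the ``regular'' ones: $m$ active rows, each with row sum $4$ placed as two $2$'s in two fresh columns; summing over these yields a contribution $\sum_{m\ge1}\frac{1}{m!}\big(\tfrac{N\gamma_4^2}{8n^2}\big)^m \le \exp\big(\tfrac{CN}{n^2\rho^2}\big) - 1$. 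Because $n\le N$ is always assumed, the hypothesis $n\rho \ge \sqrt N(\log N)^{C_2}$ forces $\rho \ge (\log N)^{C_2}/\sqrt N$, hence $N/(n^2\rho^2) \le (\log N)^{-2C_2}$ and the regular contribution is $o(1)$; moreover these same two facts make every ``irregularity'' in a shape (a row of sum $\ge 6$, a repeated column, a $4$ concentrated in a single column, etc.) cost a factor $\ll 1$, so the non-regular shapes together are also $o(1)$, and $D\le(\log N)^{C_1}$ lets us absorb the $d$-dependent combinatorial constants that appear by choosing $C_2 = C_2(C_1)$ large. Altogether $\Adv_{\le D}^2 \le 1 + o(1) < 4$, so $\Adv_{\le D}\le 2$.

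I expect the last step --- the uniform-over-shapes estimate --- to be the main obstacle: one must show the regular shape dominates at every degree, balancing the growth $\gamma_{2k}\sim\rho^{-(k-1)}$ against the decay from $1/\alpha!$ and from the sphere moments $\asymp n^{-|\alpha|/2}$ and against the number of shapes of each type, and then verify that the resulting series telescopes to $o(1)$ at the sharp threshold $n\rho\gtrsim\sqrt N$ with only polylogarithmic slack. By contrast, the reformulation and the closed-form coefficient identity should be comparatively clean.
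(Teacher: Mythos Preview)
Your setup and coefficient formula are correct, and the approach would succeed, but it takes a more laborious route than the paper. You expand $\Adv_{\le D}^2 = \sum_{\alpha\in\NN^{N\times n}} (\EE_\cP[H_\alpha])^2/\alpha!$ directly, obtaining the factorization $\EE_\cP[H_\alpha] = \big(\prod_i \gamma_{r_i(\alpha)}\big)\,\EE_q\big[\prod_j q_j^{c_j(\alpha)}\big]$, and are then forced to track both the row-sum and column-sum profiles of $\alpha$ simultaneously---your ``shape analysis,'' which you correctly flag as the main obstacle. The paper sidesteps this entirely via the replica identity $\Adv_{\le D}^2 = \EE_{u,u'}\langle L_u^{\le D}, L_{u'}^{\le D}\rangle$: after conditioning on a single direction $u$, the Hermite coefficients factor across rows, and the column structure collapses to the single scalar $\EE[\langle u,u'\rangle^d]$, yielding (Lemma~\ref{lem:formula})
\[
\Adv_{\le D}^2 \;=\; \sum_{d=0}^D \EE[\langle u,u'\rangle^d]\sum_{\beta\in\NN^N,\,|\beta|=d}\prod_i\big(\EE_{x\sim\nu}[h_{\beta_i}(x)]\big)^2,
\]
with the clean bound $\EE[\langle u,u'\rangle^d]\le(d/n)^{d/2}$. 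That your formula and the paper's agree is precisely the contingency-table identity $\sum_{\alpha:\,r(\alpha)=r,\,c(\alpha)=c} 1/\alpha! = d!/(r!\,c!)$, which you would in effect be rederiving by hand inside the shape combinatorics. With the column structure gone, the paper's final estimate is a short stratification over the support size $m$ of $\beta\in\NN^N$ (using $|\mathcal{A}(d,m)|\le N^m d^{d/2}$ and $(\EE[h_k(x)])^2\le C^k\rho^{2-k}$), giving $\Adv_{\le D}^2 \le 1 + 2\sum_{d\ge4}(C^4 d^4 N/(n^2\rho^2))^{d/4}$ directly---no dominant-shape argument needed. Your route buys a somewhat more explicit per-$\alpha$ coefficient, but the paper's buys a dramatically shorter endgame.
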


\noindent Theorem~\ref{thm:sp-lower-bd} will be deduced from Theorem~\ref{thm:lower} using a straightforward connection (made formal in~\cite{kunisky2019notes}) between spectral methods and logarithmic-degree polynomials: For a symmetric matrix $M$ and an integer $k = O(\log N)$, the polynomial $\Tr(M^{2k})$ serves as a good approximation to $\|M\|^{2k}$.
The proof of Theorem~\ref{thm:lower} is where most of the technical work takes place (see Sections~\ref{sec:low-deg-pf} and~\ref{sec:pf-adv}). 
In particular, we reformulate Problem~\ref{prob:detect} as Problem~\ref{prob:alt} (a fairly general variant of Model~\ref{mod:gauss-alt}).
Then, we establish a key ingredient of the proof---a new formula (Lemma~\ref{lem:formula}) for $\Adv_{\le D}$ in Problem~\ref{prob:alt}. Finally, by carefully controlling each term in this formula, we complete the proof in Section~\ref{sec:pf-adv}.

\medskip

Before ending the section, we remark that all the results in this section are valid with very minor changes if the planted vector $v$ is Bernoulli--Rademacher $\BR(N, \rho)$ instead of Bernoulli--Gaussian $\BG(N, \rho)$ in model $\cP$ in Problem~\ref{prob:detect}.
The only difference is that in Theorems~\ref{thm:reduction} and~\ref{thm:detect-upper}, instead of assuming $\rho \le 1 - c$ for a constant $c > 0$, we require $\rho$ to be bounded away from a different constant ($2/\pi$ and $1/3$, respectively).
The proofs carry over with straightforward modifications; for details, see an earlier version of this work \cite{mao2021optimal}.

\section{Proofs of Upper Bounds}
\label{sec:proof-upper}

In this section, we prove Theorem~\ref{thm:spectral} and establish other results along the way that provide intuition behind our upper bounds. 
The proofs of Theorem~\ref{thm:entrywise} and Corollaries~\ref{cor:orth-bg} and~\ref{cor:orth-br} are deferred to the appendix.

\subsection{A First Result and its Implication}
\label{sec:first}

As a first step towards proving upper bounds for our spectral estimator, we show that the matrix $\Mt$ defined in \eqref{eq:def-mt} is approximately rank-one.

\begin{proposition}
\label{prop:gauss-spec-norm}
Fix a vector $v \in \RR^N$. 
For $i \in [N]$, define $\yg_i^\top = ( v_i \; b_i^\top )$ for i.i.d.\ $b_i \sim \mathcal{N}(0, \frac{1}{N} I_{n-1})$. 
Define an $n \times n$ matrix
\begin{equation}
\Mg = \sum_{i=1}^N \left( \|\yg_i\|^2 - \frac{n-1}{N} \right) \yg_i \yg_i^\top - \frac{3}{N} I_n . 
\label{eq:def-mg}
\end{equation}
There is a universal constant $C>0$ such that the following holds. For any $\delta \in (0,1)$, define a quantity
\begin{align}
\eta = \eta(N,n,v,\delta) 
&:= C \bigg( \left( \|v\| + 1 \right) \frac{n \sqrt{\log(N/\delta)}}{N^{3/2}} + \|v\|_4^2 \frac{\sqrt{n \log(N/\delta)}}{N} \notag \\
& \qquad + \|v\|_6^3 \frac{ \sqrt{n} + \sqrt{\log(N/\delta)} }{\sqrt{N}} + \|v\|_\infty^2 \frac{n \log(N/\delta) + \log^2(N/\delta)}{N} \notag\\
& \qquad + \frac{ (n \log(N/\delta))^{3/2} }{N^2} + \frac{1}{N} \left| \|v\|^2 - 1 \right| \bigg).  \label{eq:def-eta}
\end{align} 
Suppose that $N \ge \log^7(N/\delta)$. 
Let $e_1$ be the first standard basis vector in $\RR^n$. 
Then it holds with probability at least $1-\delta$ that 
\begin{equation}
\left\| \Mg - \left( \|v\|_4^4 - \frac{3}{N} \right) e_1 e_1^\top \right\|
\le \eta . 
\label{eq:spec-bd}
\end{equation} 
\end{proposition}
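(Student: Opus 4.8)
The plan is to write $\Mg = \EE[\Mg] + (\Mg - \EE[\Mg])$ and control the two pieces separately: the mean matches the target matrix up to the last term of $\eta$, and the fluctuation is a sum of $N$ independent mean-zero matrices, which I bound via the matrix Bernstein inequality together with its scalar and vector analogues, after an appropriate truncation. \emph{Step 1 (the mean).} Using the Gaussian moment identities $\EE[b_i b_i^\top] = \frac1N I_{n-1}$, $\EE[\|b_i\|^2 b_i b_i^\top] = \frac{n+1}{N^2} I_{n-1}$, and the vanishing of odd moments, a direct block computation gives
\[
\EE[\Mg] = \Big( \|v\|_4^4 - \frac{3}{N} \Big) e_1 e_1^\top + \frac{\|v\|^2 - 1}{N} \big(I_n - e_1 e_1^\top\big),
\]
whose second term has operator norm exactly $\frac1N\big|\,\|v\|^2 - 1\,\big|$, the last summand of $\eta$. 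Hence it remains to bound $\|\Mg - \EE[\Mg]\|$ by the remaining terms of $\eta$ with probability $1-\delta$.

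\emph{Step 2 (decomposition of the fluctuation).} Writing $\yg_i \yg_i^\top = v_i^2 e_1 e_1^\top + v_i(e_1 \tilde b_i^\top + \tilde b_i e_1^\top) + \tilde b_i \tilde b_i^\top$, where $\tilde b_i \in \RR^n$ is the zero-padded $b_i$, and $\|\yg_i\|^2 - \frac{n-1}{N} = v_i^2 + w_i$ with $w_i := \|b_i\|^2 - \frac{n-1}{N}$, I expand the product and subtract its mean. This presents $\Mg - \EE[\Mg]$ as a sum of five pieces that are independent across $i$ and centered, grouped by their deterministic weight: (a) $\sum_i v_i^3(e_1\tilde b_i^\top + \tilde b_i e_1^\top)$; (b) $\sum_i v_i^2\big(\tilde b_i\tilde b_i^\top - \frac1N(I_n - e_1 e_1^\top)\big)$; (c) $\big(\sum_i v_i^2 w_i\big)\, e_1 e_1^\top$; (d) $\sum_i v_i w_i(e_1\tilde b_i^\top + \tilde b_i e_1^\top)$; and (e) $\sum_i\big(w_i \tilde b_i\tilde b_i^\top - \frac{2}{N^2}(I_n - e_1 e_1^\top)\big)$. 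I bound the operator norm of each and combine by the triangle inequality.

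\emph{Step 3 (bounding the pieces).} Piece (c) is scalar: $\big|\sum_i v_i^2 w_i\big|$ is a sum of independent centered random variables with $\EE w_i^2 = \frac{2(n-1)}{N^2}$ and $\|w_i\|_{\psi_1} \lesssim \frac{\sqrt n}{N}$, so Bernstein gives $\lesssim \|v\|_4^2\frac{\sqrt{n\log(N/\delta)}}{N} + \|v\|_\infty^2\frac{\sqrt n\,\log(N/\delta)}{N}$, dominated by terms of $\eta$. Pieces (a) and (d) have the form $e_1 z^\top + z e_1^\top$ with $z$ supported off the first coordinate, so their operator norm equals $\|z\|$; for (a), $z = \sum_i v_i^3 \tilde b_i \sim \mathcal N\!\big(0,\frac{\|v\|_6^6}{N}(I_n - e_1 e_1^\top)\big)$ yields $\|z\| \lesssim \|v\|_6^3\frac{\sqrt n + \sqrt{\log(N/\delta)}}{\sqrt N}$, and for (d), writing $b_i$ in polar form and conditioning on the radii reduces $\|z\|^2 = \|\sum_i v_i w_i\tilde b_i\|^2$ to $\sum_i v_i^2\|b_i\|^2 w_i^2$ up to a lower-order cross term, to which a scalar Bernstein bound gives $\|z\| \lesssim (\|v\|+1)\frac{n\sqrt{\log(N/\delta)}}{N^{3/2}}$. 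Pieces (b) and (e) are matrix sums handled by matrix Bernstein: first truncate so that $\|b_i\|^2 \le \tilde O(n/N)$ and $|w_i| \le \tilde O(\sqrt n/N)$ uniformly in $i$ (the complementary event has probability $\le \delta$, using $N \ge \log^7(N/\delta)$); on the truncated summands the per-term operator-norm bound and matrix variance proxy are $\|v\|_\infty^2\frac{n+\log(N/\delta)}{N}$ and $\|v\|_4^4\frac{n}{N^2}$ for (b), and $\tilde O\big(\frac{n^{3/2}}{N^2}\big)$ and $\tilde O\big(\frac{n^2}{N^3}\big)$ for (e), which via matrix Bernstein produce exactly the terms $\|v\|_4^2\frac{\sqrt{n\log(N/\delta)}}{N} + \|v\|_\infty^2\frac{n\log(N/\delta)+\log^2(N/\delta)}{N}$ and $\frac{n\sqrt{\log(N/\delta)}}{N^{3/2}} + \frac{(n\log(N/\delta))^{3/2}}{N^2}$, while the truncation bias is of lower order.

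\emph{Main obstacle.} The delicate part is the matrix Bernstein analysis of the degree-six piece (e), and to a lesser extent the degree-four piece (b): the summands are heavy-tailed, so one must truncate at exactly the right polylogarithmic scale, control the resulting bias, and---crucially for the eventual $n\rho \ll \sqrt N$ threshold---evaluate the matrix variance proxy sharply (getting $\tilde O(n^2/N^3)$ for (e), not a bound larger by a factor of $n$), which forces the use of the exact Gaussian fourth- and sixth-moment identities rather than crude operator-norm estimates. Assembling the five bounds, taking a union bound over the $O(1)$ favorable events, and absorbing all constants and polylogarithmic factors into the definition of $\eta$ then completes the proof.
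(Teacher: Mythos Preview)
Your proposal is correct and follows essentially the same approach as the paper: after isolating the deterministic $\frac{1}{N}|\,\|v\|^2-1\,|$ term, both arguments decompose the remaining error into the same five pieces (your (a)--(e) correspond exactly to the paper's Lemmas~4.3, 4.5, 4.2, 4.4, 4.6) and bound them by scalar Bernstein, Gaussian tail, and matrix Bernstein with truncation, with the sharp variance computation for the degree-six piece (e) being the crux in both. The only cosmetic difference is that for piece (d) the paper applies matrix Bernstein directly to the rank-one summands, whereas you outline a polar-decomposition-and-condition argument; both routes lead to the same bound.
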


\noindent
The proof of the above result is deferred to Section~\ref{sec:pf-gauss-spec-norm}. 
Since $\Mt$ defined in \eqref{eq:def-mt} is simply a rotated version of $\Mg$ defined in \eqref{eq:def-mg}, both are approximately rank-one by the above proposition. 
Therefore, the leading eigenvector of $\Mt$ is crucial in estimating the planted vector. 

Furthermore, in view of \eqref{eq:spec-bd}, it is reasonable to expect an $\ell_2$ error bound of order 
$$
\frac{ \eta }{ \left| \|v\|_4^4 - \frac{3}{N} \right|} , 
$$
where $\eta$ and $\left| \|v\|_4^4 - \frac{3}{N} \right|$ can be seen as the noise level and the signal level respectively. 
(Again, the discrepancy between $\|v\|_4^4$ and the corresponding quantity for a Gaussian vector is naturally the signal level.)
To see how the above order of error is related to the order of error \eqref{eq:def-eps} in Theorem~\ref{thm:entrywise}, we state a simple lemma whose proof is deferred to Section~\ref{sec:change-norm}.

\begin{lemma}
\label{lem:change-norm}
Let $c > 0$ and let $v$ be a unit vector $v \in \RR^N$ such that $\left|\|v\|_4^4 - \frac{3}{N} \right| \ge \frac{c}{N}$. Then
\begin{align*}
\frac{ 1 }{ \left| \|v\|_4^4 - \frac{3}{N} \right|} \bigg( \frac{n}{N^{3/2}} 
+ \|v\|_4^2 \frac{\sqrt{n}}{N} 
+ \|v\|_6^3 \sqrt{\frac{n}{N}} 
+&  \|v\|_\infty^2 \frac{n}{N} \bigg) \\
& \le \frac{9+3c}{c} \bigg( \frac{1}{ \|v\|_4^4 } \frac{n}{N^{3/2}} 
+ \frac{\|v\|_\infty}{ \|v\|_4^2 } \sqrt{\frac{n}{N}} \, \bigg) 
\end{align*}
provided that the right-hand side is at most $1$. 
\end{lemma}

\noindent
As a result, if $\|v\| = 1$, $\left| \|v\|_4^4 - \frac{3}{N} \right| \ge \frac{c}{N}$, and $\eta = \eta(N, n, v, N^{-\omega(1)})$, then 
\begin{align}
\frac{ \eta }{ \left| \|v\|_4^4 - \frac{3}{N} \right|} 
&\le \frac{ 1 }{ \left| \|v\|_4^4 - \frac{3}{N} \right|} \cdot \tilde O
\bigg( \frac{n}{N^{3/2}} + \|v\|_4^2 \frac{\sqrt{n}}{N} + \|v\|_6^3 \sqrt{\frac{n}{N}} +  \|v\|_\infty^2 \frac{n}{N} \bigg) \notag \\
&\le \tilde O \left( \frac{1}{ \|v\|_4^4 } \frac{n}{N^{3/2}}  + \frac{\|v\|_\infty}{ \|v\|_4^2 } \sqrt{\frac{n}{N}} \, \right) ,  \label{eq:l2-bd-simple}
\end{align}
which is why we have the bound \eqref{eq:simplified}.

\subsection{Setup and Lemmas for the Orthonormal Case}
\label{sec:pre}

We now establish some notation and preliminary results for estimation of a planted vector from an orthonormal basis. 
Assume that $N \ge C n$ for a sufficiently large constant $C > 0$ throughout this section. 
Fix a unit vector $v \in \RR^N$. 
Let $\yg_i^\top = (v_i \; b_i^\top)$ be the rows of $\Yg$ defined in Model~\ref{mod:orth}, where $b_i$ are i.i.d.\ $\mathcal{N}(0, \frac{1}{N} I_{n-1})$ vectors for $i \in [N]$. 
Let $\tilde b_i \in \RR^n$ be defined by $\tilde b_i^\top = (0 \; b_i^\top)$. 
Let $e_1$ be the first standard basis vector in $\RR^n$. 
Define
\begin{equation}
A := 
\sum_{i=1}^N \yg_i \yg_i^\top , \qquad 
B := \sum_{i=1}^N b_i b_i^\top
\label{eq:def-ab} . 
\end{equation}
Moreover, for $i \in [N]$, define
\begin{equation}
A_{-i} := \sum_{j \in [N] \setminus \{i\}} y_j y_j^\top + v_i^2 e_1 e_1^\top , \qquad 
B_{-i} := \sum_{j \in [N] \setminus \{i\}} b_j b_j^\top .
\label{eq:def-ab-i}
\end{equation}
We write $A_{-i}^{-1} = (A_{-i})^{-1}$ and $B_{-i}^{-1} = (B_{-i})^{-1}$. 
The following results hold, and their proofs can be found in Section~\ref{sec:prelim}.

\begin{lemma}
\label{lem:cov}
It holds with probability $1-N^{-\omega(1)}$ that,
\begin{equation}
\|A - I_n\| = \tilde O \left( \sqrt{ \frac{n}{N} } \, \right) .
\label{eq:a-a-i-norm}
\end{equation}
\end{lemma}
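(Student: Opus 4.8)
The plan is to use the block structure of $A = \sum_{i=1}^N \yg_i \yg_i^\top = \Yg^\top \Yg$ and to show that each block concentrates around the corresponding block of $I_n$. Writing $\yg_i^\top = (v_i\;b_i^\top)$, we have
\[
A = \begin{bmatrix} \|v\|^2 & \big(\textstyle\sum_{i} v_i b_i\big)^\top \\[2pt] \sum_{i} v_i b_i & \sum_{i} b_i b_i^\top \end{bmatrix},
\]
and since $v$ is a unit vector the top-left entry equals $1$ exactly. Splitting $A - I_n$ into its purely off-diagonal part and its bottom-right block and applying the triangle inequality for the operator norm,
\[
\|A - I_n\| \le \Big\|\sum_{i=1}^N v_i b_i\Big\| + \Big\|\sum_{i=1}^N b_i b_i^\top - I_{n-1}\Big\|,
\]
so it suffices to bound each of these two terms by $\tilde O(\sqrt{n/N})$ with probability $1 - N^{-\omega(1)}$.

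For the cross term, $\sum_i v_i b_i \sim \mathcal{N}\big(0, \frac{\|v\|^2}{N} I_{n-1}\big) = \mathcal{N}\big(0, \frac1N I_{n-1}\big)$, so $N\|\sum_i v_i b_i\|^2$ is a chi-square random variable with $n-1$ degrees of freedom; the chi-square tail bound (Lemma~\ref{lem:chi-sq}), applied with $\delta = N^{-\log N}$, gives $\|\sum_i v_i b_i\|^2 \le \frac1N\big(n + C\sqrt n\log N + C\log^2 N\big)$ with probability $1 - N^{-\omega(1)}$, which is $\tilde O(n/N)$ since $1 \le n \le N$. For the bottom-right term, write $b_i = \frac1{\sqrt N} g_i$ with $g_i \sim \mathcal{N}(0, I_{n-1})$ i.i.d., so that $\sum_i b_i b_i^\top = \frac1N G^\top G$ where $G \in \RR^{N\times(n-1)}$ has i.i.d.\ standard Gaussian entries; the standard non-asymptotic bound on the extreme singular values of a Gaussian matrix (see, e.g., \cite{vershynin2018high}) yields, with probability $1 - 2e^{-t^2}$,
\[
\Big\|\frac1N G^\top G - I_{n-1}\Big\| \le C\Big(\sqrt{\frac nN} + \frac nN + \sqrt{\frac{t^2}{N}} + \frac{t^2}{N}\Big).
\]
Taking $t = \log N$ and using the standing assumption $N \ge Cn$ of Section~\ref{sec:pre} (so that $n/N = \tilde O(\sqrt{n/N})$ and $N \ge \log^2 N$), the right-hand side is $\tilde O(\sqrt{n/N})$, with failure probability $2e^{-\log^2 N} = N^{-\omega(1)}$. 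A union bound over the two events then gives $\|A - I_n\| = \tilde O(\sqrt{n/N})$ with probability $1 - N^{-\omega(1)}$, as claimed.

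There is no genuine obstacle here: the argument is a routine concentration estimate, and the only step with any substance is the Gaussian-matrix deviation bound for $\sum_i b_i b_i^\top$. The one place requiring mild care is the bookkeeping needed to absorb lower-order contributions such as $\log^2 N / N$ and $t^2/N$ into the target bound $\tilde O(\sqrt{n/N})$, which relies on $1 \le n \le N$ together with the polylogarithmic slack that $\tilde O$ provides; choosing the deviation parameter $t$ of order $\log N$ (rather than a constant) is what upgrades the failure probability from polynomially small to $N^{-\omega(1)}$.
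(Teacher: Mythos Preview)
Your proof is correct and follows essentially the same approach as the paper: both use the block decomposition $A - I_n = \begin{bmatrix} 0 & w^\top \\ w & B - I_{n-1} \end{bmatrix}$ with $w = \sum_i v_i b_i$, then bound $\|w\|$ via the chi-square tail (Lemma~\ref{lem:chi-sq}) and $\|B - I_{n-1}\|$ via sample-covariance concentration (Lemma~\ref{lem:sum-mat}). Your explicit choice of $\delta = N^{-\log N}$ and $t = \log N$ just spells out what the paper leaves implicit.
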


\begin{lemma}\label{lem:A-exp}
It holds with probability $1-N^{-\omega(1)}$ that, for every $i \in [N]$,
\begin{subequations} \label{eq:eqs-4}
\begin{equation}
e_1^\top A^{-1} \tilde e_1 = \frac{1}{1-n/N} + \tilde O \left( \frac{\sqrt{n}}{N} \right) , 
\end{equation}
\begin{equation} 
e_1^\top A^{-1} \tilde b_i =  - \frac{v_i n}{N - n} + \tilde O \left( \frac{\sqrt{n}}{N} \right)  ,  \label{eq:e-a-b}
\end{equation}
\begin{equation}
\tilde b_i^\top A^{-1} \tilde b_i =  b_i^\top B^{-1} b_i + \frac{v_i^2 n^2}{N(N-n)} + \tilde O \left( |v_i| \frac{n^{3/2}}{N^2} + \frac{n}{N^2} \right) , \label{eq:b-a-b-1}
\end{equation}
\begin{equation}
\yg_i^\top A^{-1} \yg_i = b_i^\top B^{-1} b_i + (1-n/N) v_i^2 + \tilde{O}\left(|v_i| \frac{\sqrt n}{N} + \frac{n}{N^2}\right) . \label{eq:y-a-y-1}
\end{equation}
\end{subequations}
\end{lemma}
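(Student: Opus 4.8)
The plan is to express each of the four identities in \eqref{eq:eqs-4} (call them (a)--(d) in order) through the orthogonal projection $P_{G} := \Yg_{-1} B^{-1} \Yg_{-1}^\top$ onto the column span of the Gaussian part $\Yg_{-1}$, and then to read off everything from Lemma~\ref{lem:rand-proj}. The algebra comes first. Since $v$ is a unit vector, the Gram matrix has the block form $A = \Yg^\top \Yg = \left( \begin{smallmatrix} 1 & w^\top \\ w & B \end{smallmatrix} \right)$ with $w = \sum_i v_i b_i = \Yg_{-1}^\top v$ and $B = \sum_i b_i b_i^\top = \Yg_{-1}^\top \Yg_{-1}$, so its Schur complement is $s := 1 - w^\top B^{-1} w = \|v\|^2 - v^\top P_{G} v = \|(I-P_{G})v\|^2$. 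The block-inverse formula then gives the exact identities $e_1^\top A^{-1} e_1 = s^{-1}$, $\;e_1^\top A^{-1} \tilde b_i = -s^{-1}\, w^\top B^{-1} b_i$, and $\;\tilde b_i^\top A^{-1} \tilde b_i = b_i^\top B^{-1} b_i + s^{-1} (w^\top B^{-1} b_i)^2$; and since $b_i = \Yg_{-1}^\top e_i$ is the $i$th row of $\Yg_{-1}$, we also get $w^\top B^{-1} b_i = v^\top P_{G} e_i = (P_{G}v)_i$ and $b_i^\top B^{-1} b_i = (P_{G})_{ii}$. Finally $\yg_i^\top A^{-1} \yg_i = P_{ii}$ where $P := \Yg A^{-1} \Yg^\top$ projects onto the span of $\Yg$; writing $u := (I - P_{G})v$ (nonzero a.s.) and $P = P_{G} + u u^\top/\|u\|^2$ yields $\yg_i^\top A^{-1} \yg_i = (P_{G})_{ii} + u_i^2/\|u\|^2$ (the same identity also results from expanding $\yg_i = v_i e_1 + \tilde b_i$ in the first three). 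Reading $\tilde e_1$ as $e_1$, identity (a) is just $e_1^\top A^{-1} e_1 = s^{-1}$.

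The only probabilistic ingredient is Lemma~\ref{lem:rand-proj}, applied to the subspace $\mathrm{span}(\Yg_{-1})$, which is a uniformly random $(n-1)$-dimensional subspace independent of the fixed vectors $v$ and $e_1,\dots,e_N$. Taking $(x,y) = (v,v)$ and $(x,y) = (e_i, v)$ for $i \in [N]$ and union bounding over these $N+1$ events (each failing with probability $N^{-\omega(1)}$), we obtain with probability $1 - N^{-\omega(1)}$, simultaneously for all $i$, that $v^\top P_{G} v = \tfrac{n-1}{N} + \tilde O\!\big(\tfrac{\sqrt n}{N}\big)$ and $(P_{G}v)_i = \tfrac{v_i(n-1)}{N} + \tilde O\!\big(\tfrac{\sqrt n}{N}\big)$. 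Hence $\|u\|^2 = s = 1 - \tfrac{n-1}{N} + \tilde O\!\big(\tfrac{\sqrt n}{N}\big)$ is bounded away from $0$ by the standing assumption $N \ge Cn$, so $s^{-1} = \tfrac{1}{1-n/N} + \tilde O\!\big(\tfrac{\sqrt n}{N}\big)$ and $u_i = v_i - (P_{G}v)_i = v_i\big(1 - \tfrac{n-1}{N}\big) + \tilde O\!\big(\tfrac{\sqrt n}{N}\big)$. Substituting gives (a) and (b) at once: for (b), $e_1^\top A^{-1}\tilde b_i = -s^{-1}(P_{G}v)_i = -\tfrac{v_i(n-1)}{N-n} + \tilde O\!\big(\tfrac{\sqrt n}{N}\big) = -\tfrac{v_i n}{N-n} + \tilde O\!\big(\tfrac{\sqrt n}{N}\big)$, the last step absorbing an $O(1/N)$ term. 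Similarly (d) follows from $\yg_i^\top A^{-1}\yg_i = (P_{G})_{ii} + u_i^2/\|u\|^2 = b_i^\top B^{-1} b_i + (1-n/N)v_i^2 + \tilde O\!\big(|v_i|\tfrac{\sqrt n}{N} + \tfrac{n}{N^2}\big)$, where the cross term $2 v_i(1-\tfrac{n-1}{N})\cdot\tilde O(\tfrac{\sqrt n}{N})$ accounts for the $|v_i|\sqrt n/N$ error and division by $\|u\|^2 \asymp 1$ is harmless.

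The delicate identity is (c), whose error bound $\tilde O\!\big(|v_i|n^{3/2}/N^2 + n/N^2\big)$ is far sharper than the $\tilde O(\sqrt n/N)$ of (a) and (b); hence the argument must go directly through $\tilde b_i^\top A^{-1}\tilde b_i = b_i^\top B^{-1} b_i + s^{-1}(P_{G}v)_i^2$ rather than through those estimates. Writing $(P_{G}v)_i = \tfrac{v_i(n-1)}{N} + \theta_i$ with $|\theta_i| \le \tilde O(\tfrac{\sqrt n}{N})$ and squaring, the cross term equals $2\cdot\tfrac{v_i(n-1)}{N}\theta_i = \tilde O\!\big(|v_i| n^{3/2}/N^2\big)$ and the square term equals $\theta_i^2 = \tilde O\!\big(n/N^2\big)$ --- exactly the two error terms in the statement --- while the main term $s^{-1}\cdot v_i^2(n-1)^2/N^2$ equals $v_i^2 n^2/(N(N-n))$ up to $O\!\big(v_i^2 n/N^2\big) \le \tilde O(n/N^2)$ after a short computation, and all remaining products of a bounded or small factor against the error terms stay within budget (using $|v_i| \le 1$ and $|v_i| n/N \le 1$). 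I expect this bookkeeping in (c) to be the main obstacle: it relies on the deviation in Lemma~\ref{lem:rand-proj} being of the sharp order $\sqrt n/N$, and one must watch how this is amplified by squaring $(P_{G}v)_i$ and by the factor $s^{-1}$ without ever reintroducing the coarser $\sqrt n/N$-scale error into the final bound. Note that the whole argument uses only Lemma~\ref{lem:rand-proj} and never the inverse-Wishart trace bound (Lemma~\ref{lem:trace}), which is why the conclusion holds with probability $1 - N^{-\omega(1)}$ and not merely $1-o(1)$.
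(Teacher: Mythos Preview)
Your proposal is correct and follows essentially the same approach as the paper: both use the block-inverse formula for $A$ to express everything in terms of $\Phi = w^\top B^{-1} w = v^\top P_G v$ and $\phi_i = w^\top B^{-1} b_i = (P_G v)_i$, then invoke Lemma~\ref{lem:rand-proj} for the sharp $\tilde O(\sqrt n/N)$ deviations and substitute. Your geometric derivation of (d) via $P = P_G + uu^\top/\|u\|^2$ is a pleasant repackaging of the paper's formula $\yg_i^\top A^{-1}\yg_i = b_i^\top B^{-1} b_i + (\phi_i - v_i)^2/(1-\Phi)$, but the two are literally the same identity since $u_i = v_i - \phi_i$ and $\|u\|^2 = 1-\Phi$; the error bookkeeping for (c) is likewise identical in substance.
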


\begin{lemma}\label{lem:A-j}
It holds with probability $1-N^{-\omega(1)}$ that, for any distinct $i, j \in [N]$,
\begin{subequations} \label{eq:eqs-2}
\begin{equation}
\left| e_1^\top (A^{-1} - A_{-j}^{-1}) e_1 \right| \le \tilde{O}\left( v_j^2 \frac{n}{N} + \frac{\sqrt{n}}{N} \right) , \label{eq:e-a-a-e}
\end{equation}
\begin{align}\label{eq:y-a-a-y}
&\left|y_i^\top (A^{-1} - A_{-j}^{-1}) y_i\right| \\
& \qquad\le \tilde O \left( \frac{n}{N^2} 
+ |v_i| \cdot |v_j| \frac{\sqrt{n}}{N} + \big( |v_i| + |v_j| \big) \frac{n}{N^2} 
+ v_i^2 v_j^2 \frac{n}{N} + v_j^2 \frac{n^2}{N^3} 
+ v_i^2 \frac{\sqrt{n}}{N} \right) . \notag
\end{align}
\end{subequations}
\end{lemma}

\begin{lemma}\label{lem:lev}
It holds with probability $1-o(1)$ that, for every $i \in [N]$,
\begin{subequations} \label{eq:eqs-3}
\begin{equation}
\Big| \yg_i^\top A^{-1} \yg_i - \|\yg_i\|^2 \Big| = \tilde{O}\left(\frac{n}{N^{3/2}} + |v_i| \frac{\sqrt n}{N} + v_i^2 \frac{n}{N}\right) , 
\label{eq:lev}
\end{equation}
\begin{equation}
\Big| \yg_i^\top A^{-1} \yg_i - \frac{n}{N} \Big| = \tilde{O}\left(\frac{\sqrt{n}}{N} + v_i^2 \right) ,  
\label{eq:lev2}
\end{equation}
\begin{equation}
\Big| \tilde b_i^\top A^{-1} \tilde b_i - \frac{n}{N} \Big| = \tilde{O}\left(\frac{\sqrt{n}}{N} + v_i^2 \frac{n^2}{N^2}\right) . 
\label{eq:b-a-b}
\end{equation}
\end{subequations}
\end{lemma}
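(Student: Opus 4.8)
The plan is to obtain all three estimates by combining the exact block-inverse expansions of Lemma~\ref{lem:A-exp} with the approximation $b_i^\top B^{-1} b_i \approx \|b_i\|^2$ from Lemma~\ref{lem:B-exp} and the elementary concentration bound $\big|\|b_i\|^2 - n/N\big| \le \tilde O(\sqrt n / N)$, which holds for all $i \in [N]$ simultaneously with probability $1 - N^{-\omega(1)}$ by a union bound over the chi-square tail estimate (Lemma~\ref{lem:chi-sq}, since $N\|b_i\|^2 \sim \chi^2_{n-1}$). After these substitutions, each claim reduces to checking that a handful of lower-order and cross terms are absorbed into the stated bound, using $N \ge Cn$ and AM--GM.

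For \eqref{eq:lev}, I would start from \eqref{eq:y-a-y-1}, namely $\yg_i^\top A^{-1} \yg_i = b_i^\top B^{-1} b_i + (1-n/N) v_i^2 + \tilde O\big(|v_i| \sqrt n / N + n/N^2\big)$, and subtract $\|\yg_i\|^2 = v_i^2 + \|b_i\|^2$ to get $\yg_i^\top A^{-1} \yg_i - \|\yg_i\|^2 = \big(b_i^\top B^{-1} b_i - \|b_i\|^2\big) - \frac{n}{N} v_i^2 + \tilde O\big(|v_i| \sqrt n / N + n/N^2\big)$. Lemma~\ref{lem:B-exp} controls the first term by $\tilde O(n/N^{3/2})$, and $n/N^2 \le n/N^{3/2}$ absorbs the remaining lower-order term, giving the claim.

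For \eqref{eq:lev2}, I would write $\yg_i^\top A^{-1} \yg_i - \frac{n}{N} = \big(\yg_i^\top A^{-1} \yg_i - \|\yg_i\|^2\big) + \big(v_i^2 + \|b_i\|^2 - \frac{n}{N}\big)$; the first bracket is bounded by \eqref{eq:lev} and the second by $v_i^2 + \tilde O(\sqrt n/N)$ via the concentration of $\|b_i\|^2$. It then suffices to note that $n/N^{3/2} \le \sqrt n/N$, that $|v_i| \sqrt n/N \le \frac12 v_i^2 + \frac12 n/N^2 \le \tilde O(v_i^2 + \sqrt n/N)$ by AM--GM, and that $v_i^2 n/N \le v_i^2$, so everything collapses into $\tilde O(\sqrt n/N + v_i^2)$. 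For \eqref{eq:b-a-b}, I would start from \eqref{eq:b-a-b-1}, subtract $n/N$, and use $b_i^\top B^{-1} b_i = \|b_i\|^2 + \tilde O(n/N^{3/2})$ together with $\|b_i\|^2 = n/N + \tilde O(\sqrt n/N)$; since $N \ge Cn$ gives $N - n \ge N/2$, we have $\frac{v_i^2 n^2}{N(N-n)} \le 2 v_i^2 n^2/N^2$, while $|v_i| n^{3/2}/N^2 = |v_i|\cdot(n/N)\cdot(\sqrt n/N) \le \frac12 v_i^2 n^2/N^2 + \frac12 n/N^2 \le \tilde O(v_i^2 n^2/N^2 + \sqrt n/N)$ by AM--GM, and $n/N^2 \le \sqrt n/N$; altogether this yields $\tilde O(\sqrt n/N + v_i^2 n^2/N^2)$.

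All the invoked results (Lemmas~\ref{lem:A-exp} and~\ref{lem:B-exp}) are already stated uniformly over $i \in [N]$, and the chi-square concentration can be made uniform with only polylogarithmic loss, so intersecting the relevant events preserves probability $1 - o(1)$; the $o(1)$ rather than $N^{-\omega(1)}$ is inherited from Lemma~\ref{lem:B-exp} (and ultimately from the Chebyshev step in Lemma~\ref{lem:trace}). I do not anticipate a genuine obstacle here: this is a ``gluing'' lemma, and the only care required is verifying the absorptions of the cross and lower-order terms, all of which are routine consequences of $N \ge Cn$ and AM--GM.
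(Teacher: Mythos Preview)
Your proposal is correct and follows essentially the same route as the paper: combine the block-inverse expansions from Lemma~\ref{lem:A-exp} with Lemma~\ref{lem:B-exp} and the chi-square concentration of $\|b_i\|^2$, then absorb the cross terms. Your explicit AM--GM justifications for the absorptions are slightly more detailed than what the paper writes, but the argument is the same.
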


\subsection{Proof of Theorem~\ref{thm:spectral}}
\label{sec:pf-spectral}

Throughout the proof, we condition on the event of probability $1 - o(1)$ that the bounds \eqref{eq:spec-bd}, \eqref{eq:a-a-i-norm}, \eqref{eq:eqs-4}, \eqref{eq:eqs-2}, and \eqref{eq:eqs-3} all hold. 

\subsubsection*{Change of basis} 
Let $A$ and $B$ be the matrices defined in \eqref{eq:def-ab}. 
Define 
\begin{equation*}
\yb_i = A^{-1/2} \yg_i \quad \text{ for } i \in [N] , 
\end{equation*}
and let $\Yb$ be the $N \times n$ matrix whose $i$th row is $\yb_i^\top$, that is, $\Yb = \Yg A^{-1/2}$. 
Note that 
$$
\Yb^\top \Yb = A^{-1/2} \Yg^\top \Yg A^{-1/2}
= A^{-1/2} A A^{-1/2} = I_n ,
$$
so the columns of $\Yb$ form an orthonormal basis of the column span of $\Yg$. 
The same is true for the columns of $\Yh$ observed in Model~\ref{mod:orth}, so there exists an orthogonal matrix $Q \in \RR^{n \times n}$ such that $\Yh Q = \Yb $, that is, 
\begin{equation*}
Q^\top \yh_i = \yb_i \quad \text{ for } i \in [N] .
\end{equation*}

Let us define  
an auxiliary matrix
\begin{equation*}
M' := A^{1/2} Q^\top \Mh Q A^{1/2} . 
\end{equation*}
By the definition of $\Mh$ in \eqref{eq:def-mh} and the relation $A^{1/2} Q^\top \yh_i = A^{1/2} \yb_i = \yg_i$, we obtain 
\begin{equation}
M' = \sum_{i=1}^N \left( \yg_i^\top A^{-1} \yg_i - \frac{n-1}{N} \right) \yg_i \yg_i^\top - \frac{3}{N} A . 
\label{eq:formula-m-p}
\end{equation}
Let $\ub'$ be the leading eigenvector of $M'$ so that 
$\hat{\ub} = \frac{Q A^{1/2} \ub'}{ \| Q A^{1/2} \ub' \| }$. 
Then we have 
\begin{equation*}
\Yh \hat{\ub} 
=  \left(\Yh Q A^{1/2}\right) \left(A^{-1/2} Q^\top \hat{\ub}\right) 
= \frac{ Y \ub'}{ \| Q A^{1/2} \ub' \| } . 
\end{equation*}
It follows from \eqref{eq:a-a-i-norm} that $\frac{1}{\| Q A^{1/2} \ub' \|} = 1 + \tilde O \big( \sqrt{\frac{n}{N}} \, \big)$ and therefore 
\begin{equation}
\hat y_j^\top \hat{\ub} = y_j^\top \ub' \left(1 + \tilde O \bigg( \sqrt{\frac{n}{N}} \, \bigg) \right) .  \label{eq:hat-prime-y}
\end{equation}
It remains to analyze $y_j^\top \ub'$. 

\subsubsection*{Setting up the leave-one-out analysis}
To study $y_j^\top \ub'$, we employ a leave-one-out analysis.  
In addition to the matrices $M$ and $M'$, we will define auxiliary matrices $F^{(j)}$, $G^{(j)}$, and $H^{(j)}$; as we will show, all the five matrices are close to $(\|v\|_4^4 - \frac{3}{N}) e_1 e_1^\top$. 

Recall that $y_j^\top = (v_j \; b_j^\top)$ where $b_j \sim \mathcal{N}(0, \frac{1}{N} I_{n-1})$. As before, let $\tilde b_j \in \RR^n$ be the vector defined by $\tilde b_j^\top = (0 \;\; b_j^\top)$. 
Let $A_{-j}$ be defined by \eqref{eq:def-ab-i}. 
Define matrices 
\begin{equation}
F^{(j)} := \sum_{i \in [N] \setminus \{j\}} \left( y_i^\top A_{-j}^{-1} y_i - \frac{n+2}{N} \right) y_i y_i^\top  + v_j^4 \left( e_1^\top A_{-j}^{-1} e_1  \right) e_1 e_1^\top ,
\label{eq:def-fj}
\end{equation}
\begin{equation}
G^{(j)} 
:= F^{(j)} + \left( y_j^\top A^{-1} y_j - \frac{n+2}{N} \right) \tilde b_j \tilde b_j^\top , \label{eq:def-gj}
\end{equation}
and
\begin{equation}
H^{(j)} := \sum_{i \in [N] \setminus \{j\}} \left( y_i^\top A_{-j}^{-1} y_i - \frac{n+2}{N} \right) y_i y_i^\top  
+ \left( y_j^\top A^{-1} y_j - \frac{n+2}{N} \right) y_j y_j^\top . 
\label{eq:def-hj}
\end{equation}
Since
\begin{align*}
&\left( y_j^\top A^{-1} y_j - \frac{n+2}{N} \right) y_j y_j^\top \\
&\qquad= v_j^4 \left( e_1^\top A^{-1} e_1 \right) e_1 e_1^\top 
+ v_j^2 \left( 2 v_j \cdot e_1^\top A^{-1} \tilde b_j + \tilde b_j^\top A^{-1} \tilde b_j - \frac{n+2}{N} \right) e_1 e_1^\top \\
& \qquad\quad \   + v_j \left( y_j^\top A^{-1} y_j - \frac{n+2}{N} \right) (e_1 \tilde b_j^\top + \tilde b_j e_1^\top) 
+ \left( y_j^\top A^{-1} y_j - \frac{n+2}{N} \right) \tilde b_j \tilde b_j^\top , 
\end{align*}
it is not hard to see that 
\begin{align}
H^{(j)} - G^{(j)}& \notag \\
&\hspace{-15pt} = v_j^4 \cdot e_1^\top \left(A^{-1} - A_{-j}^{-1}\right) e_1 \cdot e_1 e_1^\top 
+ v_j^2 \left( 2 v_j \cdot e_1^\top A^{-1} \tilde b_j + \tilde b_j^\top A^{-1} \tilde b_j - \frac{n+2}{N} \right) e_1 e_1^\top \notag \\
&\quad \hspace{-15pt}
+ v_j \left( y_j^\top A^{-1} y_j - \frac{n+2}{N} \right) (e_1 \tilde b_j^\top + \tilde b_j e_1^\top) . 
\label{eq:h-g-diff}
\end{align}
Moreover, in view of \eqref{eq:formula-m-p}, \eqref{eq:def-ab}, and \eqref{eq:def-hj}, we have 
\begin{equation}
M' - H^{(j)} = \sum_{i \in [N] \setminus \{j\}} y_i^\top (A^{-1} - A_{-j}^{-1}) y_i \cdot y_i y_i^\top . 
\label{eq:m-h-diff}
\end{equation}
By \eqref{eq:def-mg} and \eqref{eq:formula-m-p}, we also have 
\begin{equation}
M' - \Mg 
= \sum_{i=1}^N \left( \yg_i^\top A^{-1} \yg_i - \| \yg_i \|^2 \right) \yg_i \yg_i^\top - \frac{3}{N} (A - I_n) ,
\label{eq:diff}
\end{equation}

Let $\fb$ and $\gb$ be the leading eigenvectors of $F^{(j)}$ and $G^{(j)}$ respectively, where we suppress the dependency of $\fb$ and $\gb$ on $j$ for brevity. 
By virtue of the definition of $F^{(j)}$ in \eqref{eq:def-fj}, $y_j$ is independent from $F^{(j)}$ and thus from $\fb$. This independence allows us to study $y_j^\top \fb$. 
Then, to study $y_j^\top \ub'$, we write
\begin{equation*}
y_j^\top \ub' = y_j^\top \fb + y_j^\top (\gb - \fb) + y_j^\top (\ub' - \gb) 
\end{equation*}
so that 
\begin{equation}
\big| y_j^\top \ub' - v_j \big| \le \big| y_j^\top \fb - v_j \big| + \| y_j \| \cdot \big\| \gb - \fb \big\| + \| y_j \| \cdot \big\| \ub' - \gb \big\| . 
\label{eq:main-ineq}
\end{equation}
In the sequel, we bound the terms on the right-hand side. 

\subsubsection*{Spectral norm bounds}
We now show that the matrices $F^{(j)}, G^{(j)}, H^{(j)}, M',$ and $M$ are all close to each other in spectral norm. 
By \eqref{eq:def-gj}, \eqref{eq:lev2}, and $\|b_j\|^2 = \tilde O(n/N)$, we have 
\begin{equation}
\|G^{(j)} - F^{(j)}\| \le \left| y_j^\top A^{-1} y_j - \frac{n+2}{N} \right| \|b_j\|^2
\le \tilde{O}\left(\frac{n^{3/2}}{N^2} + v_j^2 \frac{n}{N}\right) . 
\label{eq:f-g-norm}
\end{equation}
Next, it follows from \eqref{eq:h-g-diff} that 
\begin{align*}
\|H^{(j)} - G^{(j)}\|
&\le v_j^4 \cdot \left|e_1^\top \left(A^{-1} - A_{-j}^{-1}\right) e_1\right| 
+ 2 |v_j|^3 \cdot \left|e_1^\top A^{-1} \tilde b_j\right| + v_j^2 \cdot \left|\tilde b_j^\top A^{-1} \tilde b_j - \frac{n+2}{N} \right| \\
& \quad \  
+ 2 |v_j| \cdot \left| y_j^\top A^{-1} y_j - \frac{n+2}{N} \right| \cdot \|b_j\| . 
\end{align*}
To bound the terms on the right-hand side, we apply \eqref{eq:e-a-a-e}, \eqref{eq:e-a-b}, \eqref{eq:b-a-b}, and \eqref{eq:lev2} to obtain 
\begin{align}
\|H^{(j)} - G^{(j)}\| & \notag \\
&\hspace{-40pt}\le \tilde{O}\left( v_j^6 \frac{n}{N} + v_j^4 \frac{\sqrt{n}}{N} 
+ v_j^4 \frac{n}{N} + |v_j|^3 \frac{\sqrt{n}}{N} 
+ v_j^2 \frac{\sqrt{n}}{N} + v_j^4 \frac{n^2}{N^2} 
+ |v_j| \frac{n}{N^{3/2}} + |v_j|^3 \frac{\sqrt{n}}{\sqrt{N}} \right) \notag \\
&\hspace{-40pt}\le \tilde{O}\left( v_j^4 \frac{n}{N} 
+ |v_j|^3 \frac{\sqrt{n}}{\sqrt{N}}
+ v_j^2 \frac{\sqrt{n}}{N}
+ |v_j| \frac{n}{N^{3/2}} \right) . \label{eq:h-g-norm}
\end{align}
Moreover, by \eqref{eq:m-h-diff}, \eqref{eq:y-a-a-y}, and \eqref{eq:a-a-i-norm}, we have 
\begin{align*}
\|M' - H^{(j)}\| 
&\le \bigg\| \sum_{i \in [N] \setminus \{j\}} \left| y_i^\top (A^{-1} - A_{-j}^{-1}) y_i\right| \cdot y_i y_i^\top \bigg\| \\
&\le \tilde O \left( \frac{n}{N^2} 
+ |v_j| \cdot \|v\|_\infty \frac{\sqrt{n}}{N} + \|v\|_\infty \frac{n}{N^2} 
+ v_j^2 \|v\|_\infty^2 \frac{n}{N} + \|v\|_\infty^2 \frac{\sqrt{n}}{N} \right) \\
&\qquad \times \bigg\| \sum_{i \in [N] \setminus \{j\}} y_i y_i^\top \bigg\| .
\end{align*}
Since $\big\| \sum_{i \in [N] \setminus \{j\}} y_i y_i^\top \big\| \le \|A\| + \|y_j\|^2 \le \tilde O(1) + v_j^2 + \|b_j\|^2 = \tilde O(1)$, it follows that 
\begin{equation}
\|M' - H^{(j)}\| \le \tilde O \left( \frac{n}{N^2} 
+ |v_j| \cdot \|v\|_\infty \frac{\sqrt{n}}{N} + \|v\|_\infty \frac{n}{N^2} 
+ v_j^2 \|v\|_\infty^2 \frac{n}{N} + \|v\|_\infty^2 \frac{\sqrt{n}}{N} \right) . \label{eq:mp-h-norm}
\end{equation}
Finally, in view of \eqref{eq:diff}, \eqref{eq:lev}, and \eqref{eq:a-a-i-norm}, it holds that 
\begin{align}
\left\| M' - \Mg \right\| &\le 
\left\| \sum_{i=1}^N \left| \yg_i^\top A^{-1} \yg_i - \| \yg_i \|^2 \right| \cdot \yg_i \yg_i^\top \right\| + \frac{3}{N} \left\| A - I_n \right\| \notag \\
& \le 
\tilde{O}\left(\frac{n}{N^{3/2}} + \|v\|_\infty \frac{\sqrt n}{N} + \|v\|_\infty^2 \frac{n}{N}\right)  \cdot \left\| \sum_{i=1}^N \yg_i \yg_i^\top \right\| + \frac{1}{N} \cdot \tilde O \left( \sqrt{ \frac{n}{N} } \, \right) \notag \\
&\le \tilde{O}\left(\frac{n}{N^{3/2}} + \|v\|_\infty \frac{\sqrt n}{N} + \|v\|_\infty^2 \frac{n}{N}\right) . \label{eq:b23} 
\end{align}

It is not hard to see that the bounds \eqref{eq:f-g-norm}, \eqref{eq:h-g-norm}, \eqref{eq:mp-h-norm}, and \eqref{eq:b23} sum to 
\begin{equation*}
\tilde{O}\left( \|v\|_\infty^3 \frac{\sqrt{n}}{\sqrt{N}}
+ \|v\|_\infty \frac{\sqrt n}{N} 
+ \frac{n}{N^{3/2}} + \|v\|_\infty^2 \frac{n}{N} \right) 
\le \tilde O\left( \kappa \right) , 
\end{equation*}
where
\begin{equation}
\kappa := \frac{n}{N^{3/2}} 
+ \|v\|_4^2 \frac{\sqrt{n}}{N} 
+ \|v\|_6^3 \sqrt{\frac{n}{N}} +  \|v\|_\infty^2 \frac{n}{N} + \|v\|_\infty \frac{\sqrt n}{N} .
\label{eq:def-kappa}
\end{equation}
In addition, recall that $\left\| M - \left( \|v\|_4^4 - \frac{3}{N} \right) e_1 e_1^\top \right\| \le \eta \le \tilde O(\kappa)$ by \eqref{eq:spec-bd}. 
Furthermore, by Lemma~\ref{lem:change-norm} and assumption \eqref{eq:spec-assume}, we have 
\begin{align}
\frac{\kappa}{\left| \|v\|_4^4 - \frac{3}{N} \right|} 
&\le O \left( \frac{1}{ \|v\|_4^4 } \frac{n}{N^{3/2}}  + \frac{\|v\|_\infty}{ \|v\|_4^2 } \sqrt{\frac{n}{N}} 
+  \frac{\|v\|_\infty}{ \|v\|_4^4 } \frac{\sqrt n}{N} \right) \nonumber \\
&\le O \left( \frac{1}{ \|v\|_4^4 } \frac{n}{N^{3/2}}  + \frac{\|v\|_\infty}{ \|v\|_4^2 } \sqrt{\frac{n}{N}} \right) \ll 1, 
\label{eq:kappa-l4-ratio}
\end{align}
where we have used the fact $\|v\|_4^4 \ge 1/N$ (since $v$ is a unit vector) to bound the third term by the second. Therefore, the auxiliary matrices $F^{(j)}, G^{(j)}, H^{(j)}$, and $M'$ are all close to $\left( \|v\|_4^4 - \frac{3}{N} \right) e_1 e_1^\top$. 
In particular, the gap between the two largest singular values of each of these matrices is larger than $0.9 \left| \|v\|_4^4 - \frac{3}{N} \right|$, and the eigenvectors $\fb$, $\gb$, $\ub'$, and $\ub$ (of $F^{(j)}, G^{(j)}, M'$, and $M$ respectively) can be chosen to be close to $e_1$ so that there is no sign ambiguity. 

\subsubsection*{Eigenvector perturbation}
More precisely, by the Davis--Kahan theorem (Lemma~\ref{lem:dk}), we have 
\begin{equation}
\|\fb - e_1\| 
\le 6 \frac{\|F^{(j)} - M\|}{\left| \|v\|_4^4 - \frac{3}{N} \right|} 
\le \frac{\tilde O\left(\kappa\right)}{\left| \|v\|_4^4 - \frac{3}{N} \right|}
\label{eq:f-e1}
\end{equation}
and, in view of \eqref{eq:mp-h-norm} and \eqref{eq:h-g-norm},  
\begin{align}
\|\ub' - \gb\| &\le 6 \frac{\|M' - H^{(j)}\| + \|H^{(j)} - G^{(j)}\|}{\left| \|v\|_4^4 - \frac{3}{N} \right|} \notag \\
&\le \frac{1}{\left| \|v\|_4^4 - \frac{3}{N} \right|}  \times  \label{eq:xip-g} \\
&\hspace{-40pt} \tilde{O} \bigg( \frac{n}{N^2} 
+ |v_j| \cdot \|v\|_\infty \frac{\sqrt{n}}{N} + \|v\|_\infty \frac{n}{N^2} 
+ v_j^2 \|v\|_\infty^2 \frac{n}{N} 
+ \|v\|_\infty^2 \frac{\sqrt{n}}{N}
+ |v_j|^3 \frac{\sqrt{n}}{\sqrt{N}}
+ |v_j| \frac{n}{N^{3/2}} \bigg) . \notag
\end{align}

Furthermore, since $b_j$ is independent from $F^{(j)}$ and thus from $\fb$, we have $\tilde b_j^\top \fb \sim \mathcal{N}(0, \|\fb_{-1}\|^2/N)$ conditional on $F^{(j)}$, where $\fb_{-1}$ denotes the subvector of $\fb$ with its first entry removed. 
Consequently, by \eqref{eq:f-e1}, 
\begin{equation}
\left| \tilde b_j^\top \fb \right| \le \tilde O \left( \frac{\|\fb_{-1}\|}{\sqrt{N}} \right) 
\le \frac{\tilde O\left(\kappa\right)}{\sqrt{N} \left| \|v\|_4^4 - \frac{3}{N} \right|}
\le \tilde O\left(\frac{1}{\sqrt{N}}\right)
\label{eq:b-f-ip}
\end{equation}
with conditional probability $1-N^{-\omega(1)}$. 
Let us further condition on \eqref{eq:b-f-ip}. 
Then, by \eqref{eq:def-gj}, Lemma~\ref{lem:rank-one-perturb}, and \eqref{eq:lev2}, we obtain 
\begin{align}
\|\gb - \fb\| &\le \frac{ 6 }{\left| \|v\|_4^4 - \frac{3}{N} \right|} \cdot  \left| y_j^\top A^{-1} y_j - \frac{n+2}{N} \right| \cdot \|b_j\| \cdot \left| \tilde b_j^\top \fb \right| \notag \\
&\le \frac{1}{\left| \|v\|_4^4 - \frac{3}{N} \right|} \cdot \tilde O\left( \left( \frac{\sqrt{n}}{N} + v_j^2 \right) \cdot \sqrt{\frac{n}{N}} \cdot \frac{1}{\sqrt{N}} \right) \notag \\
&\le \frac{1}{\left| \|v\|_4^4 - \frac{3}{N} \right|} \cdot \tilde O\left( \frac{n}{N^2} + v_j^2 \frac{\sqrt{n}}{N} \right) .  \label{eq:g-f}
\end{align}

\subsubsection*{Finishing the proof} 
Finally, it follows from \eqref{eq:f-e1} and \eqref{eq:b-f-ip} that 
$$
\left| y_j^\top \fb - v_j \right| 
\le |v_j| (1 - \fb_1) + \left| \tilde b_j^\top \fb \right| \le \frac{\tilde O\left(\kappa\right)}{\left| \|v\|_4^4 - \frac{3}{N} \right|} \left( |v_j| + \frac{1}{\sqrt{N}} \right) . 
$$
Plugging this together with \eqref{eq:xip-g}, \eqref{eq:g-f}, and $\|y_j\|^2 = v_j^2 + \tilde O(n/N)$ into \eqref{eq:main-ineq}, we get 
\begin{align*}
\big| y_j^\top \ub' - v_j \big| &
\le \frac{\tilde O\left(\kappa\right)}{\left| \|v\|_4^4 - \frac{3}{N} \right|} \left( |v_j| + \frac{1}{\sqrt{N}} \right) 
+ \tilde O \left( v_j^2 + \frac{n}{N} \right) \cdot \frac{1}{\left| \|v\|_4^4 - \frac{3}{N} \right|} \times \\
&\hspace{-50pt} \tilde{O} \bigg( \frac{n}{N^2} 
+ |v_j| \cdot \|v\|_\infty \frac{\sqrt{n}}{N} + \|v\|_\infty \frac{n}{N^2} 
+ v_j^2 \|v\|_\infty^2 \frac{n}{N} 
+ \|v\|_\infty^2 \frac{\sqrt{n}}{N}
+ |v_j|^3 \frac{\sqrt{n}}{\sqrt{N}}
+ |v_j| \frac{n}{N^{3/2}} \bigg) \\
&\le \frac{\tilde O\left(\kappa\right)}{\left| \|v\|_4^4 - \frac{3}{N} \right|} \left( |v_j| + \frac{1}{\sqrt{N}} \right)  .
\end{align*}
This combined with \eqref{eq:hat-prime-y} yields 
\begin{align*}
\big| \hat y_j^\top \hat{\ub} - v_j \big| 
&\le \big| y_j^\top \ub' - v_j \big| + \tilde O\left( \big| y_j^\top \ub' \big| \sqrt{\frac{n}{N}} \, \right) \\
&\le \frac{\tilde O\left(\kappa\right)}{\left| \|v\|_4^4 - \frac{3}{N} \right|} \left( |v_j| + \frac{1}{\sqrt{N}} \right) 
+ \tilde O\left( |v_j| \sqrt{\frac{n}{N}} \, \right) , 
\end{align*}
which completes the proof in view of \eqref{eq:kappa-l4-ratio}, along with the fact $\|v\|_\infty = \|v\|_\infty \cdot \|v\| \ge \|v\|_4^2$.

\section{Proofs of Lower Bounds}
\label{sec:proof-lower}

We prove Theorems~\ref{thm:sp-lower-bd} and~\ref{thm:lower} in this section and defer the proofs of Theorems~\ref{thm:reduction} and~\ref{thm:detect-upper} to the appendix.

\subsection{Proof of Theorem~\ref{thm:sp-lower-bd}}

Assume on the contrary that for some $\ell \ge 1$, $d \ge 1$, $\epsilon \in (0,1)$, such an $M$ exists. First note that the assumption $\PP_{\cP}\left\{\|M\| \ge (1+\epsilon)t\right\} \ge 1 - \frac{\epsilon}{4}$ implies $\EE_{\cP} \|M\| \ge (1+\epsilon/2)t$. By Theorem~4.4 of~\cite{kunisky2019notes}, if $k \in \mathbb{N}$ satisfies $N^{-C} \le \frac{1}{2} \cdot 3^{-4kd}$ then $\Adv_{\le 2kd} \ge N^{-\ell}(1+\epsilon/2)^{2k}$ (the requirement in~\cite{kunisky2019notes} that $\mathcal{P}$ be absolutely continuous with respect to $\mathcal{Q}$ is not actually needed). By Theorem~\ref{thm:lower}, we have a universal constant $C_2 > 0$ such that if $n \rho \ge \sqrt{N} (\log N)^{C_2}$ and $2kd \le (\log N)^2$, then $\Adv_{\le 2kd} \le 2$. Seeking a contradiction, choose
\[ k = \left\lfloor\frac{\ell \log N + \log 2}{2 \log(1+\epsilon/2)}\right\rfloor + 1 \]
so that $N^{-\ell} (1+\epsilon/2)^{2k} > 2$. Since $k = O(\log N)$, we can choose $N_0$ large enough so that $2kd \le (\log N)^2$ for all $N \ge N_0$. Finally, using again that $k = O(\log N)$, we can choose $C$ large enough so that $N^{-C} \le \frac{1}{2} \cdot 3^{-4kd}$ for all $N \ge N_0$.

\subsection{Setup and Lemmas for Low-Degree Lower Bounds}
\label{sec:low-deg-pf}

We begin by restating Problem~\ref{prob:detect} in an equivalent form based on Model~\ref{mod:gauss-alt}. 
Recall that in Model~\ref{mod:gauss-alt}, we observe i.i.d.\ samples $\tilde y_i \in \RR^n$, each of which has a non-Gaussian component along a planted direction $u$ and is otherwise Gaussian.

\begin{problem}
\label{prob:alt}
Let $\mathcal{U}$ be a distribution over the unit sphere in $\RR^n$, and let $\nu$ be a distribution over $\RR$.
Define the following null and planted distributions: 
\begin{itemize}
\item 
Under $\cQ$, observe i.i.d.\ samples $\yt_1,\ldots,\yt_N \sim \mathcal{N}(0,I_n)$.

\item 
Under $\cP$, first draw $u \sim \mathcal{U}$ and i.i.d.\ $x_1,\ldots,x_N \sim \nu$. Conditional on $u$ and $\{x_i\}$, draw independent samples $\yt_1,\ldots,\yt_N \in \RR^n$ where $\yt_i \sim \mathcal{N}(x_i u, I_n - uu^\top)$.
\end{itemize}
Suppose that we observe the matrix $\tilde Y \in \RR^{N \times n}$ with rows $\yt_1^\top, \dots, \yt_N^\top$. Our goal is to test between the hypotheses $\cQ$ and $\cP$. 
\end{problem}

\noindent In other words, in the planted distribution $\cP$, the samples (conditional on $u$) have distribution $\nu$ in the direction $u$ (in the sense that $\langle \yt_i, u \rangle \sim \nu$) but are Gaussian in all other directions (in the sense that $\langle \yt_i, w \rangle \sim \mathcal{N}(0, 1)$ for any unit vector $w \perp u$). 
Next, we will show that this problem is equivalent to our original detection problem if $\mathcal{U}$ is uniform on the sphere and $\nu = \BG(1, \rho)$. 
Recall that by Definition~\ref{def:bg}, a Bernoulli--Gaussian variable $x \sim \BG(1, \rho)$ is defined by
\begin{equation}
\begin{cases}
x = 0 & \text{ with probability } 1 - \rho , \\
x \sim \mathcal{N}(0, 1/\rho) & \text{ with probability } \rho. 
\end{cases}
\label{eq:def-x}
\end{equation}

\begin{lemma}
\label{lem:equiv-detect}
When $\mathcal{U}$ is the uniform distribution over the unit sphere in $\RR^n$ and $\nu$ is $\BG(1, \rho)$ as defined in \eqref{eq:def-x}, Problem~\ref{prob:alt} is equivalent to Problem~\ref{prob:detect} up to a rescaling of $\tilde Y$. 
In particular, the quantity $\Adv_{\le D}$ defined in \eqref{eq:adv} is the same for the two problems. 
\end{lemma}

\begin{proof}
We claim that, under either $\cQ$ or $\cP$, the observation $\tilde Y$ in Problem~\ref{prob:alt} is equal in distribution to $\sqrt{N}$ times the observation in Problem~\ref{prob:detect}, and thus the two detection problems are equivalent. 
This claim is trivial under $\cQ$. Now consider $\tilde Y = YQ$ drawn from $\cP$ in Problem~\ref{prob:detect}. Let $u^\top$ be the first row of $Q$, which is a uniformly random unit vector. Conditional on $Q$ and $v \sim \BG(N, \rho)$, the rows $\yt_i^\top$ of $\tilde Y$ are independently distributed as $\yt_i \sim \mathcal{N}(v_i u, \frac{1}{N}(I_n - uu^\top))$. Also note that $v$ has i.i.d.\ entries distributed as $\frac{1}{\sqrt N} \nu = \frac{1}{\sqrt N} \BG(1, \rho)$. As a result, this matrix $\tilde Y$ is equal in distribution to $\frac{1}{\sqrt N}$ times the matrix $\tilde Y$ in Problem~\ref{prob:alt} as desired.
Finally, the quantity $\Adv_{\le D}$ defined in \eqref{eq:adv} is invariant under a rescaling, so $\Adv_{\le D}$ is the same for the two problems.   
\end{proof}

We will make use of the \emph{Hermite polynomials} (see e.g., \cite{orthog-poly}), which are orthogonal polynomials for the Gaussian distribution. 
Let $\NN := \{0,1,2,\ldots\}$. 
Let $\{h_k\}_{k \in \NN}$ denote the Hermite polynomials, normalized so that 
$$\EE_{z \sim \mathcal{N}(0,1)}[h_k(z)h_\ell(z)] = \One_{k=\ell} .$$
For example, the first few Hermite polynomials are
\[ h_0(z) = 1, \quad h_1(z) = z, \quad   h_2(z) = \frac{1}{\sqrt{2}} (z^2 - 1), \]
\[ h_3(z) = \frac{1}{\sqrt{3!}} ( z^3 - 3z) , \quad  h_4(z) = \frac{1}{\sqrt{4!}}  (z^4 - 6z^2+3 ) . \]
The following lemma consists of standard facts about Hermite polynomials.

\begin{lemma}\label{lem:hermite-facts}
The Hermite polynomials $\{h_k\}$ satisfy
\begin{itemize}
    \item $\displaystyle h_k(x+z) = \sum_{\ell=0}^k \sqrt{\frac{\ell!}{k!}} \binom{k}{\ell} x^{k-\ell} h_\ell(z)$, and
    \item for $x \ge -1$, $\displaystyle \EE_{y \sim \mathcal{N}(0,1+x)} \left[ h_k(y) \right] = \begin{cases} (k-1)!! (k!)^{-1/2} x^{k/2} & k \text{ even,} \\ 0 & k \text{ odd.} \end{cases}$
\end{itemize}
\end{lemma}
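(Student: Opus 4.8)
The plan is to reduce both identities to the exponential generating function for the Hermite polynomials. Writing $He_k := \sqrt{k!}\, h_k$ for the unnormalized (``probabilists' '') Hermite polynomials, one has the standard identity
\[
\sum_{k \ge 0} \frac{t^k}{k!}\, He_k(z) = \exp\!\Big(tz - \tfrac{t^2}{2}\Big),
\]
which can be quoted from any reference on orthogonal polynomials (e.g.\ \cite{orthog-poly}) or verified directly from the normalization $\EE_{z\sim\mathcal N(0,1)}[h_k h_\ell] = \One_{k=\ell}$. Everything else is bookkeeping with this identity together with the translation $h_m = He_m/\sqrt{m!}$ between the two normalizations.

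For the first identity I would substitute $x+z$ for $z$ and factor the exponential:
\[
\sum_{k\ge 0} \frac{t^k}{k!}\, He_k(x+z) = e^{t(x+z) - t^2/2} = e^{tx}\cdot e^{tz - t^2/2} = \Big(\sum_{j\ge 0} \frac{t^j x^j}{j!}\Big)\Big(\sum_{\ell\ge 0}\frac{t^\ell}{\ell!} He_\ell(z)\Big).
\]
Comparing the coefficient of $t^k$ on the two sides gives $He_k(x+z) = \sum_{\ell=0}^k \binom{k}{\ell} x^{k-\ell} He_\ell(z)$; substituting $He_m = \sqrt{m!}\, h_m$ throughout and dividing by $\sqrt{k!}$ produces the claimed formula with the factor $\sqrt{\ell!/k!}\,\binom{k}{\ell}$.

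For the second identity I would take the expectation of the generating function under $y\sim\mathcal N(0,1+x)$, which is a genuine Gaussian precisely when $1+x\ge 0$, i.e.\ $x \ge -1$:
\[
\sum_{k\ge 0}\frac{t^k}{k!}\,\EE_{y\sim\mathcal N(0,1+x)}[He_k(y)] = e^{-t^2/2}\,\EE_{y\sim\mathcal N(0,1+x)}[e^{ty}] = e^{-t^2/2}\, e^{(1+x)t^2/2} = e^{x t^2/2} = \sum_{m\ge 0}\frac{x^m}{2^m m!}\,t^{2m}.
\]
Matching coefficients of $t^k$ yields $\EE[He_k(y)] = 0$ for $k$ odd, and for $k = 2m$ it yields $\EE[He_{2m}(y)] = \frac{(2m)!}{2^m m!}\,x^m = (2m-1)!!\, x^m$. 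Dividing by $\sqrt{k!}$ to pass from $He_k$ back to $h_k$ gives $\EE[h_k(y)] = (k-1)!!\,(k!)^{-1/2} x^{k/2}$ for even $k$, as stated.

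There is no real obstacle here; the only points needing a little care are (i) confirming that $h_k = He_k/\sqrt{k!}$ is indeed the normalization matching the paper's convention $\EE[h_k h_\ell] = \One_{k=\ell}$, which can be checked against the listed low-order polynomials $h_2(z)=\tfrac1{\sqrt2}(z^2-1)$ and $h_4(z)=\tfrac1{\sqrt{4!}}(z^4-6z^2+3)$, and (ii) using the elementary identity $(2m)!/(2^m m!) = (2m-1)!!$ to recast the coefficient extraction into the stated double-factorial form. The remaining manipulations are formal operations on power series that converge absolutely and locally uniformly in $t$, so the coefficient comparisons are justified.
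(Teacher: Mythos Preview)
Your proof is correct. The paper does not actually prove this lemma: it is introduced with the sentence ``The following lemma consists of standard facts about Hermite polynomials'' and stated without proof, with an implicit reference to \cite{orthog-poly}. So there is no paper proof to compare against; you have supplied a clean generating-function derivation where the authors simply cite the result as known. Both identities follow exactly as you describe from $\sum_{k\ge 0} \frac{t^k}{k!} He_k(z) = e^{tz - t^2/2}$, and your handling of the normalization $h_k = He_k/\sqrt{k!}$ and the double-factorial rewrite $(2m)!/(2^m m!) = (2m-1)!!$ is accurate.
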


\noindent We will also make use of the multivariate Hermite polynomials $H_\alpha: \RR^{N \times n} \to \RR$, which are indexed by $\alpha \in \NN^{N \times n}$ and defined by $H_\alpha(y) := \prod_{i=1}^N \prod_{j=1}^n h_{\alpha_{ij}}(y_{ij})$. Letting $|\alpha| := \sum_{i=1}^N \sum_{j=1}^n \alpha_{ij}$, we have that $\{H_\alpha\}_{|\alpha| \le D}$ is a basis for the subspace of polynomials $R^{N \times n} \to \RR$ of degree at most $D$. Furthermore, if $y \in \RR^{N \times n}$ has i.i.d.\ $\mathcal{N}(0,1)$ entries, these polynomials are orthonormal in the sense that
$$\EE[H_\alpha(y)H_\beta(y)] = \One_{\alpha=\beta}.$$

We now establish a general formula for $\Adv_{\le D}$, which is the key to our low-degree lower bounds.

\begin{lemma}\label{lem:formula}
Consider the distribution $\cP$ in Problem~\ref{prob:alt} and suppose the first $D$ moments of $\nu$ are finite. For $\alpha \in \NN^N$, let $|\alpha| := \sum_{i=1}^N \alpha_i$. Then
\begin{equation}\label{eq:ld-formula}
\Adv_{\le D}^2 = \sum_{d=0}^D \EE[\langle u,u' \rangle^d] \sum_{\substack{\alpha \in \NN^N \\ |\alpha| = d}} \prod_{i=1}^N \left(\EE_{x \sim \nu}[h_{\alpha_i}(x)]\right)^2
\end{equation}
where $u$ and $u'$ are drawn independently from $\mathcal{U}$.
\end{lemma}

\noindent The above result generalizes some formulas that have appeared before in the literature. In the case where $\mathcal{U}$ is arbitrary and $\nu$ is $\mathcal{N}(0,1+\beta)$ for some $\beta \ge -1$, Problem~\ref{prob:alt} becomes the \emph{spiked Wishart model} and~\eqref{eq:ld-formula} reduces to Lemma~5.9 of~\cite{sk-cert}. Also, in the case where $u \sim \mathcal{U}$ is i.i.d.\ $\mathrm{Unif}(\{\pm 1/\sqrt{n}\})$ and $\nu$ is arbitrary, the calculations in Appendix~B of~\cite{lifting-sos} yield a formula that is similar to~\eqref{eq:ld-formula}.

The proof of Lemma~\ref{lem:formula} below follows the standard strategy of expanding the likelihood ratio $L = \frac{d \cP}{d \cQ}$ in the basis of Hermite polynomials, but with an additional trick that helps to simplify the calculations: We first decompose $\|L^{\le D}\|^2$ as the sum of many terms (indexed by $u,u'$) and use a \emph{different} Hermite basis (aligned with $u,u'$) for each term.

\begin{proof}[Proof of Lemma~\ref{lem:formula}]
We first use a limiting argument to restrict to the case where $\nu$ is absolutely continuous (with respect to Lebesgue measure). If this is not the case, fix $N,n,D,\mathcal{U},\nu$ and consider a sequence $\{\nu_m\}_{m \in \NN}$ of absolutely continuous measures whose first $D$ moments converge to those of $\nu$, and define $\mathcal{P}_m$ to be the planted distribution from Problem~\ref{prob:alt} with $\nu_m$ in place of $\nu$. This means that for every $k \le D$, $\EE_{x \sim \nu_m} [h_k(x)] \to \EE_{x \sim \nu} [h_k(x)]$ as $m \to \infty$. Also, for every $\alpha \in \NN^{N \times n}$ with $|\alpha| \le D$, $\EE_{y \sim \mathcal{P}_m} [H_\alpha(y)] \to \EE_{y \sim \mathcal{P}} [H_\alpha(y)]$ as $m \to \infty$. Finally, $\Adv_{\le D}$ is a continuous function of the values $\EE_\mathcal{P}[H_\alpha]$; see Lemma~\ref{lem:ld-hermite}. As a result, it is sufficient to prove~\eqref{eq:ld-formula} in the case that $\nu$ is absolutely continuous, so we will assume that the likelihood ratio $L = \frac{d \cP}{d \cQ}$ exists in the sequel.

For functions $f,g: \RR^{N \times n} \to \RR$, define the inner product $\langle f,g \rangle := \EE_{y \sim \cQ}[f(y)g(y)]$ and the associated norm $\|f\| = \sqrt{\langle f,f \rangle}$. Now
\[ \Adv_{\le D} = \max_{f \in \RR[y]_{\le D}} \frac{\langle f,L \rangle}{\|f\|} = \frac{\langle L^{\le D},L \rangle}{\|L^{\le D}\|} = \|L^{\le D}\| \]
where $f^{\le D}$ denotes the orthogonal projection (with respect to $\langle \cdot,\cdot \rangle$) of a function $f$ onto $\RR[y]_{\le D}$ (the subspace of polynomials $\RR^{N \times n} \to \RR$ of degree at most $D$), and $L(y) := \frac{d \cP}{d \cQ}(y)$ is the likelihood ratio. 
Let $\cP_u$ denote the distribution of $y \sim \cP$ conditioned on a particular choice of $u$, and let $L_u(y) := \frac{d \cP_u}{d \cQ}(y)$ so that $L(y) = \EE_{u \sim \mathcal{U}} L_u(y)$. We have 
\begin{equation}\label{eq:L-E}
\|L^{\le D}\|^2 = \langle L^{\le D}, L^{\le D} \rangle = \langle \EE_u L_u^{\le D}, \EE_{u'} L_{u'}^{\le D} \rangle = \EE_{u,u'} \langle L_u^{\le D}, L_{u'}^{\le D} \rangle
\end{equation}
where $u,u' \sim \mathcal{U}$ independently.

We now fix $u,u'$ and focus on computing $\langle L_u^{\le D}, L_{u'}^{\le D} \rangle$. It will be helpful to work in an orthonormal basis $\{e_1,\ldots,e_n\}$ for which $u = e_1$ and $u' = \tau e_1 + \sqrt{1-\tau^2} e_2$ where $\tau := \langle u,u' \rangle$. Recall from above that an orthonormal basis for $\RR[y]_{\le D}$ is given by the multivariate Hermite polynomials $\{H_\alpha\}_{|\alpha| \le D}$, where $\alpha \in \NN^{N \times n}$. Thus,
\begin{equation}\label{eq:L}
\langle L_u^{\le D}, L_{u'}^{\le D} \rangle = \sum_{|\alpha| \le D} \langle L_u,H_\alpha \rangle \langle L_{u'},H_\alpha \rangle.
\end{equation}
Also,
\[ \langle L_u,H_\alpha \rangle = \EE_{y \sim \cP_u}[H_\alpha(y)] = \One_{\{\alpha \text{ supported on entries } (i,1)\}} \prod_{i=1}^N \EE_{x \sim \nu}[h_{\alpha_{i1}}(x)] \]
using the assumption $u = e_1$ along with the facts $h_0(z) = 1$ and $\EE_{z \sim \mathcal{N}(0,1)}[h_k(z)] = 0$ for $k > 0$. In particular, the only $\alpha$'s that contribute to the sum in~\eqref{eq:L} are those that are supported only on entries of the form $(i,1)$. For an $\alpha$ of this type,
\[ \langle L_{u'},H_\alpha \rangle = \EE_{y \sim \cP_{u'}}[H_\alpha(y)] = \prod_{i=1}^N \EE_{\substack{x \sim \nu \\ z \sim \mathcal{N}(0,1)}}[h_{\alpha_{i1}}(\tau x + \sqrt{1-\tau^2} z)]. \]
This can be simplified using the facts in Lemma~\ref{lem:hermite-facts}:
\begin{align}
\EE_{\substack{x \sim \nu \\ z \sim \mathcal{N}(0,1)}}h_k(\tau x + \sqrt{1-\tau^2} z)
&= \EE_x \sum_{\ell=0}^k \sqrt{\frac{\ell!}{k!}} \binom{k}{\ell} (\tau x)^{k-\ell} \EE_z[h_\ell(\sqrt{1-\tau^2}z)] \label{eq:tau-1} \\
&= \EE_x \sum_{\ell=0}^k \sqrt{\frac{\ell!}{k!}} \binom{k}{\ell} (\tau x)^{k-\ell} \One_{\ell \text{ even}} (\ell-1)!! (\ell!)^{-1/2} (-\tau^2)^{\ell/2} \nonumber \\
&= \tau^k \EE_x \sum_{\ell=0}^k \sqrt{\frac{\ell!}{k!}} \binom{k}{\ell} x^{k-\ell} \One_{\ell \text{ even}} (\ell-1)!! (\ell!)^{-1/2} (-1)^{\ell/2} \label{eq:tau-2} \\
&= \tau^k \EE_x[h_k(x)] \nonumber
\end{align}
where the last step follows by comparing~\eqref{eq:tau-2} with the left-hand side of~\eqref{eq:tau-1} in the case $\tau = 1$.

Putting it all together,
\[ \langle L_u^{\le D}, L_{u'}^{\le D} \rangle = \sum_{\substack{\alpha \in \NN^N \\ |\alpha| \le D}} \prod_{i=1}^N \EE_{x \sim \nu}[h_{\alpha_i}(x)] \, \tau^{\alpha_i} \EE_{x \sim \nu}[h_{\alpha_i}(x)] = \sum_{d=0}^D \tau^d \sum_{\substack{\alpha \in \NN^N \\ |\alpha| = d}} \prod_{i=1}^N \left(\EE_{x \sim \nu}[h_{\alpha_i}(x)]\right)^2. \]
The result now follows from~\eqref{eq:L-E}, recalling $\tau := \langle u,u' \rangle$.
\end{proof}

\subsection{Proof of Theorem~\ref{thm:lower}}
\label{sec:pf-adv}

By Lemma~\ref{lem:equiv-detect}, it suffices to study the quantity $\Adv_{\le D}$ for Problem~\ref{prob:alt}, where $\mathcal{U}$ is uniform on the sphere and $\nu$ is $\BG(1, \rho)$. 
According to Lemma~\ref{lem:formula}, to bound $\Adv_{\le D}^2$, we need to compute the term $\EE[\langle u, u' \rangle^d]$. 

\begin{lemma}
\label{lem:inn-prod}
Let $u$ and $u'$ be independent uniform random vectors on the unit sphere in $\RR^n$. For $d \in \NN$, if $d$ is odd, then $\EE[\langle u, u' \rangle^d] = 0$, and if $d$ is even, then 
$$ 
\EE[\langle u, u' \rangle^d] =
\frac{ \Gamma(\frac{n}{2}) \, \Gamma(\frac{d+1}{2}) }{ \sqrt{\pi} \,  \Gamma(\frac{n+d}{2}) } 
\le (d/n)^{d/2} . 
$$
\end{lemma}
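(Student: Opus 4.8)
The plan is to reduce to a one-dimensional computation using rotational invariance, evaluate a single classical moment, and then bound it by an elementary product estimate. Since $u$ and $u'$ are independent and uniform on the sphere, I would condition on $u'$ and invoke the orthogonal invariance of the uniform distribution to assume $u' = e_1$; then $\langle u,u'\rangle = u_1$, the first coordinate of a uniformly random unit vector $u \in \RR^n$. The law of $u$ is invariant under the sign flip $u_1 \mapsto -u_1$, so every odd moment of $u_1$ vanishes, which disposes of the case of odd $d$.

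For even $d = 2m$ I would use the Gaussian representation $u = g/\|g\|$ with $g \sim \mathcal{N}(0,I_n)$, together with the classical fact that $\|g\|$ and the direction $g/\|g\|$ are independent. From $g_1 = \|g\|\,u_1$ this gives $\EE[g_1^{2m}] = \EE[\|g\|^{2m}]\,\EE[u_1^{2m}]$, hence
\[ \EE[\langle u,u'\rangle^{2m}] = \EE[u_1^{2m}] = \frac{\EE[g_1^{2m}]}{\EE[\|g\|^{2m}]} = \frac{(2m-1)!!}{n(n+2)\cdots(n+2m-2)}, \]
where the numerator is the standard scalar Gaussian moment and the denominator is supplied by Lemma~\ref{lem:two-exp}. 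To put this in the stated form I would rewrite $(2m-1)!! = 2^m\,\Gamma(m+\tfrac12)/\sqrt{\pi}$ and $n(n+2)\cdots(n+2m-2) = 2^m\,\Gamma(\tfrac n2 + m)/\Gamma(\tfrac n2)$; cancelling $2^m$ and setting $m = d/2$ yields $\Gamma(\tfrac n2)\,\Gamma(\tfrac{d+1}2)/(\sqrt{\pi}\,\Gamma(\tfrac{n+d}2))$ (the case $d=0$, i.e.\ $m=0$, is the empty product and both sides equal $1$). Equivalently, one could obtain the same expression directly from the density of $u_1$, which is proportional to $(1-t^2)^{(n-3)/2}$ on $[-1,1]$, via the substitution $s=t^2$ and the Beta integral; I would present whichever of these two routes is shorter to write out cleanly.

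Finally, for the inequality I would use the product form
\[ \EE[u_1^{2m}] = \prod_{j=1}^m \frac{2j-1}{\,n+2j-2\,} \;\le\; \prod_{j=1}^m \frac{2j-1}{n} \;\le\; \left(\frac{2m-1}{n}\right)^{m} \;\le\; \left(\frac{2m}{n}\right)^{m} = \left(\frac{d}{n}\right)^{d/2}, \]
using $n+2j-2 \ge n$ and $2j-1 \le 2m-1$ for $1 \le j \le m$. I do not expect a genuine obstacle here: the content is entirely a classical moment evaluation, and the only point needing care is the bookkeeping that converts between the double-factorial/Pochhammer expressions and the Gamma-function form (or, on the integral route, carrying out the Beta-function substitution correctly) — both routine.
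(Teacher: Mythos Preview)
Your proposal is correct. The reduction to $\EE[u_1^d]$ by rotational invariance and the handling of odd $d$ match the paper exactly. For the even moment, however, the paper takes the density route you mention as an alternative: it writes down the marginal density $f_1(x)\propto(1-x^2)^{(n-3)/2}$ and evaluates the moment via the Beta integral, whereas your primary argument uses the Gaussian representation $u=g/\|g\|$, the independence of norm and direction, and Lemma~\ref{lem:two-exp} to obtain $(2m-1)!!/\bigl(n(n+2)\cdots(n+2m-2)\bigr)$ before converting to Gamma functions. Both are standard and of comparable length; your route has the minor advantage of reusing a lemma already in the paper and avoiding the surface-area computation. For the inequality, the paper bounds the Gamma functions directly via $\Gamma(\tfrac n2)/\Gamma(\tfrac{n+d}2)\le(n/2)^{-d/2}$ and $\Gamma(\tfrac{d+1}2)\le(d/2)!\le(d/2)^{d/2}$, while you work with the product form $\prod_{j=1}^m(2j-1)/(n+2j-2)$ and bound each factor; your version is arguably a bit cleaner and sidesteps the slightly awkward step $(d/2)!\le(d/2)^{d/2}$.
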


\begin{proof}
Since $u$ and $u'$ are uniformly random and independent, it holds that $$\EE[\langle u, u' \rangle^d] = \EE[\langle u, e_1 \rangle^d] = \EE[u_1^d],$$ 
where $e_1$ is the first standard basis vector in $\RR^n$ and $u_1$ is the first entry of $u$. 
By symmetry, the expectation is zero if $d$ is odd. We assume that $d$ is even in the sequel.

Before proceeding, let us recall an identity in calculus \cite[3.251.1]{gradshteyn2014table}:
\begin{equation}
\int_{-1}^1 x^{a-1} (1-x^2)^{b-1} \, dx 
= B \left( \frac{a}{2} , b \right) 
= \frac{ \Gamma(\frac{a}{2}) \, \Gamma(b) }{ \Gamma(\frac{a}{2} + b) }
\label{eq:int-beta}
\end{equation}
where $a$ is a positive odd integer, $b > 0$, $B$ denotes the beta function, and $\Gamma$ denotes the gamma function.

Let $f_1$ denote the PDF of $u_1$. Then $f_1(x) \, dx$ is proportional to surface area of the belt on the unit sphere with the first coordinate in $[x, x+dx]$. 
As a result, $f_1(x)$ is proportional to $\frac{(1-x^2)^{\frac{n-2}{2}}}{\sqrt{1-x^2}} = (1-x^2)^{\frac{n-3}{2}}$. 
Applying \eqref{eq:int-beta} with $a = 1$ and $b = \frac{n-1}{2}$, together with the fact that $\int_{-1}^1 f_1(x) \, dx = 1$, we get 
$$
f_1(x) = \frac{\Gamma( \frac n2 )}{\sqrt{\pi} \, \Gamma( \frac{n-1}{2} )} (1-x^2)^{\frac{n-3}{2}}.
$$ 
Finally, we apply \eqref{eq:int-beta} with $a = d+1$ and $b = \frac{n-1}{2}$ to obtain 
$$
\EE[u_1^d] = \int_{-1}^1 x^d f_1(x) \, dx = \frac{\Gamma( \frac n2 )}{\sqrt{\pi} \, \Gamma( \frac{n-1}{2} )}  \frac{ \Gamma(\frac{d+1}{2}) \, \Gamma(\frac{n-1}{2}) }{ \Gamma(\frac{n+d}{2}) }
= \frac{ \Gamma(\frac{n}{2}) \, \Gamma(\frac{d+1}{2}) }{ \sqrt{\pi} \,  \Gamma(\frac{n+d}{2}) }.
$$

To obtain the final bound, it suffices to note that 
$$
\frac{ \Gamma(\frac{n}{2}) }{ \Gamma(\frac{n+d}{2}) } = \frac{1}{ \frac{n}{2} ( \frac{n}{2} + 1 ) \cdots (\frac{n+d}{2}-1) } \le \frac{1}{(n/2)^{d/2}}
$$
and 
$$
\Gamma \left( \frac{d+1}{2} \right) \le \Gamma \left( \frac{d}{2} + 1 \right)
= (d/2)! \le (d/2)^{d/2} . 
$$
\end{proof}

We collect basic estimates regarding the Hermite polynomials in the following lemma. 

\begin{lemma}
For the Bernoulli--Gaussian variable $x \sim \BG(1, \rho)$ defined in \eqref{eq:def-x}, we have 
\begin{equation}
\EE[h_{k}(x)] = 0 \quad \text{if $k$ is odd}, \qquad
\EE[h_0(x)] = 1, \qquad
\EE[h_2(x)] = 0.
\label{eq:herm}
\end{equation}
Moreover, there exists a universal constant $C > 0$ such that for any $k \ge 4$ and $\rho \in (0,1]$, 
\begin{equation}
\left( \EE[h_k(x)] \right)^2 \le k^{2k} \rho^{2-k} . 
\label{eq:sq-bd-2}
\end{equation}
\end{lemma}

\begin{proof}
The claim $\EE[h_{k}(x)] = 0$ for odd $k$ follows from the symmetry of $x \sim \BG(1, \rho)$ and the fact that odd-degree Hermite polynomials are odd functions. 
Also, we have 
$$
\EE[h_0(x)] = \EE[1] = 1, \qquad
\EE[h_2(x)] = \EE \left[ \frac{1}{\sqrt{2}}\left(x^2 - 1 \right) \right] = \frac{1}{\sqrt{2}} \left( \rho \cdot \frac{1}{\rho} - 1 \right) = 0.
$$

Next, let $c_r$ be the coefficient of $z^r$ in the polynomial $\sqrt{k!} \cdot h_k(z)$. 
Since $\EE[x^0] = 1$ and $\EE[x^r] = \rho^{1-r/2} (r-1)!!$ for any even integer $r \ge 2$, we have $\left|\EE[x^r]\right| \le \rho^{1-k/2} (k-1)!!$ for any $k \ge 2$ and $0 \le r \le k$. 
Therefore, for $k \ge 2$, 
$$
\left| \EE[h_k(x)] \right| 
= \frac{1}{\sqrt{k!}} \left| \sum_{r=0}^{k} c_r \EE[x^r] \right|
\le \frac{(k-1)!!}{\sqrt{k!}} \rho^{1-k/2} \sum_{r=0}^{k} |c_r| 
\le \rho^{1-k/2} \sum_{r=0}^{k} |c_r| .
$$
It is known (see e.g.,~\cite{hermite-involution}) that the sum $\sum_{r=0}^k |c_r|$ is equal to the telephone number $T(k)$ \cite[A000085]{oeis}, 
which is the number of self-inverse permutations on $[k]$. 
As a result, we have 
$$
\sum_{r=0}^k |c_r| = T(k) \le k! \le k^k .
$$
Combining the above two bounds finishes the proof.
\end{proof}

The following lemma introduces a crucial subset of $\NN^N$. 

\begin{lemma}
\label{lem:adm-card}
For $\alpha \in \NN^N$, let $|\alpha| = \sum_{i=1}^N \alpha_i = \|\alpha\|_1$, and let $\|\alpha\|_0$ be the size of the support of $\alpha$. 
For $m \in [d]$, define a set 
\begin{equation}
\mathcal{A}(d, m) : = \left\{ \alpha \in \NN^N : |\alpha| = d , \, \|\alpha\|_0 = m, \, \alpha_i \in \{0\} \cup \{ 4, 6, 8, 10, \dots \} \text{ for all } i \in [N] \right\} . 
\label{eq:def-adm}
\end{equation}
Then we have 
$\left| \mathcal{A}(d, m) \right| \le N^m d^{d/2} . $
\end{lemma}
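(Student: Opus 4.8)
The plan is to decouple the choice of the support of $\alpha$ from the choice of its nonzero values, and to bound each factor crudely. First I would dispose of the trivial case: if $\mathcal{A}(d,m)$ is empty there is nothing to prove, so I may assume it contains some $\alpha$. Since such an $\alpha$ has exactly $m$ nonzero entries, each of size at least $4$, and these sum to $d$, this forces $d \ge 4m$ and in particular $d \ge 4$; this lower bound on $d$ will be used only at the very end.

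Next, to count $\mathcal{A}(d,m)$, I would specify an element by the pair of data consisting of (i) its support $S \subseteq [N]$ with $|S| = m$, and (ii) the tuple of values $(\alpha_i)_{i \in S}$. There are $\binom{N}{m} \le N^m$ choices for $S$. For the values, dropping the evenness constraint and the constraint $\alpha_i \ge 4$ only enlarges the count, so the number of admissible value-tuples is at most the number of compositions of $d$ into $m$ positive parts, which is $\binom{d-1}{m-1}$. Hence $|\mathcal{A}(d,m)| \le N^m \binom{d-1}{m-1}$.

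Finally, I would use the crude bound $\binom{d-1}{m-1} \le \sum_{k=0}^{d-1} \binom{d-1}{k} = 2^{d-1} \le 2^d$, together with the elementary inequality $2^d \le d^{d/2}$, which holds exactly when $d \ge 4$ (taking logarithms, this is $2\log 2 \le \log d$). Since we already know $d \ge 4$ whenever $\mathcal{A}(d,m)$ is nonempty, combining these gives $|\mathcal{A}(d,m)| \le N^m d^{d/2}$. There is essentially no genuine obstacle in this argument; the only point requiring a moment's care is ensuring that the inequality $2^d \le d^{d/2}$ is invoked only in the regime $d \ge 4$, which is precisely what nonemptiness of $\mathcal{A}(d,m)$ guarantees, the remaining case being vacuous.
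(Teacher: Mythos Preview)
Your proof is correct and follows essentially the same strategy as the paper: decouple the choice of support ($\binom{N}{m} \le N^m$ options) from the choice of nonzero values, then crudely bound the latter. The only cosmetic difference is in that second bound: the paper observes that distributing the mass amounts to placing $d/2$ ``units of $2$'' into $m$ bins, giving at most $m^{d/2} \le d^{d/2}$, whereas you bound by compositions $\binom{d-1}{m-1} \le 2^d \le d^{d/2}$ and handle the $d \ge 4$ requirement via nonemptiness; both routes are equally valid and arrive at the same estimate.
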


\begin{proof}
Since there are $\binom{N}{m}$ ways to choose the support of $\alpha \in \mathcal{A}(d, m)$, and with the support fixed, there are at most $m^{d/2}$ ways to distribute the mass of $\alpha$, it is easily seen that 
\begin{equation*}
\left| \mathcal{A}(d, m) \right| \le \binom{N}{m} \cdot m^{d/2}
\le N^m d^{d/2} . 
\end{equation*} 
\end{proof}

We are ready to prove Theorem~\ref{thm:lower}.

\begin{proof}[Proof of Theorem~\ref{thm:lower}]
Define $\mathcal{A}(d, m)$ as in \eqref{eq:def-adm}. 
For $\alpha \in \NN^N$ with $|\alpha| = d$ and $\|\alpha\|_0 = m$, if $\alpha \notin \mathcal{A}(d, m)$, then we must have $\EE[h_{\alpha_i}(x)] = 0$ for some $i \in [N]$ by \eqref{eq:herm}. 
On the other hand, for $\alpha \in \mathcal{A}(d, m)$, we obtain from \eqref{eq:sq-bd-2} that 
\begin{equation}
\prod_{i=1}^N \left(\EE[h_{\alpha_i}(x)]\right)^2 
\le \prod_{i \in [N], \, \alpha_i \ne 0} \alpha_i^{2 \alpha_i} \rho^{2-\alpha_i}
\le d^{2d} \rho^{2 m - d} .
\label{eq:prod}
\end{equation}
It follows that for $d \ge 4$,
\begin{align*}
\sum_{\substack{\alpha \in \NN^N \\ |\alpha| = d}} \prod_{i=1}^N \left(\EE[h_{\alpha_i}(x)]\right)^2 
&= \sum_{m=1}^{\lfloor d/4 \rfloor} \sum_{\substack{\alpha \in \mathcal{A}(d, m)}} \prod_{i=1}^N \left(\EE[h_{\alpha_i}(x)]\right)^2 \\
&\le \sum_{m=1}^{\lfloor d/4 \rfloor} \left| \mathcal{A}(d, m) \right| \cdot d^{2d} \rho^{2 m - d} 
\le \sum_{m=1}^{\lfloor d/4 \rfloor} N^m d^{d/2} d^{2d} \rho^{2 m - d} 
\end{align*}
by \eqref{eq:prod} and Lemma~\ref{lem:adm-card}. 
Since we assume $N \rho \ge n \rho \gg \sqrt{N}$, it holds that $N \rho^2 \ge 2$. Then, computing the above sum yields 
\begin{align*}
\sum_{\substack{\alpha \in \NN^N \\ |\alpha| = d}} \prod_{i=1}^N \left(\EE[h_{\alpha_i}(x)]\right)^2 
&\le d^{5d/2} N \rho^{2-d} \frac{(N \rho^2)^{\lfloor d/4 \rfloor} - 1}{N \rho^2 - 1} \\
&\le d^{5d/2} N \rho^{2-d} \frac{(N \rho^2)^{d/4}}{\frac{1}{2} N \rho^2}
= 2 d^{5d/2} N^{d/4} \rho^{-d/2} .
\end{align*}
This together with Lemma~\ref{lem:inn-prod} gives 
\begin{equation*} 
\EE[\langle u,u' \rangle^d] \sum_{\substack{\alpha \in \NN^N \\ |\alpha| = d}} \prod_{i=1}^N \left(\EE[h_{\alpha_i}(x)]\right)^2 
\le (d/n)^{d/2} \cdot 2 d^{5d/2} N^{d/4} \rho^{-d/2}
= 2 \left( \frac{ d^{12} N }{ n^2 \rho^2 } \right)^{d/4} . 
\end{equation*} 
Finally, combining this with Lemma~\ref{lem:formula}, we obtain  
\begin{equation*} 
\Adv_{\le D}^2 = \sum_{d=0}^D \EE[\langle u,u' \rangle^d] \sum_{\substack{\alpha \in \NN^N \\ |\alpha| = d}} \prod_{i=1}^N \left(\EE[h_{\alpha_i}(x)]\right)^2 
\le 1 + 2 \sum_{d=4}^D \left( \frac{ d^{12} N }{ n^2 \rho^2 } \right)^{d/4} . 
\end{equation*}
It suffices to have the above sum bounded by $2$ when $D \le (\log N)^{C_1}$ for a constant $C_1 > 0$. 
This is true provided $n \rho \ge  \sqrt{N} (\log N)^{C_2}$ for a sufficiently large constant $C_2 = C_2(C_1) > 0$ so that $\frac{d^{12} N }{ n^2 \rho^2 } \le \frac{1}{4}$. 
\end{proof}

\appendix

\section{Additional Proofs}
\label{sec:add-proof}

\subsection{Proof of Proposition~\ref{prop:gauss-spec-norm}}
\label{sec:pf-gauss-spec-norm}

By the definition of $M$ in \eqref{eq:def-mg}, we have  
\begin{equation*}
\Mg = \sum_{i=1}^N \left( v_i^2 + \|b_i\|^2 - \frac{n-1}{N} \right) \yg_i \yg_i^\top - \frac{3}{N} I_n
= \sum_{i=1}^N \left( v_i^2 + \|b_i\|^2 - \frac{n-1}{N} \right) 
\begin{bmatrix}
v_i^2 & v_i b_i^\top \\
v_i b_i & b_i b_i^\top 
\end{bmatrix} 
- \frac{3}{N} 
\begin{bmatrix}
1 & 0 \\
0 & I_{n-1} 
\end{bmatrix}
,
\end{equation*}
so 
\begin{align*}
\Mg -  \left( \|v\|_4^4 - \frac{3}{N} \right) e_1 e_1^\top 
&= \sum_{i=1}^N \left( \|b_i\|^2 - \frac{n-1}{N} \right) 
\begin{bmatrix}
v_i^2 & 0 \\
0 & 0 
\end{bmatrix} 
\\
&\quad + \sum_{i=1}^N \left( v_i^2 + \|b_i\|^2 - \frac{n-1}{N} \right) 
\begin{bmatrix}
0 & v_i b_i^\top \\
v_i b_i & 0 
\end{bmatrix} 
\\
&\quad + \sum_{i=1}^N \left( v_i^2 + \|b_i\|^2 - \frac{n-1}{N} \right) 
\begin{bmatrix}
0 & 0 \\
0 & b_i b_i^\top 
\end{bmatrix} 
- \frac{3}{N} 
\begin{bmatrix}
0 & 0 \\
0 & I_{n-1} 
\end{bmatrix} . 
\end{align*}
As a result,
\begin{align*}
\left\| \Mg -  \left( \|v\|_4^4 - \frac{3}{N} \right) e_1 e_1^\top \right\| 
&\le 
\left| \sum_{i=1}^N \left( \|b_i\|^2 - \frac{n-1}{N} \right) v_i^2 \right|  \\
&\quad + 2 \left\| \sum_{i=1}^N v_i^3 b_i \right\| 
+ 2 \left\| \sum_{i=1}^N \left( \|b_i\|^2 - \frac{n-1}{N} \right) v_i b_i \right\| \\
& \quad + \left\| \sum_{i=1}^N v_i^2 b_i b_i^\top - \frac{1}{N} I_{n-1} \right\|
+ \left\| \sum_{i=1}^N \left( \|b_i\|^2 - \frac{n-1}{N} \right) b_i b_i^\top - \frac{2}{N} I_{n-1} \right\| . 
\end{align*}

Lemmas~\ref{lem:l1}--\ref{lem:l5} below provide bounds on the five terms on the right-hand side of the above inequality respectively. 
Hence, there is a universal constant $C>0$ such that for any $\delta \in (0,1)$, 
\begin{align*}
\left\| \Mg -  \left( \|v\|_4^4 - \frac{3}{N} \right) e_1 e_1^\top \right\| 
&\le C \bigg(  \|v\|_4^2 \frac{\sqrt{n \log(N/\delta)}}{N} + \frac{\|v\|_6^3}{\sqrt{N}} \left( \sqrt{n} + \sqrt{\log(N/\delta)} \right)  \\
&\qquad + \left( \|v\| + 1 \right) \frac{n \sqrt{\log(N/\delta)}}{N^{3/2}} + \|v\|_\infty \frac{n \log^{5/2}(N/\delta)}{N^{3/2}} \\
&\qquad + \|v\|_\infty^2 \frac{n \log(N/\delta) + \log^2(N/\delta)}{N}  + \frac{1}{N} \left| \|v\|^2 - 1 \right| 
+ \frac{ (n \log(N/\delta))^{3/2} }{N^2} \bigg)   
\end{align*}
with probability at least $1 - \delta$, where we have omitted repetitive bounds of the same or lower order for brevity using the assumption $N \ge \log^7(N/\delta)$. 
Moreover, 
using the fact $2xy \le x^2+y^2$ for $x,y \in \RR$ and the assumption $N \ge \log^7(N/\delta)$, 
we obtain
$$
\|v\|_\infty \frac{n \log^{5/2}(N/\delta)}{N^{3/2}} 
\le \|v\|_\infty^2 \frac{n \log(N/\delta)}{N} + \frac{n \log^4(N/\delta)}{N^2} 
\le \|v\|_\infty^2 \frac{n \log(N/\delta)}{N} + \frac{n \sqrt{\log(N/\delta)}}{N^{3/2}}.
$$
Further simplification using the above bound yields 
\begin{align*}
\left\| \Mg -  \left( \|v\|_4^4 - \frac{3}{N} \right) e_1 e_1^\top \right\| 
&\le C \bigg( \left( \|v\| + 1 \right) \frac{n \sqrt{\log(N/\delta)}}{N^{3/2}} + \|v\|_4^2 \frac{\sqrt{n \log(N/\delta)}}{N} + \|v\|_6^3 \frac{ \sqrt{n} + \sqrt{\log(N/\delta)} }{\sqrt{N}} \\
&\qquad \quad 
+  \|v\|_\infty^2 \frac{n \log(N/\delta) + \log^2(N/\delta)}{N} 
+ \frac{ (n \log(N/\delta))^{3/2} }{N^2} + \frac{1}{N} \left| \|v\|^2 - 1 \right| \bigg)  ,
\end{align*}
finishing the proof of Proposition~\ref{prop:gauss-spec-norm}.

\begin{lemma}
\label{lem:l1}
There is a universal constant $C>0$ such that for any $\delta \in (0,1)$, 
$$
\left| \sum_{i=1}^N \left( \|b_i\|^2 - \frac{n-1}{N} \right) v_i^2 \right|
\le C \left(  \frac{\sqrt{n \log(1/\delta)}}{N} \|v\|_4^2 + \frac{\sqrt{n} \log(1/\delta)}{N} \|v\|_\infty^2 \right)
$$
with probability at least $1 - \delta$. 
\end{lemma}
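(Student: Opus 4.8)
The plan is to view the left-hand side as a weighted sum of i.i.d.\ centered sub-exponential random variables and apply a Bernstein-type tail bound. Writing $b_i = N^{-1/2}(g_{i1},\dots,g_{i,n-1})$ with the $g_{ij}$ i.i.d.\ $\mathcal{N}(0,1)$, we have $\|b_i\|^2 - \tfrac{n-1}{N} = \tfrac1N\sum_{j=1}^{n-1}(g_{ij}^2-1)$, so that
\[
\sum_{i=1}^N \Big(\|b_i\|^2 - \tfrac{n-1}{N}\Big) v_i^2 \;=\; \frac1N \sum_{i=1}^N \sum_{j=1}^{n-1} v_i^2\,(g_{ij}^2-1) \;=:\; \frac1N \sum_{i,j} X_{ij},
\]
where the $X_{ij}$ are independent, mean zero, and each equals $v_i^2$ times the sub-exponential variable $g_{ij}^2-1$. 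Since $\EE\exp(\lambda(g^2-1)) = e^{-\lambda}(1-2\lambda)^{-1/2}$ is finite for $|\lambda|<1/2$, the variable $g^2-1$ has sub-exponential (Orlicz $\psi_1$) norm bounded by an absolute constant $C_0$, hence $\|X_{ij}\|_{\psi_1} \le C_0\, v_i^2$.

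I would then apply the standard Bernstein inequality for sums of independent sub-exponential random variables (see, e.g., Vershynin, \emph{High-Dimensional Probability}): for all $t\ge 0$,
\[
\PP\!\left(\Big|\sum_{i,j} X_{ij}\Big| \ge t\right) \le 2\exp\!\left(-c\min\!\left(\frac{t^2}{\sum_{i,j}\|X_{ij}\|_{\psi_1}^2},\;\frac{t}{\max_{i,j}\|X_{ij}\|_{\psi_1}}\right)\right).
\]
Here $\sum_{i,j}\|X_{ij}\|_{\psi_1}^2 \le C_0^2(n-1)\sum_{i}v_i^4 = C_0^2(n-1)\|v\|_4^4 \le C_0^2\, n\,\|v\|_4^4$ and $\max_{i,j}\|X_{ij}\|_{\psi_1}\le C_0\,\|v\|_\infty^2$. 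Choosing $t = C\big(\sqrt{n\log(1/\delta)}\,\|v\|_4^2 + \log(1/\delta)\,\|v\|_\infty^2\big)$ for a large enough absolute constant $C$ makes the right-hand side at most $\delta$, so with probability at least $1-\delta$ we get $\big|\sum_{i,j}X_{ij}\big|\le t$; dividing by $N$ and using $1\le\sqrt n$ to replace $\log(1/\delta)\,\|v\|_\infty^2$ by $\sqrt n\,\log(1/\delta)\,\|v\|_\infty^2$ yields the stated inequality (in fact a slightly stronger one).

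This argument is essentially routine and I do not anticipate a genuine obstacle; the points that need care are: verifying the $O(1)$ bound on $\|g^2-1\|_{\psi_1}$ (standard, from the MGF above), and tracking which Bernstein regime produces which term — the quadratic (``sub-Gaussian'') branch contributes the $\sqrt{n\log(1/\delta)}\,\|v\|_4^2$ term via $\sum_i v_i^4 = \|v\|_4^4$, while the linear branch contributes the $\|v\|_\infty^2\log(1/\delta)$ term via $\max_i v_i^2 = \|v\|_\infty^2$ — together with the cosmetic point that solving $2e^{-c(\cdots)}=\delta$ actually produces $\log(2/\delta)$, which is $\lesssim \log(1/\delta)$ for, say, $\delta\le 1/2$ and is otherwise absorbed into $C$. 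As an alternative that avoids a black-box inequality, one can instead bound $\EE\exp(\lambda\sum_{i,j}X_{ij}) = \prod_{i,j}\EE\exp(\lambda v_i^2(g_{ij}^2-1))$ directly for $|\lambda|\le c/\|v\|_\infty^2$ and optimize the resulting Chernoff bound over $\lambda$, which reproduces the same two terms.
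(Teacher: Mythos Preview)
Your proof is correct and takes essentially the same approach as the paper: view the sum as a weighted sum of centered sub-exponential variables and apply Bernstein. The only cosmetic difference is the grouping---the paper treats each $N\|b_i\|^2-(n-1)$ as a single sub-exponential variable with parameter $O(\sqrt{n})$ and applies Bernstein to the $N$ weighted terms, whereas you unfold to the $N(n-1)$ individual $g_{ij}^2-1$ terms; your finer decomposition in fact yields the slightly sharper linear term $\|v\|_\infty^2\log(1/\delta)$ (without the $\sqrt{n}$), which you then relax to match the stated bound.
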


\begin{proof}
Since $N \|b_i\|^2$ is a chi-square random variable with $n-1$ degrees of freedom, we have
$$
\EE \exp \Big( \lambda\left( N \|b_i\|^2 - (n-1) \right) \Big) 
= \left( \frac{ e^{-\lambda} }{ \sqrt{1-2\lambda} } \right)^{n-1} 
\le e^{2(n-1) \lambda^2} 
$$
for $\lambda \in [-1/3,1/3]$. 
Hence, by definition \eqref{eq:def-sub-exp}, $N \|b_i\|^2 - (n-1)$ is a sub-exponential random variable with parameter $C_1 \sqrt{n}$ for a constant $C_1>0$. By Bernstein's inequality (Lemma~\ref{lem:bern}), we obtain
$$
\left| \sum_{i=1}^N v_i^2 \left( N \|b_i\|^2 - (n-1) \right) \right| 
\le C_2 \left( \|v\|_4^2 \sqrt{n \log(1/\delta)} + \|v\|_\infty^2 \sqrt{n} \log(1/\delta) \right)
$$
with probability at least $1 - \delta$ for  a constant $C_2 > 0$. 
Dividing both sides by $N$ finishes the proof.
\end{proof}

\begin{lemma}
\label{lem:l2}
There is a universal constant $C>0$ such that for any $\delta \in (0,1)$, 
\begin{equation*}
\left\| \sum_{i=1}^N v_i^3 b_i \right\|
\le C \frac{\|v\|_6^3}{\sqrt{N}} \left( \sqrt{n} + \sqrt{\log(1/\delta)} \right) 
\end{equation*}
with probability at least $1 - \delta$. 
\end{lemma}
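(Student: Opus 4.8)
The plan is to exploit the fact that, unlike the heavier-tailed sums in the neighbouring lemmas, the vector $w := \sum_{i=1}^N v_i^3 b_i$ is \emph{exactly} Gaussian: it is a linear combination of the independent Gaussian vectors $b_i \sim \mathcal{N}(0, \frac{1}{N} I_{n-1})$, hence centered Gaussian in $\RR^{n-1}$ with covariance
$$
\EE[w w^\top] = \sum_{i=1}^N v_i^6 \, \EE[b_i b_i^\top] = \frac{1}{N}\Big(\sum_{i=1}^N v_i^6\Big) I_{n-1} = \frac{\|v\|_6^6}{N}\, I_{n-1}.
$$
Therefore $w$ has the same distribution as $\frac{\|v\|_6^3}{\sqrt{N}}\, z$ with $z \sim \mathcal{N}(0, I_{n-1})$, and it suffices to bound $\|z\|$.

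For this I would invoke standard concentration for the norm of a standard Gaussian vector. Since $z \mapsto \|z\|$ is $1$-Lipschitz, Gaussian concentration gives $\PP\{\|z\| \ge \EE\|z\| + t\} \le e^{-t^2/2}$, and $\EE\|z\| \le \sqrt{n-1}$; alternatively one can apply the Laurent--Massart chi-square tail bound directly to $\|z\|^2$. Taking $t = \sqrt{2\log(1/\delta)}$ yields $\|z\| \le \sqrt{n-1} + \sqrt{2\log(1/\delta)}$ with probability at least $1-\delta$.

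Combining the two facts and using $\sqrt{n-1} + \sqrt{2\log(1/\delta)} \le C(\sqrt{n} + \sqrt{\log(1/\delta)})$ for an absolute constant $C$, we obtain
$$
\Big\| \sum_{i=1}^N v_i^3 b_i \Big\| = \|w\| \le \frac{\|v\|_6^3}{\sqrt{N}}\big(\sqrt{n-1} + \sqrt{2\log(1/\delta)}\big) \le C \frac{\|v\|_6^3}{\sqrt{N}}\big(\sqrt{n} + \sqrt{\log(1/\delta)}\big)
$$
with probability at least $1-\delta$, which is the claim. There is essentially no obstacle here; the only points requiring (minor) care are citing the concentration inequality in a form that produces the desired absolute constant and splitting the $\sqrt{n}$ and $\sqrt{\log(1/\delta)}$ contributions. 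The argument is short precisely because the sum is Gaussian rather than merely sub-exponential.
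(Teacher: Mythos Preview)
Your proof is correct and essentially identical to the paper's: both observe that $\sum_{i=1}^N v_i^3 b_i \sim \mathcal{N}\bigl(0, \tfrac{\|v\|_6^6}{N} I_{n-1}\bigr)$ and then apply standard Gaussian/chi-square concentration to bound the norm. The paper cites its chi-square tail bound (Lemma~\ref{lem:chi-sq}) rather than Gaussian Lipschitz concentration, but this is a cosmetic difference.
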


\begin{proof}
Since $\sum_{i=1}^N v_i^3 b_i \sim \mathcal{N} \left( 0, \frac{\|v\|_6^6}{N} I_{n-1} \right)$, by the tail bound for the chi-square distribution (Lemma~\ref{lem:chi-sq}), 
\begin{equation*}
\left\| \sum_{i=1}^N v_i^3 b_i \right\|^2
\le \frac{\|v\|_6^6}{N} \left[ n + C_1 \left( \sqrt{n \log(1/\delta)} + \log(1/\delta) \right) \right] 
\end{equation*}
with probability at least $1 - \delta$ for a constant $C_1>0$. 
Taking the square root on both sides completes the proof. 
\end{proof}

\begin{lemma}
\label{lem:l3}
There is a universal constant $C>0$ such that for any $\delta \in (0,1)$, 
$$
\left\| \sum_{i=1}^N \left( \|b_i\|^2 - \frac{n-1}{N} \right) v_i b_i \right\| 
\le C \left( \|v\| \frac{n \sqrt{\log(N/\delta)}}{N^{3/2}} + \|v\|_\infty \frac{n \log^{5/2}(N/\delta)}{N^{3/2}} \right)
$$
with probability at least $1 - \delta$. 
\end{lemma}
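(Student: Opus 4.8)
The plan is to reduce the norm of this random vector to a maximum over its coordinates, and then control each coordinate by a truncation-plus-Bernstein argument; the union bound over coordinates is what converts a per-coordinate estimate into the claimed bound without any loss.

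First I would rescale: writing $b_i = g_i/\sqrt N$ with $g_i \sim \mathcal N(0,I_{n-1})$ i.i.d., the quantity of interest equals $\frac{1}{N^{3/2}}\|W'\|$ where $W' := \sum_{i=1}^N (\|g_i\|^2 - (n-1))\, v_i\, g_i \in \RR^{n-1}$. Since $\|W'\| \le \sqrt{n-1}\,\max_{k\in[n-1]} |(W')_k|$, it suffices to bound a single coordinate $(W')_k = \sum_i v_i g_{ik}(\|g_i\|^2 - (n-1))$ at failure probability $\delta/(n-1)$ and then union bound over $k$. Note the per-coordinate target is of order $\|v\|\sqrt{n\log(N/\delta)}$, so the extra $\sqrt{n-1}$ factor produces exactly $\|v\|\, n\sqrt{\log(N/\delta)}$ after dividing by $N^{3/2}$; no room is wasted.

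For a fixed $k$, I would peel off the $k$-th entry: $\|g_i\|^2 - (n-1) = (g_{ik}^2-1) + (\|g_i^{(-k)}\|^2 - (n-2))$, where $g_i^{(-k)}$ drops the $k$-th coordinate, giving $(W')_k = S_1 + S_2$ with $S_1 = \sum_i v_i g_{ik}(g_{ik}^2-1)$ and $S_2 = \sum_i v_i g_{ik}(\|g_i^{(-k)}\|^2-(n-2))$. Both are sums of independent mean-zero variables (in $S_2$, $g_{ik}$ is independent of $\|g_i^{(-k)}\|^2$). A direct computation gives $\EE[g^2(g^2-1)^2] = \EE[g^6-2g^4+g^2] = 15-6+1 = 10$, so the summands of $S_1$ have total variance $10\|v\|^2$, and those of $S_2$ have total variance $\le 2n\|v\|^2$. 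These summands are cubic in Gaussians and hence not sub-exponential, so I would truncate: on the event $\mathcal E$ that $|g_{ik}| \le C\sqrt{\log(N/\delta)}$ for all $i,k$ and $\big|\|g_i\|^2-(n-1)\big| \le C(\sqrt{n\log(N/\delta)}+\log(N/\delta))$ for all $i$ — which holds with probability $\ge 1-\delta/2$ by Gaussian and chi-square tail bounds (Lemma~\ref{lem:chi-sq}) and a union bound over the $\le N^2$ indices — the summands of $S_1$ are bounded by $C\|v\|_\infty\log^{3/2}(N/\delta)$ and those of $S_2$ by $C\|v\|_\infty(\sqrt n\,\log(N/\delta)+\log^{3/2}(N/\delta))$. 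Replacing each summand by its truncated version (still independent across $i$, differing from the original only off $\mathcal E$, with a negligibly small mean shift because the truncation level is $C\sqrt{\log(N/\delta)}$), Bernstein's inequality (Lemma~\ref{lem:bern}) at confidence $\delta/(n-1)$ gives, on $\mathcal E$, $|S_1| \lesssim \|v\|\sqrt{\log(N/\delta)} + \|v\|_\infty\log^{5/2}(N/\delta)$ and $|S_2| \lesssim \|v\|\sqrt{n\log(N/\delta)} + \|v\|_\infty\sqrt n\,\log^2(N/\delta) + \|v\|_\infty\log^{5/2}(N/\delta)$.

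Summing, taking the maximum over $k$ by a union bound, multiplying by $\sqrt{n-1}$, and using $\log(N/\delta)\ge 1$ to absorb the $\log^2$ term into the $\log^{5/2}$ term (and $\sqrt n \le n$ to tidy), I would obtain $\|W'\| \le C(\|v\|\, n\sqrt{\log(N/\delta)} + \|v\|_\infty\, n\log^{5/2}(N/\delta))$ with probability $\ge 1-\delta$, and dividing by $N^{3/2}$ finishes the proof. The main obstacle is purely bookkeeping: arranging the truncation so that Bernstein applies to genuinely independent, genuinely bounded variables with the right variance proxies, and carefully tracking which powers of $\log(N/\delta)$ come from the truncation level, from the Bernstein boundedness term, and from the two union bounds, so that everything collapses to exactly the two stated terms.
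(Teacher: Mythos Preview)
Your proof is correct but takes a genuinely different route from the paper. The paper applies the matrix Bernstein inequality with truncation (Lemma~\ref{lem:tmb}) directly to the $n\times 1$ ``matrices'' $S_i = (\|b_i\|^2 - \frac{n-1}{N})\,v_i\,b_i$: it truncates on the chi-square event~\eqref{eq:ei}, reads off $L$ from that truncation level, and computes the variance $\sigma^2 = \sum_i \EE[\tilde S_i^\top \tilde S_i]$ via the closed-form chi-square moment identity $\EE[(\|b_i\|^2-\frac{n-1}{N})^2\|b_i\|^2] = \frac{2(n-1)(n+3)}{N^3}$ (Lemma~\ref{lem:two-exp}). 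One invocation of matrix Bernstein then gives the result. Your approach instead reduces to coordinates via $\|W'\|\le\sqrt{n-1}\max_k|(W')_k|$, splits each coordinate into the ``self'' part $S_1$ and the ``cross'' part $S_2$, and applies scalar Bernstein for bounded variables (you cite Lemma~\ref{lem:bern}, but Lemma~\ref{lem:bern-bdd} is the one that matches your truncated, bounded summands with the actual-variance term you use). The $\sqrt{n-1}$ loss from the coordinate reduction is exactly offset by the gain in per-coordinate variance, so the final bound matches. Your route avoids matrix Bernstein and the moment computation of Lemma~\ref{lem:two-exp} at the cost of more bookkeeping (the $S_1/S_2$ split, two truncation scales, and the extra union bound over $k$); the paper's route is more compact but uses heavier tools. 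One small point worth noting explicitly: your truncated summands retain mean zero because the truncation events are symmetric in $g_{ik}$ and the summands are odd in $g_{ik}$, so the ``negligibly small mean shift'' is in fact exactly zero.
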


\begin{proof}
We will apply the matrix Bernstein inequality with truncation (Lemma~\ref{lem:tmb}) with the $n \times 1$ matrices $S_i = \left( \|b_i\|^2 - \frac{n-1}{N} \right) v_i b_i$.  
For each $i \in [N]$, let us define an event 
\begin{equation}
\mathcal{E}_i = \left\{ \left| \|b_i\|^2 - \frac{n-1}{N} \right| \le C_1 \frac{\sqrt{n \log(N/\delta)} + \log(N/\delta)}{N} \right\} 
\label{eq:ei}
\end{equation}
so that $\PP\{\mathcal{E}_i\} \ge 1 - \delta/N^2$ for  a constant $C_1 > 0$ by Lemma~\ref{lem:chi-sq}. 
Moreover, we define 
$$\tilde S_i = \left( \|b_i\|^2 - \frac{n-1}{N} \right) v_i b_i \cdot \mathbf{1} \{\mathcal{E}_i\} .$$ 
The entries of $S_i$ and $\tilde S_i$ have distributions symmetric around zero, so $\EE[S_i] = \EE[ \tilde S_i] = 0$. In addition, 
$$
\left\| \tilde S_i \right\| 
\le C_2 |v_i| \frac{\sqrt{n \log(N/\delta)} + \log(N/\delta)}{N} \sqrt{ \frac{n + \log(N/\delta)}{N} }
\le C_3 \|v\|_\infty \frac{n \sqrt{\log(N/\delta)} + \log^{3/2}(N/\delta)}{N^{3/2}} =: L 
$$
for  constants $C_2, C_3 > 0$.

Moreover, to bound the variance 
$$
\sigma^2 := \left\| \sum_{i=1}^N \EE[ \tilde S_i \tilde S_i^\top ]  \right\| \lor \left\| \sum_{i=1}^N \EE[ \tilde S_i^\top \tilde S_i ]  \right\|
= \sum_{i=1}^N \EE \left[ \left( \|b_i\|^2 - \frac{n-1}{N} \right)^2 v_i^2 \|b_i\|^2 \cdot \mathbf{1} \{\mathcal{E}_i\} \right] , 
$$
we compute 
\begin{align*}
\EE \left[ \left( \|b_i\|^2 - \frac{n-1}{N} \right)^2  \|b_i\|^2 \cdot \mathbf{1} \{\mathcal{E}_i\} \right] 
&\le \EE \left[ \left( \|b_i\|^2 - \frac{n-1}{N} \right)^2  \|b_i\|^2 \right] \\
&=  \EE \left[ \|b_i\|^6 \right]
- 2 \frac{n-1}{N} \EE \left[ \|b_i\|^4 \right]
+ \frac{(n-1)^2}{N^2} \EE \left[ \|b_i\|^2 \right] \\
&= \frac{1}{N^3} \left[ (n-1)(n+1)(n+3) - 2(n-1)^2(n+1) + (n-1)^3 \right] \\
&= \frac{2(n-1)(n+3)}{N^3} 
\end{align*}
in view of the moments of the chi-square distribution (Lemma~\ref{lem:two-exp}). 
Hence we obtain
$$
\sigma^2 \le \sum_{i=1}^N v_i^2 \frac{2(n-1)(n+3)}{N^3} = \|v\|^2 \frac{2(n-1)(n+3)}{N^3}.
$$

Lemma~\ref{lem:tmb} implies that 
$$
\left\| \sum_{i=1}^N S_i \right\| 
\le C_4 \left( \sigma \sqrt{\log(N/\delta)} + L \log(N/\delta) \right)
$$
with probability at least $1 - \delta$ for a constant $C_4>0$. 
Combining this with the above bound on $\sigma^2$ and the definition of $L$, we complete the proof. 
\end{proof}

\begin{lemma}
\label{lem:l4}
There is a universal constant $C>0$ such that for any $\delta \in (0,1)$, 
$$
\left\| \sum_{i=1}^N v_i^2 b_i b_i^\top - \frac{1}{N} I_{n-1} \right\| 
= C \left( \frac{\sqrt{n \log(N/\delta)}}{N} \|v\|_4^2 + \frac{n \log(N/\delta) + \log^2(N/\delta)}{N} \|v\|_\infty^2 \right) + \frac{1}{N} \left| \|v\|^2 - 1 \right| 
$$
with probability at least $1 - \delta$. 
\end{lemma}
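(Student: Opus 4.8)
The plan is to mirror the proof of Lemma~\ref{lem:l3}: split off the deterministic part coming from $\|v\|^2\ne 1$, and control the remaining centered sum with the truncated matrix Bernstein inequality (Lemma~\ref{lem:tmb}). Since $\EE[b_ib_i^\top]=\frac1N I_{n-1}$, the mean of $\sum_i v_i^2 b_ib_i^\top$ is $\frac{\|v\|^2}{N}I_{n-1}$, so I would write
$$
\sum_{i=1}^N v_i^2 b_ib_i^\top - \frac1N I_{n-1} \;=\; \Big(\sum_{i=1}^N v_i^2 b_ib_i^\top - \frac{\|v\|^2}{N}I_{n-1}\Big) + \frac{\|v\|^2-1}{N}I_{n-1},
$$
where the last term has norm exactly $\frac1N\big|\|v\|^2-1\big|$, accounting for the final summand in the claimed bound (and the ``$=$'' in the statement should read ``$\le$''). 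It then remains to bound the centered matrix $\sum_i S_i$ with $S_i := v_i^2 b_ib_i^\top - \frac{v_i^2}{N}I_{n-1}$, which has mean zero and a distribution symmetric about zero.

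For the truncation, I would use the event $\mathcal E_i = \{\|b_i\|^2 \le C_1\frac{n+\log(N/\delta)}{N}\}$, which holds with probability at least $1-\delta/N^2$ by the chi-square tail bound (Lemma~\ref{lem:chi-sq}); on $\mathcal E_i$ one has $\|S_i\| \le v_i^2\|b_i\|^2 + v_i^2/N \le C\,\|v\|_\infty^2\frac{n+\log(N/\delta)}{N}=:L$. For the matrix variance, I would compute $\EE[S_i^2] = v_i^4\big(\EE[\|b_i\|^2 b_ib_i^\top] - \tfrac{2}{N}\EE[b_ib_i^\top] + \tfrac{1}{N^2}I_{n-1}\big)$, using the Gaussian fourth-moment identity $\EE[\|b_i\|^2 b_ib_i^\top] = \frac{n+1}{N^2}I_{n-1}$ (which follows from $\EE\|b_i\|^4 = \frac{(n-1)(n+1)}{N^2}$ together with rotational symmetry, cf.\ Lemma~\ref{lem:two-exp}); this collapses to $\EE[S_i^2] = v_i^4\frac{(n+1)-2+1}{N^2}I_{n-1} = v_i^4\frac{n}{N^2}I_{n-1}$, so $\sigma^2 := \big\|\sum_i \EE[\tilde S_i\tilde S_i^\top]\big\| \le \frac{n\|v\|_4^4}{N^2}$ (truncation only shrinks it, and $S_i^\top S_i = S_iS_i^\top$). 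Applying Lemma~\ref{lem:tmb} gives $\big\|\sum_i S_i\big\| \le C\big(\sigma\sqrt{\log(N/\delta)} + L\log(N/\delta)\big)$ with probability at least $1-\delta$, and substituting $\sigma = \frac{\sqrt n\,\|v\|_4^2}{N}$ and the value of $L$ produces exactly $\frac{\sqrt{n\log(N/\delta)}}{N}\|v\|_4^2 + \frac{n\log(N/\delta)+\log^2(N/\delta)}{N}\|v\|_\infty^2$ up to the constant; combining with the triangle-inequality term above finishes the proof.

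The main (and really only) non-routine step is the matrix-variance computation: one must carry all three terms of $S_i^2$, in particular the cross term from the $-\frac{v_i^2}{N}I_{n-1}$ correction, and use the exact coefficient $\frac{n+1}{N^2}$ in $\EE[\|b_i\|^2 b_ib_i^\top]$, since it is the cancellation $(n+1)-2+1=n$ that yields the correct $\sqrt n$ (rather than a spurious $n$) scaling in the $\|v\|_4^2$ term. A secondary technicality, handled exactly as in Lemma~\ref{lem:l3}, is that truncation moves $\EE[\tilde S_i]$ slightly off zero, by an amount controlled by $\PP(\mathcal E_i^c)\le \delta/N^2$; this is absorbed into the constant and does not affect the final bound.
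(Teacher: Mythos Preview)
Your proof is correct and follows essentially the same route as the paper: split off the deterministic $\frac{1}{N}|\|v\|^2-1|$ term, then apply truncated matrix Bernstein (Lemma~\ref{lem:tmb}) with $S_i$ built from $v_i^2 b_ib_i^\top$, obtaining the same $L$ and $\sigma^2$ up to constants. Two small corrections: the diagonal entries of your $S_i$ are mean-zero but \emph{not} symmetric about zero, and the $\Delta$ term is not ``handled exactly as in Lemma~\ref{lem:l3}'' (there $\Delta=0$ by an odd-symmetry argument that does not apply here) --- the paper instead invokes Lemma~\ref{lem:chi-sq-exp} to get $\Delta\le\|v\|_\infty^2 N^{-10}$; also, the cancellation $(n{+}1)-2+1=n$ is not essential for the $\sqrt{n}$ scaling, since the paper simply bounds the variance by $\|v\|_4^4(n{+}1)/N^2$ without it.
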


\begin{proof}
We will apply the matrix Bernstein inequality with truncation (Lemma~\ref{lem:tmb}) with symmetric matrices
$S_i = v_i^2 b_i b_i^\top$. 
Let us define the event $\mathcal{E}_i$ according to \eqref{eq:ei} for each $i \in [N]$. Recall that the constant $C_1$ in the definition is chosen so that $\PP\{\mathcal{E}_i\} \ge 1 - \delta/N^2$. Moreover, we define
$$
\tilde S_i = v_i^2 b_i b_i^\top \cdot \mathbf{1}(\mathcal{E}_i) . 
$$

Note that all the off-diagonal entries of $S_i$ have distributions symmetric around zero and all the diagonal entries are identically distributed. The same is true for $\tilde S_i$ as well. Hence we have 
$$
\EE [ S_i ] = \alpha_i I_{n-1} , 
\qquad
\EE [ \tilde S_i ] = \beta_i I_{n-1} 
$$
for scalars $\alpha_i, \beta_i \in \RR$. 
Let us denote the $j$th entry of $b_i$ by $b_{i,j}$ and compute 
\begin{equation*}
\alpha_i = \EE \left[ v_i^2 b_{i,1}^2 \right] 
= \frac{v_i^2}{N} . 
\end{equation*}
Moreover, we have 
\begin{equation*}
\alpha_i - \beta_i = \EE \left[ v_i^2 b_{i,1}^2 \right] - \EE \left[ v_i^2 b_{i,1}^2 \cdot \mathbf{1}(\mathcal{E}_i) \right] 
= v_i^2 \EE \left[ b_{i,1}^2 \cdot \mathbf{1}(\mathcal{E}_i^c) \right] 
\le v_i^2 \EE \left[ \|b_i\|^2 \cdot \mathbf{1}(\mathcal{E}_i^c) \right] .
\end{equation*} 
By Lemma~\ref{lem:chi-sq-exp}, the expectation $\EE \left[ \|b_i\|^2 \cdot \mathbf{1}(\mathcal{E}_i^c) \right]$ is very small in view of our definition of $\mathcal{E}_i$. More precisely, as long as $C_1$ in the definition \eqref{eq:ei} is chosen to be larger than some universal constant, we can make $\EE \left[ \|b_i\|^2 \cdot \mathbf{1}(\mathcal{E}_i^c) \right]$ smaller than $N^{-10}$, so that 
\begin{equation*}
\Delta := \left\| \EE[S_i] - \EE[\tilde S_i] \right\| = \alpha_i - \beta_i 
\le \|v\|_\infty^2 N^{-10} . 
\end{equation*}

On the event $\mathcal{E}_i$ defined by \eqref{eq:ei}, we have 
\begin{align*}
\left\| \tilde S_i - \EE[ \tilde S_i ] \right\| 
&\le \left\| v_i^2 b_i b_i^\top \cdot \mathbf{1}\{\mathcal{E}_i\} \right\| + \| \beta_i I_{n-1} \| \\
&\le C_2 v_i^2 \frac{n + \log(N/\delta)}{N} + |\alpha_i| + |\alpha_i - \beta_i| \\
&\le C_3 \|v\|_\infty^2 \frac{n + \log(N/\delta)}{N} =: L 
\end{align*}
for constants $C_2 , C_3 > 0$.

Next, we bound the variance in the matrix Bernstein inequality. We have 
\begin{equation*}
\EE \left[ \tilde S_i^2 \right] 
= \EE \left[ v_i^4 \|b_i\|^2 b_i b_i^\top \cdot \mathbf{1}(\mathcal{E}_i) \right] 
= \gamma_i I_{n-1}
\end{equation*}
for a scalar $\gamma_i \in \RR$ by the same argument as before. Moreover, 
\begin{equation*}
\gamma_i \le \EE \left[ v_i^4 \|b_i\|^2 b_{i,1}^2 \right] 
= v_i^4 \frac{n+1}{N^2} 
\end{equation*}
by Lemma~\ref{lem:two-exp}. 
Therefore, the variance is 
$$
\sigma^2 
= \left\| \sum_{i=1}^N \left( \EE[ \tilde S_i^2 ] - \EE[ \tilde S_i ]^2 \right) \right\|
= \left\| \sum_{i=1}^N \left( \gamma_i I_{n-1} - \beta_i^2 I_{n-1} \right) \right\|
\le \sum_{i=1}^N \gamma_i 
\le \sum_{i=1}^N v_i^4 \frac{n+1}{N^2} 
= \|v\|_4^4 \frac{n+1}{N^2} . 
$$

Lemma~\ref{lem:tmb} implies that 
\begin{align*}
\left\| \sum_{i=1}^N v_i^2 b_i b_i^\top - \frac{1}{N} I_{n-1} \right\| 
&\le \left\| \sum_{i=1}^N \left( v_i^2 b_i b_i^\top - \frac{v_i^2}{N} I_{n-1} \right) \right\| + \left\| \sum_{i=1}^N \frac{v_i^2}{N} I_{n-1} - \frac{1}{N} I_{n-1} \right\| \\
&= \left\| \sum_{i=1}^N \left( S_i - \EE[S_i] \right) \right\| + \frac{1}{N} \left| \|v\|^2 - 1 \right| \\
&\le C_4 \left( \sigma \sqrt{\log(N/\delta)} + L \log(N/\delta) + N \Delta \right) + \frac{1}{N} \left| \|v\|^2 - 1 \right|
\end{align*}
with probability at least $1 - \delta$ for a constant $C_4>0$. 
Combining this with the above estimates for $\sigma^2$, $L$, and $\Delta$, we complete the proof. 
\end{proof}

\begin{lemma}
\label{lem:l5}
There is a universal constant $C>0$ such that for any $\delta \in (0,1)$, 
\begin{equation*}
\left\| \sum_{i=1}^N \left( \|b_i\|^2 - \frac{n-1}{N} \right) b_i b_i^\top - \frac{2}{N} I_{n-1} \right\| 
\le C \left( \frac{n \sqrt{\log(N/\delta)} }{N^{3/2}} + \frac{ (n \log(N/\delta))^{3/2} + \log^3(N/\delta) }{N^2} \right) 
\end{equation*}
with probability at least $1 - \delta$. 
\end{lemma}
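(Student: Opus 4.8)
The plan is to apply the truncated matrix Bernstein inequality (Lemma~\ref{lem:tmb}) to the symmetric matrices $S_i := \big(\|b_i\|^2 - \tfrac{n-1}{N}\big) b_i b_i^\top$, following the template of the proofs of Lemmas~\ref{lem:l3} and~\ref{lem:l4}. The first step is to verify that the deterministic shift $\tfrac{2}{N} I_{n-1}$ appearing in the statement is exactly $\sum_{i=1}^N \EE[S_i]$. By rotational invariance of the law of $b_i$, $\EE[S_i] = \lambda_i I_{n-1}$ for a scalar $\lambda_i$; writing $b_{i,j}$ for the entries of $b_i$ and using the chi-square moments (Lemma~\ref{lem:two-exp}), $\lambda_i = \EE\big[(\|b_i\|^2 - \tfrac{n-1}{N}) b_{i,1}^2\big] = \tfrac{n+1}{N^2} - \tfrac{n-1}{N^2} = \tfrac{2}{N^2}$, so $\sum_i \EE[S_i] = \tfrac{2}{N} I_{n-1}$. (In contrast to Lemma~\ref{lem:l4}, no residual term of the form $\tfrac1N\big|\|v\|^2-1\big|$ survives here, since no vector $v$ enters the picture.) It then remains to bound $\big\|\sum_i (S_i - \EE[S_i])\big\|$.

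Second, I would carry out the truncation exactly as in Lemma~\ref{lem:l4}. Let $\mathcal{E}_i$ be the event of~\eqref{eq:ei}, with the constant $C_1$ taken large enough that $\PP\{\mathcal{E}_i\} \ge 1 - \delta/N^2$ and, by Lemma~\ref{lem:chi-sq-exp}, that the truncated tail contributes at most $N^{-10}$; put $\tilde S_i := S_i \mathbf{1}\{\mathcal{E}_i\}$. On $\mathcal{E}_i$ one has $\big|\|b_i\|^2 - \tfrac{n-1}{N}\big| \le C\,\tfrac{\sqrt{n \log(N/\delta)} + \log(N/\delta)}{N}$ and $\|b_i\|^2 \le C\,\tfrac{n + \log(N/\delta)}{N}$, so $\|\tilde S_i\| = \big|\|b_i\|^2 - \tfrac{n-1}{N}\big|\cdot\|b_i\|^2$ is at most the product of these; expanding the product and dominating the two cross terms by the two extreme terms via AM--GM yields $\|\tilde S_i\| \le L := C\,\tfrac{n^{3/2}\sqrt{\log(N/\delta)} + \log^2(N/\delta)}{N^2}$, and since $\|\EE[\tilde S_i]\| = O(N^{-2}) \le L$, also $\|\tilde S_i - \EE[\tilde S_i]\| \le 2L$. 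The deterministic correction $\Delta := \|\EE[S_i] - \EE[\tilde S_i]\| = \big|\EE\big[(\|b_i\|^2 - \tfrac{n-1}{N}) b_{i,1}^2 \mathbf{1}\{\mathcal{E}_i^c\}\big]\big|$ is at most $N^{-10}$ by the choice of $C_1$, so $N\Delta$ will be negligible against the asserted bound.

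The crux of the proof is the variance $\sigma^2 := \big\|\sum_i \big(\EE[\tilde S_i^2] - \EE[\tilde S_i]^2\big)\big\|$. Since $\tilde S_i^2 = \big(\|b_i\|^2-\tfrac{n-1}{N}\big)^2 \|b_i\|^2\, b_i b_i^\top\, \mathbf{1}\{\mathcal{E}_i\}$ has a scalar prefactor depending only on $\|b_i\|$, rotational invariance forces $\EE[\tilde S_i^2]$ to be a scalar multiple of $I_{n-1}$, no larger than the scalar multiple $\EE[S_i^2]$, while $\EE[\tilde S_i]^2 = \beta_i^2 I_{n-1}$ with $\beta_i^2 \ge 0$; hence $\sigma^2 \le \sum_i \big(\text{scalar of } \EE[S_i^2]\big)$. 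That scalar equals $\tfrac{1}{n-1}\,\Tr \EE[S_i^2] = \tfrac{1}{(n-1)N^4}\,\EE\big[(\chi_i - (n-1))^2 \chi_i^2\big]$ where $\chi_i := N\|b_i\|^2 \sim \chi^2_{n-1}$, and expanding $\EE[\chi_i^4 - 2(n-1)\chi_i^3 + (n-1)^2\chi_i^2]$ with the moments of $\chi^2_{n-1}$ up to fourth order (Lemma~\ref{lem:two-exp}) gives, after the cancellations, the exact value $\tfrac{2(n+1)(n+11)}{N^4} \le C\,n^2/N^4$; therefore $\sigma^2 \le C\,n^2/N^3$.

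The one genuinely delicate point — and the step I expect to be the main obstacle — is precisely this variance estimate: one must (i) use that $\EE[S_i^2]$ is a scalar multiple of $I_{n-1}$, so that its operator norm is $\tfrac{1}{n-1}\Tr\EE[S_i^2]$, a factor of order $n$ smaller than the crude bound $\EE\|S_i\|^2$, \emph{and} (ii) evaluate the resulting degree-four chi-square moment while tracking the cancellation that keeps the scalar at $O(n^2/N^4)$ rather than $O(n^3/N^4)$; a less careful bound on $\sigma^2$ loses a factor of order $n\log(N/\delta)$ and falls short of the claimed estimate. Granting this, the remaining assembly is routine bookkeeping of the kind already done in Lemmas~\ref{lem:l3} and~\ref{lem:l4}: substituting $\sigma^2 \le C n^2/N^3$, the bound on $L$, and $N\Delta \le N^{-9}$ into Lemma~\ref{lem:tmb} gives, with probability at least $1-\delta$, $\big\|\sum_i(S_i - \EE[S_i])\big\| \le C\big(\sigma\sqrt{\log(N/\delta)} + L\log(N/\delta) + N\Delta\big)$, and the three substitutions produce exactly the terms $\tfrac{n\sqrt{\log(N/\delta)}}{N^{3/2}}$ and $\tfrac{(n\log(N/\delta))^{3/2} + \log^3(N/\delta)}{N^2}$ in the statement (using $L\log(N/\delta) = C\,\tfrac{n^{3/2}\log^{3/2}(N/\delta) + \log^3(N/\delta)}{N^2}$ and absorbing $N\Delta$ into the first term).
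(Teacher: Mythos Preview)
Your proposal is correct and follows essentially the same approach as the paper's proof: both apply the truncated matrix Bernstein inequality (Lemma~\ref{lem:tmb}) to $S_i = (\|b_i\|^2 - \tfrac{n-1}{N}) b_i b_i^\top$ with the same truncation event~\eqref{eq:ei}, compute $\EE[S_i] = \tfrac{2}{N^2} I_{n-1}$, bound $L$ and $\Delta$ identically, and obtain the same variance scalar $\tfrac{2(n+1)(n+11)}{N^4}$. The only cosmetic difference is that you extract this scalar via $\tfrac{1}{n-1}\Tr\EE[S_i^2]$ and chi-square moments of $\chi_i = N\|b_i\|^2$, whereas the paper reads off the single diagonal entry $\EE[(\|b_i\|^2 - \tfrac{n-1}{N})^2 \|b_i\|^2 b_{i,1}^2]$ using the second formula of Lemma~\ref{lem:two-exp}; the two computations are equivalent.
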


\begin{proof}
The proof is very similar to that of the previous lemma, but let us present it for completeness. We will again apply the matrix Bernstein inequality with truncation (Lemma~\ref{lem:tmb}) with symmetric matrices
$S_i = \left( \|b_i\|^2 - \frac{n-1}{N} \right) b_i b_i^\top$. 
Let us define the event $\mathcal{E}_i$ according to \eqref{eq:ei} for each $i \in [N]$. Recall that the constant $C_1$ in the definition is chosen so that $\PP\{\mathcal{E}_i\} \ge 1 - \delta/N^2$. Moreover, we define
$$
\tilde S_i = \left( \|b_i\|^2 - \frac{n-1}{N} \right) b_i b_i^\top \cdot \mathbf{1}(\mathcal{E}_i) . 
$$

Note that all the off-diagonal entries of $S_i$ have distributions symmetric around zero and all the diagonal entries are identically distributed. The same is true for $\tilde S_i$ as well. Hence we have 
$$
\EE [ S_i ] = \alpha I_{n-1} , \qquad
\EE [ \tilde S_i ] = \beta I_{n-1} 
$$
for scalars $\alpha, \beta \in \RR$. 
Let us denote the $j$th entry of $b_i$ by $b_{i,j}$ and compute 
\begin{equation*}
\alpha = \EE \left[ \left( \|b_i\|^2 - \frac{n-1}{N} \right) b_{i,1}^2 \right]
= \EE \left[ \|b_i\|^2  b_{i,1}^2 \right] 
- \frac{n-1}{N} \EE \left[ b_{i,1}^2 \right]
= \frac{n+1}{N^2}  - \frac{n-1}{N^2}  
= \frac{2}{N^2} 
\end{equation*}
using Lemma~\ref{lem:two-exp}. 
Moreover, we have 
\begin{equation*}
\alpha - \beta = \EE \left[ \left( \|b_i\|^2 - \frac{n-1}{N} \right) b_{i,1}^2 \right] - \EE \left[ \left( \|b_i\|^2 - \frac{n-1}{N} \right) b_{i,1}^2 \cdot \mathbf{1}(\mathcal{E}_i) \right] 
= \EE \left[ \left( \|b_i\|^2 - \frac{n-1}{N} \right) b_{i,1}^2 \cdot \mathbf{1}(\mathcal{E}_i^c) \right] ,
\end{equation*} 
so 
\begin{align*}
\left| \alpha - \beta \right| 
&\le \EE \left[ \|b_i\|^2 b_{i,1}^2 \cdot \mathbf{1}(\mathcal{E}_i^c) \right] + \frac{n-1}{N} \EE \left[ b_{i,1}^2 \cdot \mathbf{1}(\mathcal{E}_i^c) \right] \\
&= \frac{1}{n-1} \sum_{j=1}^{n-1} \left( \EE \left[ \|b_i\|^2 b_{i,j}^2 \cdot \mathbf{1}(\mathcal{E}_i^c) \right] + \frac{n-1}{N} \EE \left[ b_{i,j}^2 \cdot \mathbf{1}(\mathcal{E}_i^c) \right] \right) \\
&= \frac{1}{n-1} \left( \EE \left[ \|b_i\|^4\cdot \mathbf{1}(\mathcal{E}_i^c) \right] + \frac{n-1}{N} \EE \left[ \|b_i\|^2 \cdot \mathbf{1}(\mathcal{E}_i^c) \right] \right) . 
\end{align*}
By Lemma~\ref{lem:chi-sq-exp}, the quantities $\EE \left[ \|b_i\|^4\cdot \mathbf{1}(\mathcal{E}_i^c) \right]$ and $\EE \left[ \|b_i\|^2 \cdot \mathbf{1}(\mathcal{E}_i^c) \right]$ are very small in view of our definition of $\mathcal{E}_i$. More precisely, 
as long as $C_1$ in the definition \eqref{eq:ei} is chosen to be larger than some universal constant, we can make both $\EE \left[ \|b_i\|^4\cdot \mathbf{1}(\mathcal{E}_i^c) \right]$ and $\EE \left[ \|b_i\|^2 \cdot \mathbf{1}(\mathcal{E}_i^c) \right]$ smaller than $\frac{1}{2} \cdot N^{-10}$, so that 
\begin{equation*}
\Delta := \left\| \EE[S_i] - \EE[\tilde S_i] \right\| = \left| \alpha - \beta \right| 
\le N^{-10} . 
\end{equation*}

On the event $\mathcal{E}_i$ defined by \eqref{eq:ei}, we have 
\begin{align*}
\left\| \tilde S_i - \EE[ \tilde S_i ] \right\| 
&\le \left\| \left( \|b_i\|^2 - \frac{n-1}{N} \right) b_i b_i^\top \cdot \mathbf{1}\{\mathcal{E}_i\} \right\| + \| \beta I_{n-1} \| \\
&\le C_2 \frac{\sqrt{n \log(N/\delta)} + \log(N/\delta)}{N} \cdot \frac{n + \log(N/\delta)}{N} + |\alpha| + |\alpha - \beta| \\
&\le C_3 \frac{ n^{3/2} \sqrt{\log(N/\delta)} + \log^2(N/\delta) }{N^2} =: L 
&\end{align*}
for constants $C_2 , C_3 > 0$.

Next, we bound the variance in the matrix Bernstein inequality. We have 
\begin{equation*}
\EE \left[ \tilde S_i^2 \right] 
= \EE \left[ \left( \|b_i\|^2 - \frac{n-1}{N} \right)^2 \|b_i\|^2 b_i b_i^\top \cdot \mathbf{1}(\mathcal{E}_i) \right] 
= \gamma I_{n-1}
\end{equation*}
for a scalar $\gamma \in \RR$ by the same argument as before. Moreover, 
\begin{align*}
\gamma &\le \EE \left[ \left( \|b_i\|^2 - \frac{n-1}{N} \right)^2 \|b_i\|^2 b_{i,1}^2 \right] \\
&=  \EE \left[ \|b_i\|^6 b_{i,1}^2 \right]
- 2 \frac{n-1}{N} \EE \left[ \|b_i\|^4 b_{i,1}^2 \right]
+ \frac{(n-1)^2}{N^2} \EE \left[ \|b_i\|^2 b_{i,1}^2 \right] \\
&= \frac{1}{N^4} \left[ (n+1)(n+3)(n+5) - 2(n-1)(n+1)(n+3) + (n-1)^2 (n+1) \right] \\
&= \frac{2(n+1)(n+11)}{N^4} 
\end{align*}
by Lemma~\ref{lem:two-exp}. 
Therefore, the variance is 
$$
\sigma^2 
= \left\| \sum_{i=1}^N \left( \EE[ \tilde S_i^2 ] - \EE[ \tilde S_i ]^2 \right) \right\|
= \left\| N \left( \gamma I_{n-1} - \beta^2 I_{n-1} \right) \right\|
\le N \gamma 
\le \frac{2(n+1)(n+11)}{N^3} . 
$$

Lemma~\ref{lem:tmb} implies that 
$$
\left\| \sum_{i=1}^N \left( \|b_i\|^2 - \frac{n-1}{N} \right) b_i b_i^\top - \frac{2}{N} I_{n-1} \right\| 
= \left\| \sum_{i=1}^N \left( S_i - \EE[S_i] \right) \right\| 
\le C_4 \left( \sigma \sqrt{\log(N/\delta)} + L \log(N/\delta) + N \Delta \right)
$$
with probability at least $1 - \delta$ for a constant $C_4>0$. 
Combining this with the above estimates for $\sigma^2$, $L$, and $\Delta$, we complete the proof. 
\end{proof}

\begin{lemma}
\label{lem:two-exp}
Given a Gaussian vector $g \sim \mathcal{N}(0, I_n)$, we have that, for any positive integer $k$, 
$$
\EE \left[ \|g\|^{2k} \right] = n (n+2) (n+4) \cdots (n+2k-2) 
$$
and 
$$
\EE \left[ \|g\|^{2k-2} g_1^2 \right] = (n+2) (n+4) \cdots (n+2k-2) .
$$
\end{lemma}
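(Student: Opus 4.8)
The plan is to reduce both identities to the moment formula for the chi-square (equivalently Gamma) distribution. Since the coordinates $g_1, \dots, g_n$ are i.i.d.\ standard Gaussians, $\|g\|^2 = \sum_{i=1}^n g_i^2$ has the $\chi^2_n$ distribution, which coincides with the Gamma distribution of shape $n/2$ and scale $2$. For a Gamma$(\alpha, \theta)$ random variable $X$ and any positive integer $k$, integrating the density against $x^k$ and recognizing the resulting integral as a rescaled $\Gamma(\alpha + k)$ gives $\EE[X^k] = \theta^k\,\Gamma(\alpha + k)/\Gamma(\alpha) = \theta^k\,\alpha(\alpha+1)\cdots(\alpha+k-1)$. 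Substituting $\alpha = n/2$ and $\theta = 2$ yields
$$
\EE\!\left[\|g\|^{2k}\right] = 2^k \cdot \frac{n}{2}\Big(\frac n2 + 1\Big)\cdots\Big(\frac n2 + k - 1\Big) = n(n+2)(n+4)\cdots(n+2k-2),
$$
which is the first claimed identity.

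For the second identity I would use the exchangeability of the coordinates. Since $\|g\|^{2k-2}g_j^2$ has the same distribution for every $j \in [n]$, linearity of expectation gives
$$
\EE\!\left[\|g\|^{2k-2} g_1^2\right] = \frac1n \sum_{j=1}^n \EE\!\left[\|g\|^{2k-2} g_j^2\right] = \frac1n\,\EE\!\left[\|g\|^{2k-2} \sum_{j=1}^n g_j^2\right] = \frac1n\,\EE\!\left[\|g\|^{2k}\right],
$$
and combining with the first part,
$$
\EE\!\left[\|g\|^{2k-2} g_1^2\right] = \frac1n \cdot n(n+2)(n+4)\cdots(n+2k-2) = (n+2)(n+4)\cdots(n+2k-2).
$$

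Alternatively, if one prefers not to invoke the Gamma moment formula, the product expression can be derived from the recursion $\EE[\|g\|^{2k}] = (n + 2k - 2)\,\EE[\|g\|^{2k-2}]$, valid for $k \ge 2$, with base case $\EE[\|g\|^2] = n$. This recursion follows from Gaussian integration by parts (Stein's identity): writing $\EE[\|g\|^{2k}] = n\,\EE[g_1 \cdot (g_1 \|g\|^{2k-2})]$ by symmetry and differentiating $g_1\|g\|^{2k-2}$ with respect to $g_1$ gives $\EE[\|g\|^{2k}] = n\,\EE[\|g\|^{2k-2}] + (2k-2)\,\EE[g_1^2 \|g\|^{2k-4}]$; then applying $\EE[g_1^2\|g\|^{2k-4}] = \frac1n \EE[\|g\|^{2k-2}]$ (again by symmetry) closes the recursion, and iterating gives the product.

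This lemma presents essentially no obstacle; both routes are short and standard. The only point that deserves a moment of care is the index bookkeeping: the second product starts at $n+2$ rather than $n$, which is exactly the factor $1/n$ coming from the symmetrization step $\EE[\|g\|^{2k-2}g_1^2] = \frac1n\EE[\|g\|^{2k}]$, and one should also note the empty-product conventions so that both formulas are correct at $k = 1$ (where they read $\EE[\|g\|^2] = n$ and $\EE[g_1^2] = 1$).
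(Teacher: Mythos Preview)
Your proof is correct and follows essentially the same approach as the paper: the first identity is the chi-square moment formula (you spell out the Gamma computation, the paper simply cites it), and the second identity comes from the same symmetrization step $n\,\EE[\|g\|^{2k-2}g_1^2] = \EE[\|g\|^{2k}]$. Your alternative via Stein's identity is a nice addition but not needed for the match.
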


\begin{proof}
The first claim is simply the formula for the $k$th moment of the chi-square distribution with $n$ degrees of freedom. 
For the second claim, note that 
$$
n \EE \left[ \|g\|^{2k-2} g_1^2 \right] = \sum_{i=1}^n \EE \left[ \|g\|^{2k-2} g_i^2 \right] = \EE \left[ \|g\|^{2k} \right] ,
$$
so the result follows from the first claim.
\end{proof}

\begin{lemma}
\label{lem:chi-sq-exp}
Let $X$ be a chi-square random variable with $n$ degrees of freedom. For $B>0$, define an event  
$$
\mathcal{E} = \left\{ |X - n| \le B \right\} . 
$$
Then there exist universal constants $C, c > 0$ such that 
$$
\EE \left[ X^2 \cdot \mathbf{1}\{\mathcal{E}^c\} \right] \le C \left( n^2 e^{-c B^2/n} + (B \lor n) e^{-c (B \lor n) } \right) 
$$
and 
$$
\EE \left[ X \cdot \mathbf{1}\{\mathcal{E}^c\} \right] \le C \left( n e^{-c B^2/n} + e^{-c (B \lor n) } \right)  .
$$
\end{lemma}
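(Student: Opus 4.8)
The plan is to combine the tail bound for the chi-square distribution (Lemma~\ref{lem:chi-sq}) with a crude bound on the moments of $X$ via the Cauchy--Schwarz inequality. Concretely, Lemma~\ref{lem:chi-sq} supplies a universal constant $c_0 \in (0,1)$ with $\PP\{X - n \ge s\} \le \exp(-c_0(\frac{s^2}{n} \land s))$ and $\PP\{X - n \le -s\} \le \exp(-c_0 s^2/n)$ for all $s > 0$; since $X \ge 0$ the lower tail is vacuous once $s \ge n$, so $\PP\{\mathcal{E}^c\} = \PP\{|X - n| > B\} \le 2\exp(-c_0(\frac{B^2}{n} \land B))$. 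For the moments I would invoke Lemma~\ref{lem:two-exp} (with $X = \|g\|^2$, $g \sim \mathcal{N}(0, I_n)$), which gives $\EE[X^2] = n(n+2) \le C n^2$ and $\EE[X^4] = n(n+2)(n+4)(n+6) \le C n^4$ using $n \ge 1$.

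Then Cauchy--Schwarz (applied to the factors $X^2$ and $\mathbf{1}\{\mathcal{E}^c\}$, using $\mathbf{1}^2 = \mathbf{1}$) yields $\EE[X^2 \mathbf{1}\{\mathcal{E}^c\}] \le \sqrt{\EE[X^4]}\sqrt{\PP\{\mathcal{E}^c\}} \le C n^2 \exp(-\tfrac{c_0}{2}(\frac{B^2}{n} \land B))$, and likewise $\EE[X \mathbf{1}\{\mathcal{E}^c\}] \le C n \exp(-\tfrac{c_0}{2}(\frac{B^2}{n} \land B))$. It then remains to rewrite these in the two-term form of the statement by splitting into the cases $B \le n$ and $B > n$. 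When $B \le n$ one has $\frac{B^2}{n} \land B = \frac{B^2}{n}$, so the bounds are already $C n^2 e^{-cB^2/n}$ and $C n e^{-cB^2/n}$, which are dominated by the claimed right-hand sides. When $B > n$ one has $\frac{B^2}{n} \land B = B$ and $B \lor n = B$, so the bounds read $C n^2 e^{-c_0 B/2}$ and $C n e^{-c_0 B/2}$; here I would use $n \le B$ to replace $n^2$ by $B^2$ (resp.\ $n$ by $B$) and then peel off part of the exponential via $B^2 e^{-c_0 B/2} = B \cdot (B e^{-c_0 B/4}) \cdot e^{-c_0 B/4} \le \frac{4}{c_0 e}\, B e^{-c_0 B/4}$ and $B e^{-c_0 B/2} \le \frac{4}{c_0 e}\, e^{-c_0 B/4}$, obtaining bounds of the form $C'(B \lor n)e^{-c(B\lor n)}$ and $C' e^{-c(B\lor n)}$. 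Taking $c = c_0/4$ uniformly (which only weakens the $B \le n$ case) gives both claimed inequalities.

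The one place requiring care is this last step: in the regime $B > n$ the Cauchy--Schwarz estimate carries a spurious polynomial-in-$n$ prefactor, and absorbing it into the exponential forces the decay rate $c$ in the statement to be taken strictly below the rate $c_0$ furnished by Lemma~\ref{lem:chi-sq}. This is harmless but is the reason the bound is not simply of the form $C\exp(-c(\cdots))$; I expect no genuine obstacle. As an alternative route that avoids Cauchy--Schwarz, one could bound the upper- and lower-tail contributions directly through the layer-cake identity $\EE[X^2 \mathbf{1}\{X > a\}] = a^2 \PP\{X > a\} + 2\int_a^\infty t\,\PP\{X > t\}\,dt$ (and its analogue with a single power of $X$), substituting $s = t - n$ and splitting the range of integration at $s = n$ to match the sub-Gaussian and sub-exponential regimes of the chi-square tail; the two elementary integrals that result reproduce exactly the terms $n^2 e^{-cB^2/n}$ (resp.\ $n e^{-cB^2/n}$) and $(B \lor n)e^{-c(B\lor n)}$ (resp.\ $e^{-c(B\lor n)}$).
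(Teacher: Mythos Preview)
Your proof is correct and takes a genuinely different route from the paper's. The paper argues directly via the layer-cake representation: it splits into $B \ge n$ and $B < n$, and in each case writes $\EE[X^2 \mathbf{1}\{X \ge n+B\}]$ as an integral of $2(n+t)\,\PP\{X \ge n+t\}$ over $t \ge B$ (plus the obvious bound $\EE[X^2 \mathbf{1}\{X \le n-B\}] \le n^2 \PP\{X \le n-B\}$ for the lower tail when $B < n$), inserting the chi-square tail and splitting the range of integration at $t = n$. This is exactly the ``alternative route'' you sketch at the end of your proposal. Your primary argument instead uses Cauchy--Schwarz together with the exact fourth (resp.\ second) moment of the chi-square from Lemma~\ref{lem:two-exp}, which is shorter and completely mechanical; the price is the halving of the exponent and the need to re-absorb the polynomial prefactor in the $B > n$ regime, which you handle cleanly. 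The paper's approach yields slightly sharper implicit constants and avoids the fourth-moment detour, but both are perfectly adequate for the statement.
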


\begin{proof}
If $B \ge n$, then we have 
$$
\EE \left[ X^2 \cdot \mathbf{1} \left\{ X \le n - B \right\} \right] 
= 0 
$$
and 
\begin{equation*}
\EE \left[ X^2 \cdot \mathbf{1} \left\{ X \ge n + B \right\} \right] 
= \int_{B}^\infty 2 (n+t) \cdot \PP \left\{ X \ge n + t \right\} \, dt 
\le C_1 \int_B^\infty (n+t) \exp \left( - c_1 t \right) \, dt 
\le C_2 B e^{-c_1 B} 
\end{equation*}
for universal constants $C_1, C_2, c_1 > 0$. 

If $B < n$, then we have 
$$
\EE \left[ X^2 \cdot \mathbf{1} \left\{ X \le n - B \right\} \right] 
\le n^2 \PP \left\{ X \le n - B \right\} 
\le n^2 e^{-c_2 B^2/n} 
$$
for a universal constant $c_2 > 0$, and 
\begin{align*}
\EE \left[ X^2 \cdot \mathbf{1} \left\{ X \ge n + B \right\} \right] 
&= \int_{B}^n 2 (n+t) \cdot \PP \left\{ X \ge n + t \right\} \, dt
+ \int_{n}^\infty 2 (n+t) \cdot \PP \left\{ X \ge n + t \right\} \, dt \\
&\le C_3 n^2 e^{-c_3 B^2/n} + C_3 n e^{-c_3 n} 
\end{align*}
for universal constants $C_3, c_3 > 0$. 

Combining the two parts finishes the proof of the first bound. 
The second bound is proved similarly. 
\end{proof}

\subsection{Bounds in the Euclidean Norm}
We first establish an error bound in the Euclidean norm for our spectral estimator. Although this result is essentially subsumed by the entrywise error bound in Theorem~\ref{thm:entrywise}, we hope the much simpler proof provides clearer intuition. 

\begin{theorem}
\label{thm:gaussian-general}
Assume Model~\ref{mod:gauss}. 
Let $\tilde{\ub}$ be the leading eigenvector of the matrix $\Mt$ defined in \eqref{eq:def-mt}. 
There exists a universal constant $C > 0$ such that the following holds. Let $\delta \in (0,1)$ and suppose that $N \ge \log^7(N/\delta)$. Then, up to a sign flip of $\tilde{\ub}$, we have
\begin{equation*}
\|\Yt \tilde{\ub} - v\| \le C (\|v\| + 1) \cdot \frac{ \eta }{ \left| \|v\|_4^4 - \frac{3}{N} \right|} 
\end{equation*}
with probability at least $1-\delta$, where $\eta = \eta(N,n,v,\delta)$ is defined in \eqref{eq:def-eta}. 
\end{theorem}

\begin{proof}
Recall Model~\ref{mod:gauss} and the definition of $\Mt$ in \eqref{eq:def-mt}. 
Let $\yg_i^\top$ and $\yt_i^\top$ be the $i$th row of $\Yg$ and $\Yt$ respectively, for $i \in [N]$. 
Then we have $\yt_i = Q^\top \yg_i$. 
Furthermore, let 
$$
\Mg = Q \Mt Q^\top = \sum_{i=1}^N \left( \|\yt_i\|^2 - \frac{n-1}{N} \right) Q \yt_i \yt_i^\top Q^\top - \frac{3}{N} I_n
= \sum_{i=1}^N \left( \|\yg_i\|^2 - \frac{n-1}{N} \right) \yg_i \yg_i^\top - \frac{3}{N} I_n 
$$
which is precisely the matrix defined in \eqref{eq:def-mg}, 
and let $\ub$ be the leading eigenvector of $\Mg$ so that $\tilde{\ub} = Q^\top \ub$. 
Then we have 
$$
\|\Yt \tilde{\ub} - v\| 
= \|\Yg \ub - \Yg e_1\| 
\le \|\Yg\| \cdot \|\ub - e_1\| 
\le (\|v\| + \|\Yg_{-1}\|) \|\ub - e_1\| ,
$$
where $\Yg_{-1} \in \RR^{N \times (n-1)}$ denotes the submatrix of $\Yg$ with its first column removed. 
The Davis--Kahan theorem (Lemma~\ref{lem:dk}) and Proposition~\ref{prop:gauss-spec-norm} together imply that, up to a sign flip of $\ub$, 
\begin{equation*}
\|\ub - e_1\| 
\le \frac{ 2\sqrt{2} }{ \left| \|v\|_4^4 - \frac{3}{N} \right|} 
\cdot \left\| \Mg -  \left( \|v\|_4^4 - \frac{3}{N} \right) e_1 e_1^\top \right\| 
\le \frac{ 2\sqrt{2} \, \eta }{ \left| \|v\|_4^4 - \frac{3}{N} \right|} 
\end{equation*}
with probability at least $1-\delta$, 
where $\eta$ is defined in \eqref{eq:def-eta}. 
Moreover, by a standard bound on the spectral norm of a matrix with Gaussian entries (see, for example, Theorem~4.4.5 of \cite{vershynin2018high}), we have 
\begin{equation}\label{eq:Y-1-norm-bound}
\|\Yg_{-1}\| \le C \bigg( 1 + \sqrt{\frac{\log(1/\delta)}{N}} \, \bigg) \le 2 C
\end{equation}
with probability at least $1-\delta$ for a constant $C>0$. Combining the above three bounds completes the proof.
\end{proof}

\subsection{Proof of Lemma~\ref{lem:change-norm}}
\label{sec:change-norm}
By the assumption $\left|\|v\|_4^4 - \frac{3}{N} \right| \ge \frac{c}{N}$, it holds that
$$
\left|\|v\|_4^4 - \frac{3}{N} \right| \ge \left|\|v\|_4^4 - \frac{3}{N} \right| \cdot \frac{\|v\|_4^4}{\frac{3}{N} + \left|\|v\|_4^4 - \frac{3}{N} \right|} \ge \inf_{t \ge \frac{c}{N}} \frac{t}{\frac{3}{N} + t} \|v\|_4^4 = \frac{c}{3+c} \|v\|_4^4 .
$$
Consequently, it suffices to bound 
\begin{equation*}
\frac{3+c}{c} \bigg( \frac{1}{ \|v\|_4^4 } \frac{n}{N^{3/2}} 
+ \frac{1}{ \|v\|_4^2 } \frac{\sqrt{n}}{N} 
+ \frac{\|v\|_6^3}{\|v\|_4^4} \sqrt{\frac{n}{N}}
+ \frac{\|v\|_\infty^2}{ \|v\|_4^4 } \frac{n}{N} 
\bigg) .
\end{equation*}
We have 
\begin{itemize}
\item
$\frac{1}{ \|v\|_4^2 } \frac{\sqrt{n}}{N} \le \frac{\|v\|_\infty}{ \|v\|_4^2 } \sqrt{\frac{n}{N}}$ since $\|v\|_\infty \ge \frac{1}{\sqrt{N}}$ for any unit vector $v$;

\item
$\frac{\|v\|_6^3}{\|v\|_4^4} \sqrt{\frac{n}{N}} \le \frac{\|v\|_\infty}{ \|v\|_4^2 } \sqrt{\frac{n}{N}}$
since $\|v\|_6^3 \le \|v\|_\infty \|v\|_4^2$;

\item
$\frac{\|v\|_\infty^2}{ \|v\|_4^4 } \frac{n}{N} \le \frac{\|v\|_\infty}{ \|v\|_4^2 } \sqrt{\frac{n}{N}}$ since (due to the fact $\frac{9+3c}{c} \ge 3$) it is assumed that $\frac{\|v\|_\infty}{ \|v\|_4^2 } \sqrt{\frac{n}{N}} \le 1$.
\end{itemize}
Combining the above bounds completes the proof.

\subsection{Rank-One Perturbation}
\label{sec:rank-one-perturb-pf}

As a preparation for proving our upper bounds, we state a bound for eigenvector perturbation under a rank-one update. 
Recall that for a symmetric matrix, the singular values are simply the absolute values of the eigenvalues.
The following form of the Davis--Kahan theorem for rank-one perturbation is known; for example, see the original proof by Davis and Kahan \cite{davis1970rotation}, or more recent versions such as Theorem~8.5 of \cite{wainwright2019high} and Theorem~2.2.1 of \cite{chen2020spectral}. 
We provide a self-contained proof for completeness.

\begin{lemma}
\label{lem:rank-one-perturb}
Fix a symmetric matrix $A \in \RR^{n \times n}$, a scalar $\rho \in \RR$, and a vector $b \in \RR^{n \times n}$. 
Let $(\lambda_1, u_1)$ be the leading eigenpair of $A$.
Let $\Delta > 0$ denote the gap (i.e., absolute difference) between the largest and the second largest singular value of $A$. 
Let $\tilde u_1$ be the leading eigenvector of $A+ \rho bb^\top$. 
Suppose that $|\rho| \cdot \|b\|^2 \le \Delta/4$. 
Up to a sign flip of $\tilde u_1$, we have 
$$
\|u_1 - \tilde u_1\|
\le 2 \sqrt{2} \, \frac{|\rho| \cdot \|b\| \cdot |b^\top u_1|}{\Delta} .
$$
\end{lemma}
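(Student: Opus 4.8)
\emph{Proof strategy.} The plan is to run the classical $\sin\Theta$-type perturbation argument, but tailored to the rank-one update so that the direction $b$ enters only through the scalar $b^\top\tilde u_1$, and then to upgrade this to the stated bound (with $b^\top u_1$ in place of $b^\top\tilde u_1$) by a self-bounding step that exploits the smallness assumption $|\rho|\,\|b\|^2\le\Delta/4$. By replacing $(A,\rho)$ with $(-A,-\rho)$ if necessary — which changes neither $\Delta$, nor $u_1$, nor $\tilde u_1$, nor any quantity in the claimed bound — I may assume $\lambda_1>0$; note that $\Delta>0$ forces $\lambda_1\ge\Delta>0$ and that $\lambda_1$ is a simple eigenvalue. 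After fixing the sign of $\tilde u_1$ so that $\alpha:=u_1^\top\tilde u_1\ge 0$, I would write $\tilde u_1=\alpha u_1+w$ with $w\perp u_1$. Since $\|u_1\|=\|\tilde u_1\|=1$ we have $\alpha\in[0,1]$ and $\alpha^2+\|w\|^2=1$, so $\|u_1-\tilde u_1\|^2=2(1-\alpha)\le 2(1-\alpha^2)=2\|w\|^2$; thus it suffices to prove $\|w\|\le\sqrt 2\,|\rho|\,\|b\|\,|b^\top u_1|/\Delta$.

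\emph{A fixed-point equation for $w$.} Writing $\tilde A:=A+\rho bb^\top$ and $P^\perp:=I_n-u_1u_1^\top$, I would apply $P^\perp$ to the eigen-relation $A\tilde u_1+\rho\,(b^\top\tilde u_1)\,b=\tilde\lambda_1\tilde u_1$. Using that $A$ leaves the subspace $u_1^\perp$ invariant (so that $P^\perp A\tilde u_1=Aw$) and that $P^\perp\tilde u_1=w$, this yields
\[
\big(A-\tilde\lambda_1 I_n\big)\big|_{u_1^\perp}w \;=\; -\,\rho\,(b^\top\tilde u_1)\,P^\perp b ,
\]
an identity living entirely inside the invariant subspace $u_1^\perp$. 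Hence, once we know the restriction $(A-\tilde\lambda_1 I_n)|_{u_1^\perp}$ is invertible, we get $\|w\|\le\big\|(A-\tilde\lambda_1 I_n)|_{u_1^\perp}^{-1}\big\|\cdot|\rho|\,|b^\top\tilde u_1|\,\|b\|$, using $\|P^\perp b\|\le\|b\|$.

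\emph{Localizing $\tilde\lambda_1$.} By Weyl's inequality the sorted eigenvalues of $\tilde A$ lie within $\|\rho bb^\top\|=|\rho|\,\|b\|^2\le\Delta/4$ of those of $A$. Since $\lambda_1$ is the top eigenvalue of $A$ and every other eigenvalue has magnitude at most $\lambda_1-\Delta$, the largest-magnitude eigenvalue of $\tilde A$ must be the one near $\lambda_1$: it is $\ge\lambda_1-\Delta/4\ge\frac34\Delta>0$, whereas every other eigenvalue of $\tilde A$ has magnitude at most $(\lambda_1-\Delta)+\Delta/4=\lambda_1-\frac34\Delta$. Therefore $\tilde\lambda_1>0$ and $|\tilde\lambda_1-\lambda_1|\le\Delta/4$, so for each eigenvalue $\lambda_i$ of $A$ supported on $u_1^\perp$ we obtain $\tilde\lambda_1-\lambda_i\ge(\lambda_1-\Delta/4)-(\lambda_1-\Delta)=\frac34\Delta$. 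Consequently $\big\|(A-\tilde\lambda_1 I_n)|_{u_1^\perp}^{-1}\big\|\le\frac{4}{3\Delta}$, and so $\|w\|\le\frac{4}{3\Delta}\,|\rho|\,|b^\top\tilde u_1|\,\|b\|$.

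\emph{The bootstrap and conclusion.} From $\tilde u_1=\alpha u_1+w$ and $|\alpha|\le 1$ we have $|b^\top\tilde u_1|\le|b^\top u_1|+\|b\|\,\|w\|$, so
\[
\|w\|\;\le\;\frac{4|\rho|\,\|b\|}{3\Delta}\Big(|b^\top u_1|+\|b\|\,\|w\|\Big) ,
\]
and since $\frac{4|\rho|\,\|b\|^2}{3\Delta}\le\frac13$ by hypothesis, rearranging gives $\|w\|\le\frac{2|\rho|\,\|b\|\,|b^\top u_1|}{\Delta}$. Combined with $\|u_1-\tilde u_1\|\le\sqrt2\,\|w\|$ this is exactly the claimed inequality. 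The step I expect to need the most care is the localization of $\tilde\lambda_1$: one must rule out that the leading eigenvalue of $\tilde A$ sits near some $\lambda_i$ with $i\ge 2$ rather than near $\lambda_1$, and this is precisely where the quantitative assumption $|\rho|\,\|b\|^2\le\Delta/4$ is used (a second time it is used in the bootstrap to absorb the $\|w\|$ term). Everything else is routine; the only genuinely nonstandard ingredient is the self-bounding move, which is what lets a factor $\|b\|$ be replaced by $|b^\top u_1|$.
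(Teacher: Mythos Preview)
Your proof is correct. Both you and the paper project the eigen-equation for $\tilde u_1$ onto $u_1^\perp$ and use Weyl's inequality to localize $\tilde\lambda_1$, but you diverge at the key step. The paper splits $b^\top\tilde u_1$ via $I_n=U_{-1}U_{-1}^\top+u_1u_1^\top$ and moves the $U_{-1}$-piece to the left, obtaining the operator $\tilde\lambda_1 I_{n-1}-\Lambda_{-1}-\rho\,U_{-1}^\top bb^\top U_{-1}$; this costs an extra $|\rho|\,\|b\|^2$ in the spectral lower bound (hence only $\Delta/2$ rather than your $3\Delta/4$), but the right-hand side carries $|b^\top u_1|\cdot|u_1^\top\tilde u_1|\le|b^\top u_1|$ directly, with no further work. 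You instead keep $b^\top\tilde u_1$ on the right, get the sharper resolvent bound $4/(3\Delta)$, and then run the self-bounding step $|b^\top\tilde u_1|\le|b^\top u_1|+\|b\|\,\|w\|$ to absorb the $\|w\|$ term, spending a factor $3/2$. The two routes land on the identical final constant $2\sqrt{2}$. The paper's approach is a one-shot argument; yours is more modular and makes transparent exactly where the hypothesis $|\rho|\,\|b\|^2\le\Delta/4$ is used twice (once for localization, once for the bootstrap).
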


\begin{proof}
Let $A = U \Lambda U^\top$ be the eigendecomposition of $A$, and let $(\tilde \lambda_1, \tilde u_1)$ be the leading eigenpair of $A + \rho b b^\top$. Then we have 
$$
\tilde \lambda_1 \tilde u_1 
= (A + \rho bb^\top) \tilde u_1 
= A \tilde u_1 + \rho b  b^\top \tilde u_1 .
$$
Note that $I_n = U_{-1} U_{-1}^\top + u_1 u_1^\top$ where $U_{-1} \in \RR^{n \times (n-1)}$ denotes the submatrix of $U$ with its first column $u_1$ removed. 
Multiplying the above equation by $U_{-1}^\top$ from the left yields
\begin{align*}
\tilde \lambda_1 U_{-1}^\top \tilde u_1 
&= \Lambda_{-1} U_{-1}^\top \tilde u_1 + \rho U_{-1}^\top b b^\top (U_{-1} U_{-1}^\top + u_1 u_1^\top) \tilde u_1 \\
&= \Lambda_{-1} U_{-1}^\top \tilde u_1 + \rho U_{-1}^\top b b^\top U_{-1} U_{-1}^\top \tilde u_1 + \rho U_{-1}^\top b b^\top u_1 u_1^\top \tilde u_1 ,
\end{align*}
where $\Lambda_{-1} \in \RR^{(n-1) \times (n-1)}$ denotes the principal submatrix of $\Lambda$ with its first row and first column removed. 
Rearranging terms, we obtain
$$
( \tilde \lambda_1 I_{n-1} - \Lambda_{-1} - \rho U_{-1}^\top b b^\top U_{-1} ) U_{-1}^\top \tilde u_1 = \rho U_{-1}^\top b b^\top u_1 u_1^\top \tilde u_1 . 
$$
Assume $\lambda_1 \ge 0$ without loss of generality. By Weyl's inequality and the triangle inequality, we have $\tilde \lambda_1 - \lambda_i \ge \lambda_1 - \lambda_i - |\rho| \cdot \|b\|^2$ for any eigenvalue $\lambda_i$ of $A$, and so
$$
\tilde \lambda_1 I_{n-1} - \Lambda_{-1} - \rho U_{-1}^\top b b^\top U_{-1} 
\succeq \tilde \lambda_1 I_{n-1} - \Lambda_{-1} - |\rho| \cdot \|b\|^2 I_{n-1}
\succeq (\Delta - 2 |\rho| \cdot \|b\|^2) I_{n-1}
\succeq (\Delta/2) I_{n-1}
$$
by the assumption $|\rho| \cdot \|b\|^2 \le \Delta/4$, where $\succeq$ denotes the Loewner order. 
Moreover, we have 
$$
\|\rho U_{-1}^\top b b^\top u_1 u_1^\top \tilde u_1 \|
\le |\rho| \cdot \|b\| \cdot |b^\top u_1| \cdot 1 . 
$$
Combining the above three displays yields 
$$
\|U_{-1}^\top \tilde u_1\| 
\le 2 \frac{|\rho| \cdot \|b\| \cdot |b^\top u_1|}{\Delta} .
$$
Let $\Theta(u_1, \tilde u_1)$ denote the angle between $u_1$ and $\tilde u_1$. 
In view of the relation
$$
\|u_1 - \tilde u_1\| \le \sqrt{2} \sin \Theta(u_1, \tilde u_1) 
= \sqrt{2} \|U_{-1}^\top \tilde u_1\| 
$$
(which holds up to a sign flip of $\tilde u_1$), the proof is complete. 
\end{proof}

\subsection{Proof of Theorem~\ref{thm:entrywise}}
We now prove the entrywise error bound \eqref{eq:entrywise-bound} in Theorem~\ref{thm:entrywise}. 
It is an immediate consequence of the following more precise theorem with explicit logarithmic factors.

\begin{theorem}
\label{thm:entrywise-general}
Assume Model~\ref{mod:gauss}. 
Let $\yt_j^\top$ be the $j$th row of $\Yt$ for $j \in [N]$, and let $\tilde{\ub}$ be the leading eigenvector of the matrix $\Mt$ defined in \eqref{eq:def-mt}. 
There exist universal constants $c > 0$ and $C > 0$ such that the following holds. Let $\delta \in (0,1)$. Suppose that
$N \ge \log^7(N/\delta)$ and 
\begin{equation}
\frac{\eta}{\left| \|v\|_4^4 - \frac{3}{N} \right|} \le c ,
\label{eq:eta-cond}
\end{equation}
where $\eta = \eta(N,n,v,\delta)$ is defined in \eqref{eq:def-eta}. 
Then it holds with probability at least $1 - \delta$ that, up to a sign flip of $\tilde{\ub}$, for all $j \in [N]$, we have 
\begin{equation}
\left| \tilde y_j^\top \tilde{\ub} 
- v_j \right|
\le \frac{C \, \eta}{\left| \|v\|_4^4 - \frac{3}{N} \right|} \left( |v_j| + \frac{\sqrt{\log(N/\delta)}}{\sqrt{N}} \right) . 
\label{eq:simple}
\end{equation}
\end{theorem}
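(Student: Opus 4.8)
The plan is to control the entrywise error via the following decomposition. Write $\tilde y_j^\top \tilde{\ub}$ in the coordinate system where the subspace is spanned by $\Yg$ rather than $\Yt = \Yg Q$. As in the proof of Theorem~\ref{thm:gaussian-general}, set $\Mg = Q\Mt Q^\top$, let $\ub$ be the leading eigenvector of $\Mg$ (so $\tilde{\ub} = Q^\top \ub$), and note $\tilde y_j^\top \tilde{\ub} = \yg_j^\top \ub$ where $\yg_j^\top = (v_j \; b_j^\top)$. Thus the target becomes $|\yg_j^\top \ub - v_j| = |(v_j\; b_j^\top)(\ub - e_1)| \le |v_j|\,|(\ub)_1 - 1| + |b_j^\top \ub|$. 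The first term is handled by the $\ell_2$ bound $\|\ub - e_1\| \le C\eta / |\|v\|_4^4 - 3/N|$ from Proposition~\ref{prop:gauss-spec-norm} plus Davis--Kahan (Lemma~\ref{lem:dk}), which in particular bounds $|(\ub)_1 - 1|$. The entire difficulty is the second term $|b_j^\top \ub|$: naively $|b_j^\top \ub| \le \|b_j\| \|\ub\| \approx \sqrt{n/N}$, which is far too weak — it does not even have the factor $\eta$. The point is that $b_j$ and $\ub$ are \emph{not} independent (since $\ub$ depends on all the rows including $j$), and the correlation between them is what must be pinned down.

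The key idea is a \textbf{leave-one-out} argument. For each $j \in [N]$, define $\Mg^{(j)}$ to be the matrix $\Mg$ with the $j$th summand $(\|\yg_j\|^2 - \frac{n-1}{N})\yg_j\yg_j^\top$ deleted, and let $\ub^{(j)}$ be its leading eigenvector. Since $\ub^{(j)}$ is independent of row $j$, we have $b_j^\top \ub^{(j)} \sim \mathcal{N}(0, \|\ub^{(j)}\|^2/N)$ up to the small first-coordinate contribution, so $|b_j^\top \ub^{(j)}| \le \tilde O(1/\sqrt{N})$ with high probability, uniformly over $j$ by a union bound. It then remains to bound $|b_j^\top(\ub - \ub^{(j)})| \le \|b_j\|\,\|\ub - \ub^{(j)}\|$, and here $\|b_j\| \le \tilde O(\sqrt{n/N})$ with high probability. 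So I need $\|\ub - \ub^{(j)}\| \le \tilde O(\eps/\sqrt{n})$ roughly. This is exactly a rank-one perturbation estimate: $\Mg = \Mg^{(j)} + (\|\yg_j\|^2 - \frac{n-1}{N})\yg_j\yg_j^\top$ differs from $\Mg^{(j)}$ by a rank-one term with coefficient $\rho_j := \|\yg_j\|^2 - \frac{n-1}{N}$ of size $\tilde O(\sqrt{n}/N + v_j^2)$ and vector $\yg_j$ of norm $\approx \sqrt{n/N}$. Applying Lemma~\ref{lem:rank-one-perturb} with $A = \Mg^{(j)}$, the eigengap $\Delta \gtrsim |\|v\|_4^4 - 3/N|$ (from Proposition~\ref{prop:gauss-spec-norm} applied to the leave-one-out matrix, whose spectral perturbation is essentially the same $\eta$), and noting the crucial correlation factor $|\yg_j^\top \ub^{(j)}|$ — which is again small, $\tilde O(1/\sqrt N)$ plus $|v_j|$, by the same reasoning since $\ub^{(j)}$ is independent of row $j$ — one obtains
\[
\|\ub - \ub^{(j)}\| \le 2\sqrt{2}\,\frac{|\rho_j|\cdot\|\yg_j\|\cdot|\yg_j^\top\ub^{(j)}|}{\Delta} \le \tilde O\!\left(\frac{(\sqrt n/N + v_j^2)\sqrt{n/N}\,(1/\sqrt N + |v_j|)}{|\|v\|_4^4 - 3/N|}\right).
\]
Multiplying by $\|b_j\| \le \tilde O(\sqrt{n/N})$ and simplifying (using the definition of $\eta$ and $\eps$, plus $\eps \ll 1$ to absorb lower-order terms) yields a bound of the form $\tilde O\big(\frac{\eta}{|\|v\|_4^4 - 3/N|}(|v_j| + 1/\sqrt N)\big)$, as desired.

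\textbf{The main obstacle} is the circularity inherent in leave-one-out arguments: to bound $|\yg_j^\top\ub^{(j)}|$ and to verify the hypothesis $|\rho_j|\|\yg_j\|^2 \le \Delta/4$ of Lemma~\ref{lem:rank-one-perturb}, I need to know that $\ub^{(j)}$ is close to $e_1$ and that $\Mg^{(j)}$ has a healthy eigengap — but this requires applying Proposition~\ref{prop:gauss-spec-norm} to $\Mg^{(j)}$, which I can do since $\Mg^{(j)}$ has exactly the same structure as $\Mg$ with $N$ replaced by $N-1$ rows (and the planted vector $v$ with the $j$th coordinate zeroed out), and the resulting $\eta$ value changes negligibly; the condition~\eqref{eq:eta-cond} must be chosen with enough slack (a small enough absolute constant $c$) that it continues to hold after this modification. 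Care is also needed to make all the high-probability events hold \emph{simultaneously} for all $j \in [N]$ and for the deletion events: since there are only $N$ values of $j$, taking $\delta \to \delta/N^{O(1)}$ in each application costs only logarithmic factors (absorbed into $\tilde O$), and one must check that $N \ge \log^7(N/\delta)$ is preserved. Finally, one must be slightly careful about the sign ambiguity — the sign of $\ub$ is fixed once (to make $(\ub)_1 > 0$), and then all the $\ub^{(j)}$ are aligned to $\ub$; since $\ub^{(j)}$ is close to $\ub$ which is close to $e_1$, this is consistent. Once these bookkeeping points are settled, the estimate~\eqref{eq:simple} follows by collecting the two contributions $|v_j|\,|(\ub)_1 - 1|$ and $|b_j^\top\ub^{(j)}| + \|b_j\|\|\ub - \ub^{(j)}\|$.
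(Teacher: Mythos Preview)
Your overall framework—leave-one-out plus the rank-one perturbation Lemma~\ref{lem:rank-one-perturb}—matches the paper's. But there is a genuine gap in how you set up the leave-one-out matrix.

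You define $M^{(j)}$ by deleting the entire $j$th summand $(\|y_j\|^2 - \frac{n-1}{N})\,y_j y_j^\top$ from $M$. This also deletes the deterministic contribution $v_j^4\, e_1 e_1^\top$ to the signal, so your $M^{(j)}$ is close to $(\|v\|_4^4 - v_j^4 - \tfrac{3}{N})\, e_1 e_1^\top$, not to $(\|v\|_4^4 - \tfrac{3}{N})\, e_1 e_1^\top$. When $v_j^4$ is comparable to $|\|v\|_4^4 - 3/N|$—for instance $v$ a standard basis vector in $\RR^N$ and $j$ the nonzero coordinate—the eigengap of $M^{(j)}$ collapses, $\ub^{(j)}$ need not be near $e_1$, and the hypothesis $|\rho_j|\,\|y_j\|^2 \le \Delta/4$ of Lemma~\ref{lem:rank-one-perturb} fails outright (here $|\rho_j|\,\|y_j\|^2 \approx 1$ while $\Delta \approx 0$). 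Nothing in the hypotheses of Theorem~\ref{thm:entrywise-general} rules this regime out.

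The paper fixes this by adding back the deterministic piece: its $M^{(j)}$ is your $M^{(j)}$ plus $v_j^4\, e_1 e_1^\top$, which preserves the full signal and eigengap while remaining independent of $b_j$. The price is that $M - M^{(j)}$ is now rank two, so Lemma~\ref{lem:rank-one-perturb} no longer applies directly. The paper therefore inserts an intermediate matrix
\[
L^{(j)} := M^{(j)} + \Big(v_j^2 + \|b_j\|^2 - \tfrac{n-1}{N}\Big)\,\tilde b_j \tilde b_j^\top, \qquad \tilde b_j := (0,\,b_j^\top)^\top.
\]
The rank-one step goes from $M^{(j)}$ to $L^{(j)}$, and the correlation factor in Lemma~\ref{lem:rank-one-perturb} becomes $|\tilde b_j^\top \ub^{(j)}| = |b_j^\top (\ub^{(j)})_{-1}|$—with \emph{no} $|v_j|$ contribution—so it is of order $\eta/(\Delta\sqrt N)$. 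The remaining step from $L^{(j)}$ to $M$ is handled by plain Davis--Kahan, with $\|M - L^{(j)}\|$ bounded by a quantity $\eta_2(j)$ whose $|v_j|$ dependence feeds correctly into \eqref{eq:simple}.

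A smaller point: your bound $|b_j^\top \ub^{(j)}| \le \tilde O(1/\sqrt N)$ is too loose for the conclusion—you need the sharper $\tilde O\big(\tfrac{\eta}{\Delta}\cdot\tfrac{1}{\sqrt N}\big)$, obtained by using $\|(\ub^{(j)})_{-1}\| \le O(\eta/\Delta)$ rather than $\le 1$ in the Gaussian tail. This is an easy fix, but it is what actually produces the $\eta/\Delta$ prefactor on the $1/\sqrt N$ term in \eqref{eq:simple}.
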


\begin{proof}[Proof of Theorem~\ref{thm:entrywise}]
Take $\delta = N^{-\log N}$ and $\|v\| = 1$ in the above theorem.  
As we have seen in \eqref{eq:l2-bd-simple}, it holds that
$$
\frac{\eta}{\left| \|v\|_4^4 - \frac{3}{N} \right|} \le \tilde O \left( \frac{1}{ \|v\|_4^4 } \frac{n}{N^{3/2}}  + \frac{\|v\|_\infty}{ \|v\|_4^2 } \sqrt{\frac{n}{N}} \, \right) 
$$
by Lemma~\ref{lem:change-norm}. 
Therefore, Theorem~\ref{thm:entrywise} follows immediately. 
\end{proof}

\begin{proof}[Proof of Theorem~\ref{thm:entrywise-general}]
Following the notation in the proof of Theorem~\ref{thm:gaussian-general}, we have $\tilde Y \tilde{\ub} = Y \ub$ where $\ub$ and $\tilde{\ub}$ are the leading eigenvectors of $M$ and $\tilde M$ respectively. 
Therefore, for $j \in [N]$, it suffices to study $y_j^\top \ub = \tilde y_j^\top \tilde{\ub}$, where $y_j^\top$ is the $j$th row of $Y$. 
The proof is based on a leave-one-out analysis. 

\subsubsection*{Setting up the leave-one-out analysis}
Recall that $y_j = (v_j \; b_j^\top)$ where $b_j \sim \mathcal{N}(0, \frac{1}{N} I_{n-1})$, so 
$$
\left( \|\yg_j\|^2 - \frac{n-1}{N} \right) y_j y_j^\top 
= \left( v_j^2 + \|b_j\|^2 - \frac{n-1}{N} \right)
\begin{bmatrix}
v_j^2 & v_j b_j^\top \\
v_j b_j & b_j b_j^\top  
\end{bmatrix} . 
$$
Let $e_1$ be the first standard basis vector in $\RR^n$, and define a matrix 
\begin{equation*}
M^{(j)} := \sum_{i \in [N] \setminus \{j\}} \left( \|\yg_i\|^2 - \frac{n-1}{N} \right) \yg_i \yg_i^\top - \frac{3}{N} I_n + v_j^4 e_1 e_1^\top . 
\end{equation*}
Furthermore, let $\tilde b_j$ be the vector in $\RR^n$ defined by $\tilde b_j^\top = (0 \;\; b_j^\top)$, and define a matrix 
\begin{equation}
L^{(j)} := M^{(j)} 
+ \left( v_j^2 + \|b_j\|^2 - \frac{n-1}{N} \right) \tilde b_j \tilde b_j^\top . 
\label{eq:update}
\end{equation}
In view of the definition of $M$ in \eqref{eq:def-mg} and the above three equations, we can write
\begin{equation}
\Mg 
= L^{(j)} + v_j^2 
\begin{bmatrix}
0 & v_j b_j^\top \\
v_j b_j & 0  
\end{bmatrix} 
+ \left( \|b_j\|^2 - \frac{n-1}{N} \right) 
\begin{bmatrix}
v_j^2 & v_j b_j^\top \\
v_j b_j & 0 
\end{bmatrix} .
\label{eq:update-2}
\end{equation}
Let the leading eigenvectors of $M^{(j)}$, $L^{(j)}$, and $M$ be denoted by $\ub^{(j)}$, $\wb^{(j)}$, and $\ub$ respectively. 
Then we have 
\begin{equation*}
y_j^\top \ub 
= y_j^\top \ub^{(j)} + y_j^\top (\wb^{(j)} - \ub^{(j)}) + y_j^\top (\ub - \wb^{(j)}) 
\end{equation*}
and thus 
\begin{equation}
\left| y_j^\top \ub 
- v_j \right|
\le \big| y_j^\top \ub^{(j)} - v_j \big| 
+ \|y_j\| \cdot \big\|\wb^{(j)} - \ub^{(j)}\big\|
+ \|y_j\| \cdot \big\|\ub - \wb^{(j)}\big\| . 
\label{eq:key-eq}
\end{equation}
The key to the leave-one-out analysis is that $y_j$ is independent of $\ub^{(j)}$, so it is easy to analyze $y_j^\top \ub^{(j)}$. 
In addition, it is not hard to bound $\|\ub - \wb^{(j)}\|$. 
The interesting part of the proof is to analyze $\|\wb^{(j)} - \ub^{(j)}\|$ by studying the rank-one update \eqref{eq:update}. 
Combining all these pieces then yields our final result regarding $y_j^\top \ub$.

\subsubsection*{Spectral norm bounds}
Define an event
\begin{equation}
\mathcal{E}_1 := \left\{ \left\| \Mg - \left( \|v\|_4^4 - \frac{3}{N} \right) e_1 e_1^\top \right\|
\le \eta \right\}
\label{eq:event-1}
\end{equation}
where $\eta = \eta(N,n,v,\delta)$ is defined in \eqref{eq:def-eta}. 
By Proposition~\ref{prop:gauss-spec-norm}, we know that $\mathcal{E}_1$ occurs with probability at least $1 - \delta$. 
Next, by standard concentration for the chi-square distribution (Lemma~\ref{lem:chi-sq}), there exists a constant $C_1 > 0$ such that the event 
$$
\mathcal{E}_2 := 
\left\{ \left| \|b_j\|^2 - \frac{n-1}{N} \right| 
\le C_1 \frac{ \sqrt{n \log(N/\delta)} + \log(N/\delta) }{ N } \text{ for all } j \in [N] \right\} 
$$
occurs with probability at least $1 - \delta$. 
On the event $\mathcal{E}_2$, we have 
\begin{equation}
\|b_j\| \le C_2 \frac{\sqrt{n} + \sqrt{\log(N/\delta)}}{\sqrt{N}}
\label{eq:b-bd}
\end{equation}
for some constant $C_2 > 0$ (this can be seen by squaring both sides of~\eqref{eq:b-bd}).
As a result, 
\begin{align}
\| M - L^{(j)} \| 
&\le 2 |v_j|^3 \|b_j\| + v_j^2 \left| \|b_j\|^2 - \frac{n-1}{N} \right| + 2 |v_j| \left| \|b_j\|^2 - \frac{n-1}{N} \right| \|b_j\| \notag \\
&\le C_3 \left( |v_j|^3 \frac{\sqrt{n} + \sqrt{\log(N/\delta)}}{\sqrt{N}} + |v_j| \frac{n \sqrt{\log(N/\delta)} + \log^{3/2}(N/\delta)}{N^{3/2}} \right) 
=: \eta_2(j) 
\label{eq:sp-bd-1}
\end{align}
for a constant $C_3 > 0$ and $\eta_2(j) = \eta_2(N,n,v_j,\delta)$, where the first inequality follows from \eqref{eq:update-2}, and the second inequality follows from the definition of $\mathcal{E}_2$ and applying the inequality $2xy \le x^2+y^2$ for $x, y \in \RR$ a few times. 
Furthermore, in view of \eqref{eq:update}, we have on the event $\mathcal{E}_2$ that 
\begin{align}
\| L^{(j)} - M^{(j)} \| 
&\le v_j^2 \|b_j\|^2 + \left| \|b_j\|^2 - \frac{n-1}{N} \right| \|b_j\|^2 \notag \\
&\le C_4 \left( v_j^2 \frac{n + \log(N/\delta)}{N} + \frac{n^{3/2} \sqrt{\log(N/\delta)} + \log^2(N/\delta)}{N^2} \right)
=: \eta_3(j) 
\label{eq:sp-bd-2}
\end{align}
for a constant $C_4 > 0$ and $\eta_3(j) = \eta_3(N,n,v_j,\delta)$. 

Using $|v_j|^3 \le \|v\|_6^3$, $|v_j| \le \|v\|_\infty$, and the inequality $2xy \le x^2+y^2$, it is not hard to see that  
\begin{equation}
\eta_2(j) + \eta_3(j) \le \eta 
\label{eq:eta-relation}
\end{equation}
for all $j \in [N]$, 
provided that the constant $C$ in \eqref{eq:def-eta} is taken to be sufficiently large. 
Therefore, on the event $\mathcal{E}_1 \cap \mathcal{E}_2$, it follows from \eqref{eq:event-1}, \eqref{eq:sp-bd-1} and \eqref{eq:sp-bd-2} that 
\begin{equation}
\left\| M^{(j)} - \left( \|v\|_4^4 - \frac{3}{N} \right) e_1 e_1^\top \right\|
\le 2 \eta . 
\label{eq:sp-bd-3} 
\end{equation}
Moreover, under the assumption \eqref{eq:eta-cond} with $c$ sufficiently small, the matrices $M$, $L^{(j)}$, and $M^{(j)}$ are all close to $\left( \|v\|_4^4 - \frac{3}{N} \right) e_1 e_1^\top$ on the event $\mathcal{E}_1 \cap \mathcal{E}_2$. 
In particular, by Weyl's inequality the gap between the leading singular value and any other singular value of each of these matrices is larger than $\frac 12 \left| \|v\|_4^4 - \frac{3}{N} \right|$. 
Recall that $\ub^{(j)}$, $\wb^{(j)}$, and $\ub$ denote the leading eigenvectors of $M^{(j)}$, $L^{(j)}$, and $M$ respectively.  
The Davis--Kahan theorem (Lemma~\ref{lem:dk}) together with \eqref{eq:sp-bd-1} and \eqref{eq:sp-bd-3} then implies that $\mathcal{E}_1 \cap \mathcal{E}_2 \subseteq \mathcal{E}_3 \cap \mathcal{E}_4$ where 
\begin{align}
\mathcal{E}_3 &:= \bigg\{ \|\ub - \wb^{(j)}\| \le \frac{6 \, \eta_2(j)}{\left| \|v\|_4^4 - \frac{3}{N} \right|} \text{ for all } j \in [N] \bigg\} , \label{eq:perturb-1} \\
\mathcal{E}_4 &:= \bigg\{ \|\ub^{(j)} - e_1\| \le \frac{12 \, \eta}{\left| \|v\|_4^4 - \frac{3}{N} \right|} \text{ for all } j \in [N] \bigg\} ,
\notag
\end{align}
if we choose the signs of the eigenvectors so that they are all close to $e_1$. 
Also, we have $\PP \{ \mathcal{E}_3 \cap \mathcal{E}_4 \} \ge \PP \{ \mathcal{E}_1 \cap \mathcal{E}_2 \} \ge 1 - 2 \delta$.

\subsubsection*{Rank-one perturbation}
We now study $\|\wb^{(j)} - \ub^{(j)}\|$ using the rank-one update \eqref{eq:update} and Lemma~\ref{lem:rank-one-perturb}. 
As discussed above, on the event $\mathcal{E}_1 \cap \mathcal{E}_2$, the gap between the largest and the second largest singular value of $M^{(j)}$ is at least $\frac 12 \left| \|v\|_4^4 - \frac{3}{N} \right|$. 
Also, we have 
$$
\left| v_j^2 + \|b_j\|^2 - \frac{n-1}{N} \right| \cdot \|b_j\|^2 
\le \eta_3(j) \le \frac 18 \left| \|v\|_4^4 - \frac{3}{N} \right| 
$$
by \eqref{eq:sp-bd-2}, \eqref{eq:eta-relation}, and \eqref{eq:eta-cond}. 
Therefore, Lemma~\ref{lem:rank-one-perturb} can be applied to give 
\begin{equation}
\|\wb^{(j)} - \ub^{(j)}\| \le \frac{4 \sqrt{2}}{\left| \|v\|_4^4 - \frac{3}{N} \right|} \left| v_j^2 + \|b_j\|^2 - \frac{n-1}{N} \right| \cdot \|b_j\| \cdot \left| \tilde b_j^\top \ub^{(j)}\right| . 
\label{eq:zeta-xi}
\end{equation}
Let $\ub^{(j)}_{-1}$ denote the subvector of $\ub^{(j)}$ with its first entry removed. 
Conditioning on any instance of $M^{(j)}$ such that the event $\mathcal{E}_4$ occurs, we have $\tilde b_j^\top \ub^{(j)} \sim \mathcal{N}(0, \frac{1}{N} \|\ub^{(j)}_{-1}\|^2)$ where $\|\ub^{(j)}_{-1}\| \le \frac{12 \, \eta}{\left| \|v\|_4^4 - \frac{3}{N} \right|}$.  
Hence, there exists a constant $C_5 > 0$ such that the event 
\begin{equation*}
\mathcal{E}_5 := \left\{ \left| \tilde b_j^\top \ub^{(j)} \right| \le C_5 \frac{\eta \sqrt{\log(N/\delta)}}{\left| \|v\|_4^4 - \frac{3}{N} \right| \sqrt{N}} \text{ for all } j \in [N] \right\} 
\end{equation*}
occurs with probability at least $1 - 3 \delta$. 
On the event $\mathcal{E}_2 \cap \mathcal{E}_5$, it follows from \eqref{eq:zeta-xi} and \eqref{eq:eta-cond} that  
\begin{align}
\|\wb^{(j)} - \ub^{(j)}\| 
&\le \frac{C_6}{\left| \|v\|_4^4 - \frac{3}{N} \right|} \left( v_j^2 + \frac{ \sqrt{n \log(N/\delta)} + \log(N/\delta) }{ N }  \right) \frac{ \sqrt{n} + \sqrt{\log(N/\delta)} }{ \sqrt{N} } \cdot \frac{\eta \sqrt{\log(N/\delta)}}{\left| \|v\|_4^4 - \frac{3}{N} \right| \sqrt{N}} \notag \\
&\le \frac{C_7}{\left| \|v\|_4^4 - \frac{3}{N} \right|} \left( v_j^2 \frac{ \sqrt{n \log(N/\delta)} + \log(N/\delta) }{ N } + \frac{ n \log(N/\delta) + \log^2(N/\delta) }{ N^2 }  \right) 
=: \frac{\eta_4(j)}{\left| \|v\|_4^4 - \frac{3}{N} \right|}
\label{eq:eta-4}
\end{align}
for constants $C_6, C_7 > 0$ and $\eta_4(j) = \eta_4(N,n,v_j,\delta)$. 

\subsubsection*{Finishing the proof}
Finally, on the event $\mathcal{E}_4 \cap \mathcal{E}_5$, we have
\begin{equation}
|\yg_j^\top \ub^{(j)} - v_j| \le |v_j| \cdot \left|1-\ub^{(j)}_1\right| + \Big| \tilde b_j^\top \ub^{(j)} \Big| 
\le \frac{C_8 \eta}{\left| \|v\|_4^4 - \frac{3}{N} \right|} \left( |v_j| + \frac{\sqrt{\log(N/\delta)}}{\sqrt{N}} \right) 
\label{eq:y-xi-ip}
\end{equation}
for a constant $C_8 > 0$. 
On the event $\mathcal{E}_1 \cap \mathcal{E}_2 \cap  \mathcal{E}_3 \cap \mathcal{E}_4 \cap \mathcal{E}_5$ which occurs with probability at least $1 - 3 \delta$, we combine \eqref{eq:key-eq}, 
\eqref{eq:y-xi-ip}, 
\eqref{eq:b-bd}, 
\eqref{eq:eta-4}, and
\eqref{eq:perturb-1} to obtain 
\begin{equation*}
\left| y_j^\top \ub 
- v_j \right|
\le \frac{C_8 \, \eta}{\left| \|v\|_4^4 - \frac{3}{N} \right|} \left( |v_j| + \frac{\sqrt{\log(N/\delta)}}{\sqrt{N}} \right) + \left(|v_j| + C_2 \frac{\sqrt{n} + \sqrt{\log(N/\delta)}}{\sqrt{N}} \right) \frac{\eta_4(j) + 6 \, \eta_2(j)}{\left| \|v\|_4^4 - \frac{3}{N} \right|}  . 
\end{equation*}
Comparing the above bound to our final result \eqref{eq:simple}, we see that it remains to show that the second summand on the right-hand side is dominated by the first. 
To this end, first note that $\eta_4(j) + 6 \, \eta_2(j) \le 7 \,\eta$ using \eqref{eq:eta-relation}, \eqref{eq:eta-4}, and \eqref{eq:def-eta}, provided that the constant $C$ in \eqref{eq:def-eta} is taken to be sufficiently large. 
Moreover, we have 
\begin{align*}
&\frac{\sqrt{n} + \sqrt{\log(N/\delta)}}{\sqrt{N}} \cdot \eta_4(j)  \\
&= C_7 \frac{\sqrt{n} + \sqrt{\log(N/\delta)}}{\sqrt{N}} \left( v_j^2 \frac{ \sqrt{n \log(N/\delta)} + \log(N/\delta) }{ N } + \frac{ n \log(N/\delta) + \log^2(N/\delta) }{ N^2 }  \right) \\
&\le C_9 \left( |v_j| \cdot |v_j| \frac{ n \sqrt{\log(N/\delta)} + \log^{3/2}(N/\delta) }{ N^{3/2} } + \frac{\sqrt{\log(N/\delta)}}{\sqrt{N}} \frac{ n^{3/2} + \log^2(N/\delta) }{ N^2 }  \right) \\
&\le C_9 \cdot \left( |v_j| + \frac{\sqrt{\log(N/\delta)}}{\sqrt{N}} \right) \cdot \eta 
\end{align*}
for a constant $C_9 > 0$, and 
\begin{align*}
&\frac{\sqrt{n} + \sqrt{\log(N/\delta)}}{\sqrt{N}} \cdot \eta_2(j)  \\
&= C_3  \frac{\sqrt{n} + \sqrt{\log(N/\delta)}}{\sqrt{N}}  \left( |v_j|^3 \frac{\sqrt{n} + \sqrt{\log(N/\delta)}}{\sqrt{N}} + |v_j| \frac{n \sqrt{\log(N/\delta)} + \log^{3/2}(N/\delta)}{N^{3/2}} \right) \\
&\le C_{10}  \cdot |v_j| \left( v_j^2 \frac{n + \log(N/\delta)}{N} + \frac{n^{3/2} \sqrt{\log(N/\delta)} + \log^2(N/\delta)}{N^2} \right) \\
&\le C_{10} \cdot |v_j| \cdot \eta 
\end{align*}
for a constant $C_{10}>0$. 
Combining the above estimates yields 
\begin{equation*}
\left| y_j^\top \ub 
- v_j \right|
\le \frac{C_{11} \, \eta}{\left| \|v\|_4^4 - \frac{3}{N} \right|} \left( |v_j| + \frac{\sqrt{\log(N/\delta)}}{\sqrt{N}} \right) 
\end{equation*}
for a constant $C_{11} > 0$. 
\end{proof}

\subsection{Preliminaries for the Orthonormal Case}
\label{sec:prelim}

We now prove lemmas stated in Section~\ref{sec:pre}.

\begin{proof}[Proof of Lemma~\ref{lem:cov}]
By the definitions of $A$ and $B$ in \eqref{eq:def-ab}, we have 
$$
A - I_n = \sum_{i=1}^N
\begin{bmatrix}
v_i^2 & v_i b_i^\top \\
v_i b_i & b_i b_i^\top 
\end{bmatrix} 
- I_n 
= 
\begin{bmatrix}
0 & w^\top \\
w & B - I_{n-1} 
\end{bmatrix}
,
$$
where $w = \sum_{i=1}^N v_i b_i$. Therefore, 
$
\|A - I_n\| \le 2 \|w\| + \|B - I_{n-1}\| . 
$
Lemmas~\ref{lem:chi-sq} and~\ref{lem:sum-mat} can then be applied to bound $\|w\|$ and $\|B - I_{n-1}\|$ respectively to the desired order. 
\end{proof}

\begin{lemma}\label{lem:rand-proj}
Fix vectors $x,y \in \RR^N$, and define $\alpha := \langle x,y \rangle$. Let $P \in \RR^{N \times N}$ denote the projection matrix for orthogonal projection onto a uniformly random $n$-dimensional subspace of $\RR^N$. Then with probability $1-N^{-\omega(1)}$,
\[ \left| x^\top P y - \alpha \frac{n}{N} \right| \le \|x\| \cdot \|y\| \cdot \tilde O\left(\frac{\sqrt n}{N}\right). \]
\end{lemma}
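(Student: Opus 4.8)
The plan is to reduce to the case of unit vectors and then combine the polarization identity with a scalar concentration estimate for the projection of a single fixed direction.

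First I would note that both sides of the claimed inequality are positively homogeneous of degree one separately in $x$ and in $y$ (recall $\alpha = \langle x, y\rangle$ is bilinear), so after disposing of the trivial case $x = 0$ or $y = 0$, where both sides vanish, it suffices to prove $\big| x^\top P y - \alpha \frac nN \big| \le \tilde O(\sqrt n/N)$ for \emph{fixed} unit vectors $x, y$, with $|\alpha| \le 1$. The key ingredient is the following scalar fact: for any fixed unit vector $u \in \RR^N$, with probability $1 - N^{-\omega(1)}$,
\[
\Big| \|Pu\|^2 - \frac nN \Big| \le \tilde O\!\left(\frac{\sqrt n}{N}\right).
\]

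To establish this scalar fact I would invoke rotational invariance of the uniformly random $n$-dimensional subspace: $\|Pu\|^2$ has the same law as $\big(\sum_{i=1}^n g_i^2\big)\big/\big(\sum_{i=1}^N g_i^2\big)$ for i.i.d.\ $g_i \sim \mathcal N(0,1)$. Standard chi-square concentration (Lemma~\ref{lem:chi-sq}, applied with $\delta = N^{-\log N}$) gives, on an event of probability $1 - N^{-\omega(1)}$, that $\sum_{i=1}^n g_i^2 = n + E_1$ and $\sum_{i=1}^N g_i^2 = N + E_2$ with $|E_1| = \tilde O(\sqrt n)$ and $|E_2| = \tilde O(\sqrt N) \le N/2$ (the latter using that $N$ is large). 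Writing $\frac{n+E_1}{N+E_2} - \frac nN = \frac{E_1 N - n E_2}{N(N+E_2)}$ and using $|E_1| = \tilde O(\sqrt n)$, $|E_2| = \tilde O(\sqrt N)$, and $n \le N$ then bounds this difference by $\tilde O(\sqrt n/N)$, as desired.

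Finally, for any fixed $z \in \RR^N$ the scalar fact yields $z^\top P z = \|z\|^2\big(\frac nN + \tilde O(\sqrt n/N)\big)$, with the error read as $0$ when $z = 0$. Applying this to $z = x + y$ and $z = x - y$, whose squared norms $2 + 2\alpha$ and $2 - 2\alpha$ are each at most $4$, and using the polarization identity $x^\top P y = \frac14\big((x+y)^\top P(x+y) - (x-y)^\top P(x-y)\big)$ together with $\|x+y\|^2 - \|x-y\|^2 = 4\alpha$, I obtain $x^\top P y = \alpha \frac nN + \tilde O(\sqrt n/N)$ on the intersection of two events of probability $1 - N^{-\omega(1)}$; undoing the normalization multiplies the error by $\|x\|\,\|y\|$. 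I do not expect a substantial obstacle here: the only points needing care are the chi-square bookkeeping (keeping the error at $\tilde O(\sqrt n/N)$ rather than the cruder $\tilde O(1/\sqrt n)$ that a separate treatment of numerator and denominator would give) and the degenerate cases in the homogeneity reduction ($x = 0$, $y = 0$, or $x = \pm y$). Polarization is precisely what lets us sidestep having to control the joint distribution of $(O^\top x, O^\top y)$ under a random rotation $O$, which would be the messier alternative.
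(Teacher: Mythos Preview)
Your proof is correct and takes a genuinely different route from the paper. The paper uses rotational invariance in the opposite direction: it fixes $P$ to be projection onto the first $n$ coordinates and instead randomizes the pair $(x,y)$ as jointly uniform unit vectors with $\langle x,y\rangle = \alpha$, sampling them explicitly via Gaussians as $x = \tilde x/\|\tilde x\|$ and $y = \alpha x + \sqrt{1-\alpha^2}\, z$ with $z$ a random unit vector orthogonal to $x$. It then expands $x^\top P y$ into three terms and bounds each using concentration of quantities like $\sum_{i\le n}\tilde x_i^2$, $\sum_{i\le n}\tilde x_i \tilde z_i$, and $\langle \tilde x,\tilde z\rangle$. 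Your polarization argument is cleaner: by reducing the bilinear form to two instances of the quadratic form $u^\top P u$, you avoid ever touching the joint law of $(x,y)$ under a random rotation, and the single scalar fact $\big|\|Pu\|^2 - n/N\big|\le \tilde O(\sqrt n/N)$ does all the work. The paper's approach, on the other hand, makes the dependence on $\alpha$ somewhat more explicit in the intermediate steps, which is occasionally informative but not needed for the stated bound. Both arguments rely on the same chi-square concentration underneath; yours just packages it more economically.
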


\begin{proof}
Without loss of generality, we may assume that $x$ and $y$ are unit vectors. 
Furthermore, by rotational invariance, it is equivalent to take $x,y$ to be uniformly random unit vectors conditional on $\langle x,y \rangle = \alpha$, and take $P$ to be the projection matrix for orthogonal projection onto a fixed $n$-dimensional subspace---we will choose simply the span of the first $n$ standard basis vectors. We can sample $(x,y)$ as follows. First let $x = \tilde x / \|\tilde x\|$ where $\tilde x \sim \mathcal{N}(0, \frac{1}{N} I_N)$. Then sample $z$ orthogonal to $x$ via
\[ \tilde z \sim \mathcal{N} \left( 0, \frac{1}{N} I_N \right) , \qquad \bar z = \tilde z - \langle \tilde z,x \rangle x, \qquad z = \bar z / \|\bar z\|. \]
Finally, let $y = \alpha x + \sqrt{1-\alpha^2} z$ so that $\langle x,y \rangle = \alpha$. By standard concentration, we have with probability $1-N^{-\omega(1)}$,
\[ \Big| \|\tilde x\| - 1 \Big| \le \tilde{O}\left(\frac{1}{\sqrt N}\right), \qquad \Big| \|\tilde z\| - 1 \Big| \le \tilde{O}\left(\frac{1}{\sqrt N}\right), \qquad |\langle \tilde x, \tilde z \rangle| \le \tilde{O}\left(\frac{1}{\sqrt N}\right), \]
and as a result,
\[ \Big| \|\bar z\| - 1 \Big| \le \tilde{O}\left(\frac{1}{\sqrt N}\right). \] 
Leveraging our convenient choice for $P$, we also have with probability $1-N^{-\omega(1)}$,
\[ \left| \tilde x^\top P \tilde x - \frac{n}{N} \right| = \left| \sum_{i=1}^n \tilde x_i^2 - \frac{n}{N} \right| \le \tilde{O}\left(\frac{\sqrt n}{N}\right), \]
\[ \left| \tilde x^\top P \tilde z \right| = \left| \sum_{i=1}^n \tilde x_i \tilde z_i \right| \le \tilde{O}\left(\frac{\sqrt n}{N}\right). \]
As a result,
\begin{align*}
x^\top P y &= \alpha x^\top P x + \sqrt{1-\alpha^2} x^\top P z \\
&= \frac{\alpha}{\|\tilde x\|^2} \tilde x^\top P \tilde x + \frac{\sqrt{1-\alpha^2}}{\|\bar z\|} x^\top P (\tilde z - \langle \tilde z,x \rangle x) \\
&= \frac{\alpha}{\|\tilde x\|^2} \tilde x^\top P \tilde x + \frac{\sqrt{1-\alpha^2}}{\|\tilde x\| \|\bar z\|} \tilde x^\top P \tilde z - \frac{\sqrt{1-\alpha^2}}{\|\tilde x\|^3 \|\bar z\|} \langle \tilde z,\tilde x \rangle \tilde x^\top P \tilde x.
\end{align*}
Using the bounds from above,
\[ \left| x^\top P y - \alpha \frac{n}{N} \right| \le \tilde{O}\left(\alpha \frac{n}{N^{3/2}} + \alpha \frac{\sqrt n}{N} + \frac{\sqrt n}{N} + \frac{n}{N^{3/2}}\right) \le \tilde{O}\left(\frac{\sqrt n}{N}\right) \]
as desired.
\end{proof}

\begin{proof}[Proof of Lemma~\ref{lem:A-exp}]
Since 
\begin{equation}
A = \sum_{i=1}^N
\begin{bmatrix}
v_i^2 & v_i b_i^\top \\
v_i b_i & b_i b_i^\top 
\end{bmatrix} 
=
\begin{bmatrix}
1 & w^\top \\
w & B 
\end{bmatrix} 
\quad \text{ where } \quad
w:= \sum_{i=1}^N v_i b_i ,
\label{eq:def-w}
\end{equation}
by a standard formula for the inverse of a block matrix, we have 
$$
A^{-1} =
\begin{bmatrix}
(1 - w^\top B^{-1} w)^{-1}  & - (1 - w^\top B^{-1} w)^{-1} w^\top B^{-1} \\
- B^{-1} w  (1 - w^\top B^{-1} w)^{-1} & B^{-1} + B^{-1} w (1 - w^\top B^{-1} w)^{-1} w^\top B^{-1}  
\end{bmatrix} . 
$$
Therefore, letting 
\begin{equation} 
\Phi := w^\top B^{-1} w, \qquad \phi_i := w^\top B^{-1} b_i  \quad \text{ for } i \in [N],  \label{eq:Phi-phi}
\end{equation}
we obtain 
\begin{subequations} \label{eq:4eqs}
\begin{equation}\label{eq:eAe1}
e_1^\top A^{-1} \tilde e_1 = \frac{1}{1-\Phi} , 
\end{equation}
\begin{equation}\label{eq:eAb}
e_1^\top A^{-1} \tilde b_i = - \frac{\phi_i}{1-\Phi} , 
\end{equation}
\begin{equation}\label{eq:bAb}
\tilde b_i^\top A^{-1} \tilde b_i = b_i^\top B^{-1} b_i + \frac{\phi_i^2}{1-\Phi} , 
\end{equation}
\begin{equation}\label{eq:A-exp}
\yg_i^\top A^{-1} \yg_i = b_i^\top B^{-1} b_i + \frac{\phi_i^2}{1-\Phi} - \frac{2 v_i \phi_i}{1-\Phi} + \frac{v_i^2}{1-\Phi}
 = b_i^\top B^{-1} b_i + \frac{(\phi_i - v_i)^2}{1-\Phi} . 
\end{equation}
\end{subequations}

Let $U$ denote the $N \times (n-1)$ matrix with rows $b_i^\top$. Define $P = U(U^\top U)^{-1}U^\top$, which is the projection matrix for orthogonal projection onto the column span of $U$. Since $B = U^\top U$, $w = U^\top v$, and $b_i = U^\top \mathbf{e}_i$ where $\mathbf{e}_i$ is the $i$th standard basis vector in $\RR^N$, we see that 
\[ \Phi = v^\top P v, \qquad \phi_i = v^\top P \mathbf{e}_i. \]
Since the column span of $U$ is a uniformly random $(n-1)$-dimensional subspace of $\RR^N$, we have by Lemma~\ref{lem:rand-proj} that with probability $1-N^{-\omega(1)}$,
\begin{equation}
\left| \Phi - \frac{n}{N} \right| \le \tilde{O}\left(\frac{\sqrt n}{N}\right), \qquad \left|\phi_i - v_i \frac{n}{N}\right| \le \tilde{O}\left(\frac{\sqrt n}{N}\right),
\label{eq:phi-bd-1} 
\end{equation}
and as a result,
\begin{equation}
\left|\frac{1}{1-\Phi} - \frac{1}{1-n/N}\right| \le \tilde{O}\left(\frac{\sqrt n}{N}\right). 
\label{eq:phi-bd-2}
\end{equation}
Plugging these bounds into \eqref{eq:4eqs} yields 
\begin{equation*}
e_1^\top A^{-1} \tilde e_1 = \frac{1}{1-n/N} + \tilde O \left( \frac{\sqrt{n}}{N} \right) , 
\end{equation*}
\begin{align*}
e_1^\top A^{-1} \tilde b_i &= - \frac{v_i n}{N (1-n/N)} + \tilde O \left( |v_i| \frac{n}{N} \frac{\sqrt{n}}{N} + \frac{1}{1-n/N} \frac{\sqrt{n}}{N} + \frac{n}{N^2} \right) \\
&= - \frac{v_i n}{N - n} + \tilde O \left( \frac{\sqrt{n}}{N} \right) , 
\end{align*}
\begin{align*}
\tilde b_i^\top A^{-1} \tilde b_i 
&= b_i^\top B^{-1} b_i + v_i^2 \frac{n^2}{N^2} \frac{1}{1-n/N} + \tilde O \left( \left(v_i^2 \frac{n^2}{N^2} + \frac{n}{N^2} + |v_i| \frac{n}{N} + \frac{\sqrt{n}}{N} \right) \frac{\sqrt{n}}{N} \right) \\ 
&= b_i^\top B^{-1} b_i + \frac{v_i^2 n^2}{N(N-n)} + \tilde O \left( |v_i| \frac{n^{3/2}}{N^2} + \frac{n}{N^2} \right)  , 
\end{align*}
\begin{align*}
\yg_i^\top A^{-1} \yg_i &= b_i^\top B^{-1} b_i + \left(v_i \frac{n}{N} - v_i\right)^2 \frac{1}{1-n/N} + \tilde{O}\left( \left( v_i^2 + |v_i| \frac{\sqrt{n}}{N} + \frac{n}{N^2} \right) \frac{\sqrt n}{N} + |v_i| \frac{\sqrt{n}}{N} + \frac{n}{N^2} \right) \\
&= b_i^\top B^{-1} b_i + (1-n/N) v_i^2 + \tilde{O}\left(|v_i| \frac{\sqrt n}{N} + \frac{n}{N^2}\right) ,
\end{align*}
as desired.
\end{proof}

\begin{proof}[Proof of Lemma~\ref{lem:A-j}]
Part of the proof is very similar to that of Lemma~\ref{lem:A-exp}, so we will omit some computations. 
Since 
$$
A_{-j} = \sum_{i \in [N] \setminus \{j\}}
\begin{bmatrix}
v_i^2 & v_i b_i^\top \\
v_i b_i & b_i b_i^\top 
\end{bmatrix} 
+ v_j^2 e_1 e_1^\top 
=
\begin{bmatrix}
1 & w_{-j}^\top \\
w_{-j} & B_{-j} 
\end{bmatrix} 
\quad \text{ where }
w_{-j} := \sum_{i \in [N] \setminus \{j\}}  v_i b_i ,
$$
letting 
\begin{equation}
\Phi_{-j} := w_{-j}^\top B_{-j}^{-1} w_{-j}, \qquad \phi_{-j}^{(i)} := w_{-j}^\top B_{-j}^{-1} b_i  \quad \text{ for } i \in [N] \setminus \{j\} ,  \label{eq:Phi-phi-j}
\end{equation}
we obtain from the formula for the inverse of a block matrix that, for any $i \in [N] \setminus \{j\}$, 
\begin{subequations} \label{eq:2eqs}
\begin{equation}
e_1^\top A_{-j}^{-1} e_1 = \frac{1}{1 - \Phi_{-j}} , 
\end{equation}
\begin{equation}
\yg_i^\top A_{-j}^{-1} \yg_i 
= b_i^\top B_{-j}^{-1} b_i + \frac{(\phi_{-j}^{(i)} - v_i)^2}{1-\Phi_{-j}}  . 
\end{equation}
\end{subequations}
Let $\Phi$ and $\phi_i$ be defined by \eqref{eq:Phi-phi}. 
As a result of \eqref{eq:eAe1}, \eqref{eq:A-exp}, and \eqref{eq:2eqs}, 
\begin{subequations}
\begin{equation} \label{eq:eAe}
e_1^\top (A^{-1} - A_{-j}^{-1}) e_1 = \frac{\Phi - \Phi_{-j}}{(1-\Phi) (1-\Phi_{-j})} , 
\end{equation}
\begin{align}
\yg_i^\top (A^{-1} - A_{-j}^{-1}) \yg_i 
&= b_i^\top (B^{-1} - B_{-j}^{-1}) b_i 
+ \frac{(\phi_i - v_i)^2 - (\phi_{-j}^{(i)} - v_i)^2}{1-\Phi} 
+ (\phi_{-j}^{(i)} - v_i)^2 \left( \frac{1}{1-\Phi} - \frac{1}{1-\Phi_{-j}} \right) \notag \\
&= - \frac{ (b_i^\top B_{-j}^{-1} b_j)^2 }{ 1 + b_j^\top B_{-j}^{-1} b_j }
+ \frac{(\phi_i - \phi_{-j}^{(i)}) (\phi_i + \phi_{-j}^{(i)} - 2 v_i)}{1-\Phi} 
+ (\phi_{-j}^{(i)} - v_i)^2 \frac{\Phi - \Phi_{-j}}{(1-\Phi) (1-\Phi_{-j})} ,  \label{eq:ayya-temp}
\end{align}
\end{subequations}
where the last step follows from the Sherman--Morrison formula. 

Let $U_{-j}$ denote the $(N-1) \times (n-1)$ matrix with rows $b_i^\top$ for $i \in [N] \setminus \{j\}$. Let $P_{-j} = U_{-j}(U_{-j}^\top U_{-j})^{-1}U_{-j}^\top$, which is the projection matrix for orthogonal projection onto the column span of $U_{-j}$. Since $B_{-j} = U_{-j}^\top U_{-j}$, $w_{-j} = U_{-j}^\top v_{-j}$ where $v_{-j}$ is the subvector of $v$ with its $j$th entry removed, and $b_i = U_{-j}^\top \mathbf{e}_i$ where $\mathbf{e}_i$ is the $i$th standard basis vector in $\RR^{N-1}$ indexed by $i \in [N] \setminus \{j\}$, we see that 
\[ \Phi_{-j} = v_{-j}^\top P_{-j} v_{-j}, \qquad \phi_{-j}^{(i)} = v_{-j}^\top P_{-j} \mathbf{e}_i. \]
Since the column span of $U_{-j}$ is a uniformly random $(n-1)$-dimensional subspace of $\RR^{N-1}$, we have by Lemma~\ref{lem:rand-proj} that with probability $1-N^{-\omega(1)}$,
\begin{equation}
\left| \Phi_{-j} - \|v_{-j}\|^2 \frac{n}{N} \right| \le \tilde{O}\left(\frac{\sqrt n}{N}\right), \qquad \left|\phi_{-j}^{(i)} - v_i \frac{n}{N}\right| \le \tilde{O}\left(\frac{\sqrt n}{N}\right) , 
\label{eq:phi-bd-3}
\end{equation}
and as a result,
\begin{equation}
\left|\frac{1}{1-\Phi_{-j}} \right| \le \tilde{O}\left(1\right). 
\label{eq:phi-bd-4}
\end{equation}
Plugging \eqref{eq:phi-bd-1}, \eqref{eq:phi-bd-2}, \eqref{eq:phi-bd-3}, and \eqref{eq:phi-bd-4} into \eqref{eq:eAe} yields that, with probability $1 - N^{-\omega(1)}$, 
\begin{equation*}
\left| e_1^\top (A^{-1} - A_{-j}^{-1}) e_1 \right| 
\le \tilde{O}\left( v_j^2 \frac{n}{N} + \frac{\sqrt{n}}{N} \right) ,
\end{equation*}
proving \eqref{eq:e-a-a-e}. 
Next, plugging \eqref{eq:phi-bd-1}, \eqref{eq:phi-bd-2}, \eqref{eq:phi-bd-3}, and \eqref{eq:phi-bd-4} into \eqref{eq:ayya-temp} yields that, with probability $1 - N^{-\omega(1)}$, 
\begin{equation}
\yg_i^\top (A^{-1} - A_{-j}^{-1}) \yg_i 
= - \frac{ (b_i^\top B_{-j}^{-1} b_j)^2 }{ 1 + b_j^\top B_{-j}^{-1} b_j }
+ \tilde O \left( \left| \phi_i - \phi_{-j}^{(i)} \right| \left(|v_i| + \frac{\sqrt{n}}{N} \right)
+ \left(|v_i| + \frac{\sqrt{n}}{N} \right)^2 \left( v_j^2 \frac{n}{N} + \frac{\sqrt{n}}{N} \right) \right) . \label{eq:yaay-2}
\end{equation}

Let $w$ be defined as in \eqref{eq:def-w}. 
To bound \eqref{eq:yaay-2}, note that by \eqref{eq:Phi-phi}, \eqref{eq:Phi-phi-j}, and the Sherman--Morrison formula, 
\begin{equation}
\phi_i - \phi_{-j}^{(i)} 
= w^\top (B^{-1} - B_{-j}^{-1}) b_i + (w - w_{-j})^\top B_{-j}^{-1} b_i 
= - \frac{ (w^\top B_{-j}^{-1} b_j) (b_i^\top B_{-j}^{-1} b_j) }{ 1 + b_j^\top B_{-j}^{-1} b_j } + v_j b_j^\top B_{-j}^{-1} b_i . 
\label{eq:phi-phi-j}
\end{equation}
By Lemma~\ref{lem:sum-mat}, we have $\|B_{-j}^{-1}\| \le 2$ with probability $1 - N^{-\omega(1)}$. Together with Lemma~\ref{lem:chi-sq}, this implies
\begin{equation}
\left| b_j^\top B_{-j}^{-1} b_j \right| 
\le \|b_j\|^2 \|B_{-j}^{-1}\| \le \tilde O(n/N) 
\label{eq:bbb1}
\end{equation}
with probability $1 - N^{-\omega(1)}$. 
Moreover, by the independence of $b_j$ from $b_i$ and $B_{-j}$, we have $b_i^\top B_{-j}^{-1} b_j \sim \mathcal{N}(0, \|B_{-j}^{-1} b_i\|^2 / N)$ conditional on $b_j$ and $B_{-j}$. As a result, 
\begin{equation}
\left| b_i^\top B_{-j}^{-1} b_j \right| 
\le \tilde O \left( \frac{\|B_{-j}^{-1} b_i\|}{\sqrt{N}} \right) 
\le \tilde O \left( \frac{\|b_i\|}{\sqrt{N}} \right) 
\le \tilde O \left( \frac{\sqrt{n}}{N} \right) 
\label{eq:bbb2}
\end{equation}
with probability $1 - N^{-\omega(1)}$. 
Next, we have $w = w_{-j} + v_j b_j$, where $w_{-j} = \sum_{i \in [N] \setminus \{j\}} v_i b_i$ so that $\|w_{-j}\|^2 = \tilde O(n/N)$ with probability $1 - N^{-\omega(1)}$. 
Since $b_j$ is independent from $w_{-j}$ and $B_{-j}^{-1}$, we have $w_{-j}^\top B_{-j}^{-1} b_j \sim \mathcal{N}(0, \|B_{-j}^{-1} w_{-j}\|^2 / N)$ conditional on $w_{-j}$ and $B_{-j}$. As a result, 
\begin{equation}
\left| w^\top B_{-j}^{-1} b_j \right| 
\le \left| v_j b_j^\top B_{-j}^{-1} b_j \right| + \left| w_{-j}^\top B_{-j}^{-1} b_j \right| 
\le \tilde O \left( |v_j| \frac{n}{N} \right) + \tilde O \left( \frac{\|B_{-j}^{-1} w_{-j}\|}{\sqrt{N}} \right) \le \tilde O \left( |v_j| \frac{n}{N} + \frac{\sqrt{n}}{N} \right) 
\label{eq:bbb3}
\end{equation}
with probability $1 - N^{-\omega(1)}$. 
Then \eqref{eq:bbb1}, \eqref{eq:bbb2}, \eqref{eq:bbb3}, and \eqref{eq:phi-phi-j} together imply 
$$
\frac{ (b_i^\top B_{-j}^{-1} b_j)^2 }{ 1 + b_j^\top B_{-j}^{-1} b_j } 
= \tilde O \left( \frac{n}{N^2} \right) , \qquad 
\left| \phi_i - \phi_{-j}^{(i)} \right|
= \tilde O \left( |v_j| \frac{\sqrt{n}}{N} + \frac{n}{N^2} \right) . 
$$
Combining these bounds with \eqref{eq:yaay-2}, we obtain that, with probability $1 - N^{-\omega(1)}$, 
\begin{align*}
\left| \yg_i^\top (A^{-1} - A_{-j}^{-1}) \yg_i \right| 
&\le \tilde O \left( \frac{n}{N^2} 
+ \left( |v_j| \frac{\sqrt{n}}{N} + \frac{n}{N^2} \right) \left(|v_i| + \frac{\sqrt{n}}{N} \right)
+ \left(v_i^2 + \frac{n}{N^2} \right) \left( v_j^2 \frac{n}{N} + \frac{\sqrt{n}}{N} \right) \right) \\
&\le \tilde O \left( \frac{n}{N^2} 
+ |v_i| \cdot |v_j| \frac{\sqrt{n}}{N} + \big( |v_i| + |v_j| \big) \frac{n}{N^2} 
+ v_i^2 v_j^2 \frac{n}{N} + v_j^2 \frac{n^2}{N^3} 
+ v_i^2 \frac{\sqrt{n}}{N} \right) , 
\end{align*}
which proves \eqref{eq:y-a-a-y}. 
\end{proof}

\begin{lemma}\label{lem:trace}
It holds with probability $1-o(1)$ that, for every $i \in [N]$,
\[ \left|\Tr \left( B_{-i}^{-1} \right) - \frac{n-1}{1-n/N} \right| \le \tilde{O}\left(\sqrt{\frac{n}{N}} \, \right). \]
\end{lemma}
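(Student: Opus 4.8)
The plan is to establish the bound for all of the leave-one-out matrices $B_{-i}$ at once, by comparing each $\Tr(B_{-i}^{-1})$ to $\Tr(B^{-1})$ for the full matrix $B = \sum_{j=1}^N b_j b_j^\top$ from \eqref{eq:def-ab}, and then controlling the single random variable $\Tr(B^{-1})$ via Chebyshev's inequality together with an exact second-moment computation for the inverse Wishart distribution. Passing through the full matrix $B$ is what makes the statement ``for every $i$'' cheap (no lossy union bound over the $N$ matrices $B_{-i}$), and it is precisely the Chebyshev step that forces the $1-o(1)$ error probability rather than $1-N^{-\omega(1)}$.

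For the comparison step, note that $B = B_{-i} + b_i b_i^\top \succeq B_{-i}$, so $B^{-1} \preceq B_{-i}^{-1}$ and hence $\Tr(B_{-i}^{-1}) - \Tr(B^{-1}) \ge 0$; for the matching upper bound, the Sherman--Morrison formula gives
$$
\Tr(B_{-i}^{-1}) - \Tr(B^{-1}) = \frac{b_i^\top B_{-i}^{-2} b_i}{1 + b_i^\top B_{-i}^{-1} b_i} \le b_i^\top B_{-i}^{-2} b_i \le \|B_{-i}^{-1}\|^2 \, \|b_i\|^2 .
$$
Since $N \ge Cn$ in this section, Lemma~\ref{lem:sum-mat} gives $\|B_{-i}^{-1}\| \le 2$ and Lemma~\ref{lem:chi-sq} gives $\|b_i\|^2 \le \tilde O(n/N)$, each with probability $1-N^{-\omega(1)}$; a union bound over $i \in [N]$ shows that, with probability $1-N^{-\omega(1)}$, $\big|\Tr(B_{-i}^{-1}) - \Tr(B^{-1})\big| \le \tilde O(n/N) \le \tilde O(\sqrt{n/N})$ for all $i$ simultaneously. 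It therefore suffices to prove $\big|\Tr(B^{-1}) - \tfrac{n-1}{1-n/N}\big| \le \tilde O(\sqrt{n/N})$ with probability $1-o(1)$.

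Writing $b_j = N^{-1/2} g_j$ with $g_j \sim \mathcal{N}(0,I_{n-1})$ i.i.d., we have $B = \frac1N W$ where $W := \sum_{j=1}^N g_j g_j^\top$ is a Wishart matrix ($N$ independent rank-one terms in dimension $n-1$), so $\Tr(B^{-1}) = N\,\Tr(W^{-1})$. The mean is \emph{exact}: the inverse-Wishart identity $\EE[W^{-1}] = \frac{1}{N-n} I_{n-1}$ gives $\EE[\Tr(B^{-1})] = \frac{N(n-1)}{N-n} = \tfrac{n-1}{1-n/N}$, so no bias correction is needed. For the variance I would use that each diagonal entry satisfies $(W^{-1})_{ii} \sim 1/\chi^2_{N-n+2}$, so that $\mathrm{Var}\big((W^{-1})_{ii}\big) = O\big((N-n)^{-3}\big)$, together with the standard inverse-Wishart identity $\mathrm{Cov}\big((W^{-1})_{ii},(W^{-1})_{jj}\big) = O\big((N-n)^{-4}\big)$ for $i \ne j$; summing over the $n-1$ diagonal and the $\binom{n-1}{2}$ off-diagonal index pairs and using $N \ge Cn$ yields $\mathrm{Var}(\Tr(W^{-1})) = O(n/N^3)$, hence $\mathrm{Var}(\Tr(B^{-1})) = N^2 \cdot O(n/N^3) = O(n/N)$. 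Chebyshev's inequality with threshold $t = (\log N)\sqrt{n/N}$ then gives $\PP\big(|\Tr(B^{-1}) - \tfrac{n-1}{1-n/N}| \ge t\big) \le O(n/N)/t^2 = O((\log N)^{-2}) = o(1)$, which is the desired bound; combining with the comparison step via the triangle inequality completes the proof.

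The one point that genuinely requires care is the variance bound $\mathrm{Var}(\Tr(B^{-1})) = O(n/N)$. The cheap route --- operator-norm concentration $\|NW^{-1} - I_{n-1}\| = \tilde O(\sqrt{n/N})$ --- only controls $\Tr(B^{-1}) = \Tr(NW^{-1})$ to within $\tilde O\big(n\sqrt{n/N}\big)$, which is too weak by a factor of $n$. One really needs that $\Tr(W^{-1})$, as a linear spectral statistic, fluctuates at the constant scale; here this is extracted from the exact inverse-Wishart moment identities, and it is essential that the diagonal--diagonal covariances are of order $(N-n)^{-4}$ rather than merely $(N-n)^{-3}$, since otherwise the sum over the $\binom{n-1}{2}$ pairs would inflate the variance by a factor of $n$ and the Chebyshev bound would no longer be $o(1)$. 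This weaker $1-o(1)$ probability is exactly what then propagates into Theorem~\ref{thm:spectral} and Corollary~\ref{cor:orth}.
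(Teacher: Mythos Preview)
Your proof is correct and follows essentially the same approach as the paper: use the Sherman--Morrison formula to show $\big|\Tr(B_{-i}^{-1}) - \Tr(B^{-1})\big| \le \tilde O(n/N)$ uniformly in $i$, and control $\Tr(B^{-1})$ via Chebyshev's inequality using the exact inverse-Wishart mean and variance/covariance formulas (the paper cites these directly rather than going through the $1/\chi^2_{N-n+2}$ marginal, but the computations agree). The only difference is that you perform the two steps in the opposite order.
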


\begin{proof}
Note that $NB$ follows the Wishart distribution with $N$ degrees of freedom and covariance $I_{n-1}$, and so $\frac{1}{N} B^{-1}$ follows the inverse Wishart distribution (with the same parameters). We will make use of the following formulas for moments of the inverse Wishart distribution~\cite{multivar-book}:
\[ \EE \left[ (B^{-1})_{ii} \right] = \frac{1}{1-n/N} , \]
\[ \Var \left( (B^{-1})_{ii} \right) = \frac{2N^2}{(N-n)^2(N-n-2)} , \]
\[ \Cov \left( (B^{-1})_{ii}, (B^{-1})_{jj} \right) = \frac{2N^2}{(N-n+1)(N-n)^2(N-n-2)} , \qquad i \ne j. \]
As a result, we conclude
\[ \EE \left[ \Tr(B^{-1}) \right] = \frac{n-1}{1-n/N} , \]
\[ \Var \left( \Tr(B^{-1}) \right) = \sum_{i=1}^{n-1} \Var \left( (B^{-1})_{ii} \right) + 2 \sum_{1 \le i < j \le n-1} \Cov \left( (B^{-1})_{ii}, (B^{-1})_{jj} \right) \le \tilde{O}\left(\frac{n}{N} + \frac{n^2}{N^2}\right) \le \tilde{O}\left(\frac{n}{N}\right). \]
By Chebyshev's inequality, we obtain 
\begin{equation}\label{eq:cheb}
\left|\Tr(B^{-1}) - \frac{n-1}{1-n/N}\right| \le \tilde{O}\left(\sqrt{\frac{n}{N}} \, \right).
\end{equation}

We now compare $\Tr(B_{-i}^{-1})$ to $\Tr(B^{-1})$. Using the Sherman--Morrison rank-one update formula and the cyclic property of the trace,
\[ \Tr(B^{-1}) = \Tr\left(B_{-i}^{-1} - \frac{B_{-i}^{-1} b_i b_i^\top B_{-i}^{-1}}{1 + b_i^\top B_{-i}^{-1} b_i}\right) = \Tr(B_{-i}^{-1}) - \frac{b_i^\top B_{-i}^{-2} b_i}{1 + b_i^\top B_{-i}^{-1} b_i}. \]
We have with probability $1-N^{-\omega(1)}$ that $\|b_i\|^2 = \tilde O(n/N)$ and $\|B_{-i}^{-1}\| = \tilde O(1)$, which implies 
\begin{equation*}  
\left|\Tr(B_{-i}^{-1}) - \Tr(B^{-1})\right| \le \tilde O\left(\frac{n}{N}\right). 
\end{equation*}
The result follows by combining this with~\eqref{eq:cheb}.
\end{proof}

\begin{lemma}\label{lem:B-exp}
It holds with probability $1-o(1)$ that, for every $i \in [N]$,
\[ \Big| b_i^\top B^{-1} b_i - \|b_i\|^2 \Big| \le \tilde{O}\left(\frac{n}{N^{3/2}}\right) . \]
\end{lemma}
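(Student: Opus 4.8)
The plan is a leave-one-out argument via the Sherman--Morrison formula, exploiting that $b_i$ is independent of $B_{-i}$ and pushing the estimates to second order so that a cancellation becomes visible. Since $B = B_{-i} + b_i b_i^\top$ by~\eqref{eq:def-ab-i}, Sherman--Morrison gives $b_i^\top B^{-1} b_i = s_i/(1+s_i)$ with $s_i := b_i^\top B_{-i}^{-1} b_i \ge 0$. Writing $\tau_i := \|b_i\|^2$ and $r_i := s_i - \tau_i = b_i^\top(B_{-i}^{-1} - I_{n-1}) b_i$, a short computation yields
\[ b_i^\top B^{-1} b_i - \|b_i\|^2 = \frac{s_i - \tau_i(1+s_i)}{1+s_i} = \frac{r_i(1-\tau_i) - \tau_i^2}{1+s_i} . \]
Since $s_i \ge 0$ the denominator is at least $1$, so it suffices to prove $|r_i(1-\tau_i) - \tau_i^2| \le \tilde{O}(n/N^{3/2})$ for all $i \in [N]$ simultaneously.

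First I would collect the following facts, each holding for all $i$ after a union bound (with probability $1-N^{-\omega(1)}$ except for~(iii), which is only $1-o(1)$, so the final probability is $1-o(1)$ as claimed). (i) $\|B_{-i} - I_{n-1}\| = \tilde{O}(\sqrt{n/N})$ and $\|B_{-i}^{-1}\| = \tilde{O}(1)$, from Lemma~\ref{lem:sum-mat} (or Lemma~\ref{lem:cov}) exactly as in the earlier proofs; consequently, using $B_{-i}^{-1} - I_{n-1} = B_{-i}^{-1}(I_{n-1} - B_{-i})$, also $\|B_{-i}^{-1} - I_{n-1}\| = \tilde{O}(\sqrt{n/N})$. (ii) $\tau_i = \frac{n-1}{N} + \tilde{O}(\sqrt{n}/N)$ by Lemma~\ref{lem:chi-sq}. (iii) $\Tr(B_{-i}^{-1}) = \frac{n-1}{1-n/N} + \tilde{O}(\sqrt{n/N})$ by Lemma~\ref{lem:trace}, equivalently $\frac1N \Tr(B_{-i}^{-1} - I_{n-1}) = \frac{(n-1)n}{N(N-n)} + \tilde{O}(\sqrt{n}/N^{3/2})$. (iv) Conditionally on $B_{-i}$, which is independent of $b_i$, the quantity $r_i$ is a Gaussian quadratic form, so diagonalizing $B_{-i}^{-1} - I_{n-1}$ and applying Bernstein's inequality (Lemma~\ref{lem:bern}) gives $\bigl|r_i - \frac1N \Tr(B_{-i}^{-1} - I_{n-1})\bigr| = \tilde{O}\bigl(\frac1N(\|B_{-i}^{-1}-I_{n-1}\|_F + \|B_{-i}^{-1}-I_{n-1}\|)\bigr)$; on the event in~(i) we have $\|B_{-i}^{-1}-I_{n-1}\|_F \le \sqrt{n-1}\,\|B_{-i}^{-1}-I_{n-1}\| = \tilde{O}(n/\sqrt{N})$, so this bound is $\tilde{O}(n/N^{3/2})$. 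Combining (iii) and (iv), $r_i = \frac{(n-1)n}{N(N-n)} + \tilde{O}(n/N^{3/2})$ for all $i$.

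Then --- the crux --- I would write $\tau_i = \frac{n-1}{N} + \xi_i$ with $|\xi_i| \le \tilde{O}(\sqrt{n}/N)$ and $r_i = \frac{(n-1)n}{N(N-n)} + \zeta_i$ with $|\zeta_i| \le \tilde{O}(n/N^{3/2})$, substitute into $r_i(1-\tau_i) - \tau_i^2$, and expand. The purely deterministic part equals
\[ \frac{(n-1)n}{N(N-n)}\Bigl(1 - \frac{n-1}{N}\Bigr) - \frac{(n-1)^2}{N^2} = \frac{n-1}{N(N-n)} = O(n/N^2) , \]
so the two $\Theta(n^2/N^2)$ contributions cancel to leading order; every remaining term carries a factor $\xi_i$ or $\zeta_i$ and is $\tilde{O}(n/N^{3/2})$ or smaller, where the standing assumption $N \ge Cn$ is used to dominate factors such as $n/(N-n)$, $n^{5/2}/N^3$, and $n^{3/2}/N^2$ by $n/N^{3/2}$. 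Hence $|r_i(1-\tau_i) - \tau_i^2| = \tilde{O}(n/N^{3/2})$, and together with $1+s_i \ge 1$ this completes the proof.

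The main obstacle is precisely this cancellation. Bounding $b_i^\top(B^{-1} - I_{n-1}) b_i$ crudely by $\|b_i\|^2\,\|B^{-1} - I_{n-1}\|$, or using concentration of $s_i$ by itself, only yields $\tilde{O}(n^{3/2}/N^{3/2})$ or $\tilde{O}(\sqrt{n}/N)$, both of which are too large. One genuinely needs to (a) keep $r_i = s_i - \tau_i$ rather than $s_i$, so that the small Frobenius norm of $B_{-i}^{-1} - I_{n-1}$ is available in step~(iv), and (b) retain the \emph{exact} value $\frac{(n-1)n}{N(N-n)}$ of the conditional mean of $r_i$ --- which is exactly what Lemma~\ref{lem:trace} supplies --- so that it cancels $\tau_i^2$ at leading order. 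The remaining lower-order bookkeeping is routine once $N \ge Cn$ is in force.
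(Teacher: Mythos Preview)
Your proof is correct and follows essentially the same route as the paper's: Sherman--Morrison to reduce to $\psi_i/(1+\psi_i)$, Hanson--Wright on the leave-one-out quadratic form, and Lemma~\ref{lem:trace} to pin down the conditional mean. The only cosmetic difference is that the paper centers the quadratic form at $\frac{1}{1-n/N} I_{n-1}$ rather than $I_{n-1}$, obtaining $\psi_i = \frac{\|b_i\|^2}{1-n/N} + \tilde O(n/N^{3/2})$ directly; then the cancellation $\frac{1}{1-n/N}\cdot\bigl(1+\frac{n}{N-n}\bigr)^{-1}=1$ is exact and no deterministic residual like your $\frac{n-1}{N(N-n)}$ appears, but this is purely bookkeeping.
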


\begin{proof}
Using the Sherman--Morrison rank-one update formula,
\begin{equation}\label{eq:B-exp}
b_i^\top B^{-1} b_i = b_i^\top \left(B_{-i}^{-1} - \frac{B_{-i}^{-1} b_i b_i^\top B_{-i}^{-1}}{1 + b_i^\top B_{-i}^{-1} b_i}\right) b_i = \psi_i - \frac{\psi_i^2}{1+\psi_i}
= \frac{\psi_i}{1+\psi_i}
\end{equation}
where
\[ \psi_i := b_i^\top B_{-i}^{-1} b_i. \]
Note that $b_i$ is independent from $B_{-i}^{-1}$. 
Let us first condition on $B_{-i}^{-1}$. 
Then we have 
$$
\EE \left[ \psi_i \mid B_{-i}^{-1} \right] = \frac{\Tr(B_{-i}^{-1})}{N} .
$$
Moreover, by the Hanson--Wright inequality (Lemma~\ref{lem:hs}), we obtain that with probability $1-N^{-\omega(1)}$ over $b_i$,
\begin{equation}
\bigg| \psi_i - \frac{\Tr(B_{-i}^{-1})}{N} \bigg| 
\le \|B_{-i}^{-1}\|_F \cdot \tilde O \bigg( \frac{1}{N} \bigg) 
\le \|B_{-i}^{-1}\| \cdot \tilde O \bigg( \frac{\sqrt{n} }{N} \bigg) ,
\label{eq:hs-1}
\end{equation}
\begin{align}
\bigg| \psi_i - \frac{\|b_i\|^2}{1-n/N} \bigg|
&= \bigg| b_i^\top \Big( B_{-i}^{-1} - \frac{I_{n-1}}{1-n/N} \Big) b_i \bigg| \notag \\
&\le \bigg| \frac{\Tr(B_{-i}^{-1})}{N} - \frac{n-1}{N(1-n/N)} \bigg| 
+ \bigg\| B_{-i}^{-1} - \frac{I_{n-1}}{1-n/N}  \bigg\| \cdot \tilde O \bigg( \frac{ \sqrt{n} }{N} \bigg) .
\label{eq:hs-2}
\end{align}
Next, Lemma~\ref{lem:sum-mat} gives that, with probability $1-N^{-\omega(1)}$, for all $i \in [N]$,
\[ \bigg\| B_{-i}^{-1} - \frac{I_{n-1}}{1-n/N}  \bigg\| \le \|B_{-i}^{-1} - I_{n-1}\| + O \left( \frac{n}{N} \right) \le \tilde{O}\left(\sqrt{\frac{n}{N}} \, \right) , \]
and Lemma~\ref{lem:trace} gives that, with probability $1-o(1)$, for all $i \in [N]$,
\[ \bigg| \frac{\Tr(B_{-i}^{-1})}{N} - \frac{n}{N-n} \bigg|
\le \bigg| \frac{\Tr(B_{-i}^{-1})}{N} - \frac{n-1}{N(1-n/N)} \bigg| + O \left( \frac{n}{N^2} \right)
\le \tilde{O}\left( \frac{\sqrt{n}}{N^{3/2}} \right) . \]
Combining the above two bounds with \eqref{eq:hs-1} and \eqref{eq:hs-2} respectively, 
we obtain that, with probability $1-o(1)$, for all $i \in [N]$,
\begin{equation}\label{eq:psi-2}
\left| \psi_i - \frac{n}{N-n} \right| \le \tilde{O}\left(\frac{\sqrt n}{N}\right) \quad \Longrightarrow \quad \left|\frac{1}{1+\psi_i} - \left(1 + \frac{n}{N-n} \right)^{-1}\right| \le \tilde{O}\left(\frac{\sqrt n}{N}\right) , 
\end{equation}
\begin{equation}\label{eq:psi-1}
\left| \psi_i - \frac{\|b_i\|^2}{1-n/N} \right| 
\le \tilde{O}\left(\frac{n}{N^{3/2}}\right) . 
\end{equation}
Finally, plugging the bounds~\eqref{eq:psi-2},~\eqref{eq:psi-1}, and $\|b_i\|^2 \le 2n/N$ (with probability $1-N^{-\omega(1)}$ by Lemma~\ref{lem:chi-sq}) into~\eqref{eq:B-exp} yields
\begin{equation*}
b_i^\top B^{-1} b_i 
= \psi_i \cdot \frac{1}{1+\psi_i}
= \frac{\|b_i\|^2}{1-n/N} \left(1 + \frac{n}{N-n} \right)^{-1} + \Lambda_i 
= \|b_i\|^2 + \Lambda_i
\end{equation*}
for an error term that satisfies
\[ |\Lambda_i| \le \tilde{O}\left(\frac{n}{N^{3/2}} + \frac{n^{3/2}}{N^2}\right) \le \tilde{O}\left(\frac{n}{N^{3/2}}\right) \]
as desired. 
\end{proof}

 \begin{proof}[Proof of Lemma~\ref{lem:lev}]
Combining \eqref{eq:y-a-y-1} and Lemma~\ref{lem:B-exp}, we have with probability $1-o(1)$ that for every $i \in [N]$,
\begin{align*}
\Big| \yg_i^\top A^{-1} \yg_i - \|\yg_i\|^2 \Big| 
&\le \Big| b_i^\top B^{-1} b_i + (1-n/N) v_i^2 - v_i^2 - \|b_i\|^2 \Big| + \tilde{O}\left(\frac{n}{N^2} + |v_i|\frac{\sqrt n}{N}\right) \\
&\le v_i^2 \frac{n}{N} + \tilde{O}\left(\frac{n}{N^{3/2}} + |v_i|\frac{\sqrt n}{N}\right) ,
\end{align*}
which proves \eqref{eq:lev}. 
Moreover, we have $\left| \|b_i\|^2 - n/N \right| = \tilde O( \frac{\sqrt{n}}{N} )$ with probability $1 - N^{-\omega(1)}$ by Lemma~\ref{lem:chi-sq}. 
Then \eqref{eq:lev2} follows from the concentration of $\|y_i\|^2 = v_i^2 + \|b_i\|^2$. 

Similarly, by \eqref{eq:b-a-b-1} and Lemma~\ref{lem:B-exp}, we have with probability $1-o(1)$ that for every $i \in [N]$, 
\begin{align*}
\Big| \tilde b_i^\top A^{-1} \tilde b_i - \frac{n}{N} \Big| 
&\le \Big| b_i^\top B^{-1} b_i - \frac{n}{N} \Big| + \frac{v_i^2 n^2}{N(N-n)} + \tilde O \left( |v_i| \frac{n^{3/2}}{N^2} + \frac{n}{N^2} \right) \\
&\le \Big| \|b_i\|^2 - \frac{n}{N} \Big| + \tilde O \left( \frac{n}{N^{3/2}} \right) + \tilde O \left( v_i^2 \frac{n^2}{N^2} + |v_i| \frac{n^{3/2}}{N^2} + \frac{n}{N^2} \right) \\
&\le \tilde O \left( \frac{\sqrt{n}}{N} + v_i^2 \frac{n^2}{N^2} \right) ,
\end{align*}
which finishes the proof. 
\end{proof}

\subsection{Proof of Corollary~\ref{cor:orth-bg}}

By assumption, we have $\frac{1}{N} \ll \rho \le 1 - c$ for a constant $c \in (0, 0.1)$. 
Since $v' \sim \BG(N, \rho)$ and $v = v'/\|v'\|$, both vectors $v$ and $v'$ are supported on a set $\mathcal{S} \subset [N]$ with $|\mathcal{S}| \sim \mathsf{Binomial}(N, \rho)$. 
Since $\rho \gg 1/N$, it follows from Lemma~\ref{lem:bern-bdd} that with probability $1 - N^{-\omega(1)}$,
$$
\big| |\mathcal{S}| - N \rho \big| 
\le \tilde O \left( \sqrt{N \rho} \, \right) .
$$
Conditional on $\mathcal{S}$, the nonzero entries of $v'$ are independent $\mathcal{N}(0, \frac{1}{N \rho})$ variables. 
As a result, it follows from a standard maximal inequality and Lemma~\ref{lem:gauss-lp} (with $p = 2, 4$) that 
$$
\|v'\|_\infty \le \tilde{O} \left( \frac{1}{ \sqrt{N \rho} } \right), \qquad
\left| \|v'\|^2 - \frac{|\mathcal{S}|}{N \rho} \right| \le \tilde{O} \left( \frac{ \sqrt{|\mathcal{S}|} }{N \rho} \right), \qquad
\left| \|v'\|_4^4 - \frac{3 |\mathcal{S}|}{(N \rho)^2} \right| \le \tilde{O} \left( \frac{ \sqrt{|\mathcal{S}|} }{(N \rho)^{2}} \right),
$$
with probability $1 - N^{-\omega(1)}$ conditional on $\mathcal{S}$. 
Therefore, with probability $1 - N^{-\omega(1)}$, we have
\begin{equation}
\left| \|v'\|^2 - 1 \right| \le \tilde O\left( \frac{1}{\sqrt{N \rho}} \right) , \qquad
\left| \|v'\|_4^4 - \frac{3}{N \rho} \right| \le \tilde{O} \left( \frac{ 1 }{(N \rho)^{3/2}} \right), 
\label{eq:bds-1}
\end{equation}
from which it follows that 
\begin{equation}
\|v\|_\infty = \frac{\|v'\|_\infty}{\|v'\|} \le \tilde{O} \left( \frac{1}{ \sqrt{N \rho} } \right), \qquad
\|v\|_4^4 = \frac{\|v'\|_4^4}{\|v'\|^4} \ge \frac{2}{N \rho} .
\label{eq:bds-2}
\end{equation}

With the above estimates established, we now verify the assumptions of Theorem~\ref{thm:spectral} (so that it can be applied). 
First, by \eqref{eq:bds-1}, 
$$
\left| \|v'\|_4^4 - \|v\|_4^4 \right| 
= \|v'\|_4^4 \cdot \left| 1 - \frac{1}{\|v'\|^4} \right| 
= \|v'\|_4^4 \cdot \left| \frac{(\|v'\|^2 - 1) (\|v'\|^2 + 1)}{\|v'\|^4} \right|
\le \tilde{O} \left( \frac{1}{(N \rho)^{3/2}} \right) ,
$$
and, as a result, 
$$
\left| \|v\|_4^4 - \frac{3}{N} \right| 
\ge \left| \frac{3}{N \rho} - \frac{3}{N} \right| - \left| \|v'\|_4^4 - \frac{3}{N \rho} \right| - \left| \|v'\|_4^4 - \|v\|_4^4 \right|
\ge \frac{3}{N} \frac{1 - \rho}{\rho} - \tilde{O} \left( \frac{ 1 }{(N \rho)^{3/2}} \right)
\ge \frac{2 c}{N} 
$$
since $\rho \le 1-c$. 
Second, combining \eqref{eq:bds-2} with the assumption $\xi = \frac{n \rho}{\sqrt{N}}  + \sqrt{\frac{n}{N}} \ll 1$, we see that $\eps = \frac{1}{ \|v\|_4^4 } \frac{n}{N^{3/2}}  + \frac{\|v\|_\infty}{ \|v\|_4^2 } \sqrt{\frac{n}{N}}$ defined in Theorem~\ref{thm:spectral} satisfies 
$$
\eps 
\le \tilde{O}
\bigg( N \rho \, \frac{n}{N^{3/2}} +  \sqrt{\frac{n}{N}} \, \bigg) = \tilde{O}
( \xi ) 
\ll 1.
$$
Consequently, by Theorem~\ref{thm:spectral}, up to a sign flip of $\hat{\ub}$, it holds with probability $1 - o(1)$ that 
\begin{equation*}
\left| \hat y_j^\top \hat{\ub} 
- v_j \right|
\le \tilde O \left( \eps \cdot \Big( |v_j| + \frac{1}{\sqrt{N}} \Big) \right)
\le \tilde O \left( \frac{\xi}{\sqrt{N \rho}} \right)   \quad \text{ for all } j \in [N],
\end{equation*}
and
$$
\|\Yh \hat{\ub} - v\| \le \tilde{O} (\xi) .
$$

\subsection{Proof of Corollary~\ref{cor:orth-br}}

By assumption, we have $\rho \gg \frac{1}{N}$ and $|\rho - \frac{1}{3}| \ge c$ for a constant $c \in (0, 0.1)$. 
Since $v' \sim \BR(N, \rho)$, by Lemma~\ref{lem:bern-bdd}, there is an event $\mathcal{E}$ of probability $1 - N^{-\omega(1)}$ on which 
$$
\big| \|v\|_0 - N \rho \big|
= \big| \|v'\|_0 - N \rho \big| 
\le \tilde O \left( \sqrt{N \rho} \, \right) ,
$$
where $v = v'/\|v'\|$. 
Note that the nonzero entries of $v$ are $\pm 1/\sqrt{\|v\|_0}$, so we obtain
$$
\|v\|_\infty^2 \le \frac{2}{N \rho},
\qquad
\|v\|_4^4 \ge \frac{1}{2N\rho},
\qquad
\left|\|v\|_4^4 - \frac{3}{N}\right| 
\ge \frac{3c}{2 N \rho}.
$$
Conditional on the event $\mathcal{E}$, the quantity $\eps$ defined in \eqref{eq:spec-assume} satisfies 
$$
\eps 
\le O
\bigg( N \rho \, \frac{n}{N^{3/2}} +  \sqrt{\frac{n}{N}} \bigg) \le O
\bigg( \frac{n \rho}{\sqrt{N}}  + \sqrt{\frac{n}{N}} \, \bigg) 
\ll 1
$$
by the assumption $\frac{n \rho}{\sqrt{N}}  + \sqrt{\frac{n}{N}} \ll 1$. 
By Theorem~\ref{thm:spectral}, up to a sign flip of $\hat{\ub}$, it holds with probability at least $1 - o(1)$ that 
\begin{equation*}
\left| \hat y_j^\top \hat{\ub} 
- v_j \right|
\le \tilde O \left( \eps \cdot \Big( |v_j| + \frac{1}{\sqrt{N}} \Big) \right)
\le \tilde O \left( \eps \cdot \frac{1}{\sqrt{N \rho}} \right)  
\ll \frac{1}{\sqrt{N \rho}} 
\end{equation*}
for all $j \in [N]$. 
Since the nonzero entries of $v$ are $\pm 1/\sqrt{\|v\|_0} \approx \pm 1/\sqrt{N \rho}$, we can exactly recover the support of $v$ and the signs of its entries by thresholding at the level $0.5 \cdot \max_{i \in [N]} \big| \hat y_i^\top \hat{\ub} \, \big|$. 
A normalization then recovers $v$.

\subsection{Proof of Theorem~\ref{thm:reduction}}
\label{sec:reduction-pf}

We first show that the test $\tilde\psi$ succeeds under distribution $\cP$ (in Problem~\ref{prob:detect}). We have $v \sim \BG(N, \rho)$, and so there is a random subset $\mathcal{S} \subset [N]$ with support size $|\mathcal{S}| \sim \mathsf{Binomial}(N, \rho)$ and, conditional on $\mathcal{S}$, the restriction of $v$ on $\mathcal{S}$ is a $\mathcal{N} \big(0, \frac{1}{\sqrt{N\rho}} I_{|\mathcal{S}|}\big)$ vector.
Since $\rho \gg 1/N$, it follows from Lemma~\ref{lem:bern-bdd} that with probability $1 - N^{-\omega(1)}$,
$$
\Big| |\mathcal{S}| - N \rho \Big|
\le \tilde O \left( \sqrt{N \rho} \right) .
$$
Further, conditional on $\mathcal{S}$, it follows from Lemma~\ref{lem:gauss-lp} (with $p = 1, 2$) that with probability $1 - N^{-\omega(1)}$, 
$$
\left| \|v\|^2 - \frac{|\mathcal{S}|}{N \rho} \right| \le \tilde O \left( \frac{\sqrt{|\mathcal{S}|}}{N \rho} \right) , \qquad
\left| \|v\|_1 - |\mathcal{S}| \sqrt{\frac{2}{\pi N \rho}} \, \right| \le \tilde O\left( \sqrt{ \frac{|\mathcal{S}|}{N \rho} } \, \right) . 
$$
The above bounds together imply that, with probability $1 - N^{-\omega(1)}$, 
$$
\left| \|v\|^2 - 1 \right| \le \tilde O \left( \frac{1}{\sqrt{N \rho}} \right) , \qquad
\left| \|v\|_1 - \sqrt{\frac{2}{\pi} N \rho} \right| \le \tilde O\left( 1 \right) . 
$$

Since $\|\tilde v - v\| \le c_3$ by assumption, we obtain 
$$
\big| \|\tilde v\| - 1 \big|
\le \big| \|v\| - 1 \big| + \|\tilde v - v\| 
\le 2 c_3 
$$
and
$$
\left| \|\tilde v\|_1 - \sqrt{\frac{2}{\pi} N \rho} \right|
\le \left| \|v\|_1 - \sqrt{\frac{2}{\pi} N \rho} \right| + \|\tilde v - v\|_1 
\le \tilde O(1) + \sqrt{N} \|\tilde v - v\| 
\le 2 c_3 \sqrt{N} . 
$$
Combining the above two bounds yields
$$
\left| \frac{\|\tilde v\|_1}{\|\tilde v\|} - \sqrt{\frac{2}{\pi} N \rho} \right|
\le \left| \|\tilde v\|_1 - \sqrt{\frac{2}{\pi} N \rho} \right|
+ \left| \frac{\|\tilde v\|_1}{\|\tilde v\|} - \|\tilde v\|_1 \right|
\le 2 c_3 \sqrt{N} + \|\tilde v\|_1 \left| \frac{1}{\|\tilde v\|} - 1 \right|
\le 5 c_3 \sqrt{N} . 
$$ 
If $\rho \le 1 - c_1$ and $c_3 = c_3(c_1) > 0$ is sufficiently small, then we have 
$$
\left| \frac{\|\tilde v\|_1}{\|\tilde v\|} - \sqrt{\frac{2}{\pi} N} \right| 
\ge \left| \sqrt{\frac{2}{\pi} N} - \sqrt{\frac{2}{\pi} N \rho} \right| - \left| \frac{\|\tilde v\|_1}{\|\tilde v\|} - \sqrt{\frac{2}{\pi} N \rho} \right|
\ge \sqrt{\frac{2}{\pi} N} \cdot \frac{c_1}{2} - 5 c_3 \sqrt{N}
\ge \frac{c_1}{4} \sqrt{N}.
$$
Therefore, the test $\tilde \psi$ succeeds in identifying $\cP$ with probability at least $1 - \delta - N^{-\omega(1)}$.

We now show that $\tilde\psi$ succeeds under distribution $\cQ$. Now the matrix $\tilde Y$ has i.i.d.\ $\mathcal{N}(0, \frac{1}{N} I_N)$ entries. 
Thus, by Lemma~\ref{lem:l2-l1}, for any constant $c_4 > 0$, 
$$
\PP \Bigg\{ \max_{u \in \RR^n} \Bigg| \frac{\|\tilde Y u\|}{\|u\|} - 1 \Bigg| \ge c_4 \Bigg\} \le N^{-\omega(1)} 
$$
and
$$
\PP \Bigg\{ \max_{u \in \RR^n} \Bigg| \frac{\|\tilde Y u\|_1}{\|u\|} - \sqrt{\frac{2 N}{\pi}} \Bigg| \ge c_4 \sqrt{N} \Bigg\} \le N^{-\omega(1)} ,
$$ 
provided $N \ge C_2 n$ for a constant $C_2 = C_2(c_4) > 0$. 
As a result, with probability $1 - N^{-\omega(1)}$, we have 
\begin{equation*}
\Bigg| \frac{\|\tilde Y u\|_1}{\|\tilde Y u\|} - \sqrt{\frac{2 N}{\pi}} \Bigg| 
\le \Bigg| \frac{\|\tilde Y u\|_1}{\|u\|} - \sqrt{\frac{2 N}{\pi}} \Bigg| 
+ \Bigg| \frac{\|\tilde Y u\|_1}{\|\tilde Y u\|} - \frac{\|\tilde Y u\|_1}{\|u\|} \Bigg| 
\le c_4 \sqrt{N} + \frac{\|\tilde Y u\|_1}{\|u\|} \Bigg| \frac{\|u\|}{\|\tilde Y u\|} - 1 \Bigg|
\le 2 c_4 \sqrt{N}  
\end{equation*}
for every nonzero $u \in \RR^n$, provided $c_4$ is sufficiently small. 
In particular, since $\tilde v$ is in the column span of $\tilde Y$, we have $\tilde v = \tilde Y u$ for some $u \in \RR^n$. 
The test $\tilde \psi$ then succeeds in identifying $\cQ$ with probability at least $1 - N^{-\omega(1)}$, provided $c_4 = c_4(c_1) > 0$ is sufficiently small.

\subsection{Proof of Theorem~\ref{thm:detect-upper}}
\label{sec:detect-upper-pf}

Under $\cP$ in Problem~\ref{prob:detect}, by Proposition~\ref{prop:gauss-spec-norm}, it holds with probability at least $1-\delta$ that
\begin{equation}
\left| \|\Mt\| - \Big| \|v\|_4^4 - \frac{3}{N} \Big| \right| 
= \left| \|M\| - \Big| \|v\|_4^4 - \frac{3}{N} \Big| \right|
\le \eta ,
\label{eq:bound-1}
\end{equation}
where $\eta$ is defined by \eqref{eq:def-eta}. 
Since $v \sim \BG(N, \rho)$ where $\frac{1}{N} \ll \rho \le 1 - c$, similar to the proof of Corollary~\ref{cor:orth-bg}, we can use Lemmas~\ref{lem:bern-bdd} and~\ref{lem:gauss-lp} to show that the following holds with probability $1 - N^{-\omega(1)}$:
\begin{itemize}
\item
$\|v\|_0 \le 2 N \rho;$

\item
$\|v\|_\infty \le \tilde{O} \big( \frac{1}{\sqrt{N \rho}} \big);$

\item
$\left| \|v\|^2 - 1 \right| \le \tilde{O} \big( \frac{1}{\sqrt{N \rho}} \big)$; 

\item
$\|v\|_4^4 \le \|v\|_0 \, \|v\|_\infty^4 \le \tilde{O} \big( \frac{1}{N \rho} \big)$;

\item
$\|v\|_6^6 \le  \|v\|_0 \, \|v\|_\infty^6 \le \tilde{O} \big( \frac{1}{(N \rho)^2} \big) ;$
\end{itemize}
and, crucially, 
\begin{equation}
\left|\|v\|_4^4 - \frac{3}{N}\right| 
\ge \left|\frac{3}{N \rho} - \frac{3}{N}\right| - \left|\|v\|_4^4 - \frac{3}{N \rho}\right| 
\ge \frac{3}{N} \frac{1 - \rho}{\rho} - \tilde{O} \left( \frac{ 1 }{(N \rho)^{3/2}} \right)
\ge \frac{2 c}{N \rho} .
\label{eq:bound-2}
\end{equation}
Then the quantity $\eta$ defined in \eqref{eq:def-eta} satisfies 
\begin{align*}
\frac{\eta}{\left| \|v\|_4^4 - \frac{3}{N} \right|}
&\le \frac{N \rho}{2 c} \cdot \tilde{O} \bigg( \frac{n}{N^{3/2}} 
+ \frac{\sqrt{n}}{N^{3/2} \sqrt{\rho}} 
+ \frac{ \sqrt{n}}{N^{3/2} \rho} + \frac{n}{N^2 \rho} + \frac{ n^{3/2} }{N^2} + \frac{1}{N^{3/2} \sqrt{\rho}} \bigg) \notag \\
&\le \tilde{O} \bigg( \frac{n \rho}{\sqrt{N}} + \sqrt{\frac{n}{N}} \, \bigg) 
\le \frac 12     \notag
\end{align*} 
by the assumptions $n \rho \ll \sqrt{N}$ and $n \ll N$. 
Combining the above bound with \eqref{eq:bound-1} and \eqref{eq:bound-2} gives
$$
\|\tilde M\| 
\ge \left|\|v\|_4^4 - \frac{3}{N}\right| - \eta 
\ge \frac 12 \left|\|v\|_4^4 - \frac{3}{N}\right| 
\ge \frac{c}{N \rho} . 
$$

Under $\cQ$ in Problem~\ref{prob:detect}, we continue to let $\Mg$ be defined by \eqref{eq:def-mg} so that
\begin{align*}
\|\Mt\| = \|\Mg\| 
&= \left\| \sum_{i=1}^N \left( \|\yg_i\|^2 - \frac{n-1}{N} \right) \yg_i \yg_i^\top - \frac{3}{N} I_n \right\| \\
&\le \left\| \sum_{i=1}^N \left( \|\yg_i\|^2 - \frac{n}{N} \right) \yg_i \yg_i^\top - \frac{2}{N} I_n \right\| 
+ \left\| \sum_{i=1}^N \frac{1}{N} \yg_i \yg_i^\top - \frac{1}{N} I_n \right\| , 
\end{align*}
where $\yg_i$ are i.i.d.\ $\mathcal{N}(0, \frac{1}{N} I_n)$ vectors. 
Therefore, we can apply Lemmas~\ref{lem:l5} and~\ref{lem:sum-mat} to bound the two terms respectively to obtain 
$$
\|\Mt\| \le \tilde{O} \left( \frac{n}{N^{3/2}} + \frac{n^{3/2}}{N^2} + \frac{\sqrt{n}}{N^{3/2}} \right) 
\le \tilde{O} \left( \frac{n}{N^{3/2}} \right) 
\le \frac{c}{4 N \rho}
$$
with probability $1 - N^{-\omega(1)}$, under the condition $n \rho \ll \sqrt{N}$.

\subsection{Preliminaries for the Lower Bounds}
\label{sec:prelim-lower}

We state and prove some preliminary results that are used in Section~\ref{sec:low-deg-pf}.

\begin{lemma}\label{lem:ld-hermite}
Consider the distribution $\cP$ in Problem~\ref{prob:alt} and suppose the first $D$ moments of $\nu$ are finite. Then
$$
\Adv_{\le D}^2 = \sum_{\substack{\alpha \in \NN^{N \times n} \\ |\alpha| \le D}} \left(\EE_{y \sim \mathcal{P}}[H_\alpha(y)]\right)^2.
$$
\end{lemma}

\begin{proof}
A polynomial $f: \RR^{N \times n} \to \RR$ of degree at most $D$ can be expanded as $f = \sum_{|\alpha| \le D} \hat f_\alpha H_\alpha$ where $\alpha \in \NN^{N \times n}$ and $\hat f_\alpha \in \RR$. Let $c_\alpha = \EE_{\mathcal{P}}[H_\alpha]$. Treating $\hat f$ and $c$ as vectors indexed by $\alpha$, we have from~\eqref{eq:adv} that
$$\Adv_{\le D} = \max_{\hat f} \frac{\langle \hat f,c \rangle}{\|\hat f\|}$$
and so the optimizer is $\hat f = c$, from which the result $\Adv_{\le D} = \|c\|$ follows.
\end{proof}

\section{Probability and Linear Algebra Tools}

We state some basic tools from probability and linear algebra, all of which are standard. 
Recall that (see, for example, Proposition~2.7.1 of \cite{vershynin2018high}) a random variable $X$ is sub-exponential with parameter $K>0$ if 
\begin{equation}
\EE[\exp(\lambda X)] \le \exp(K^2 \lambda^2), \quad \text{ for all } \lambda \in [-1/K, 1/K] . 
\label{eq:def-sub-exp}
\end{equation}

\begin{lemma}[Bernstein's inequality for sub-exponential distributions, Theorem~2.8.1 of \cite{vershynin2018high}]
\label{lem:bern}
Let $X_1$, $\dots$, $X_N$ be independent, zero-mean random variables. Suppose that $X_i$ is sub-exponential with parameter $K_i > 0$ for each $i \in [N]$. 
Then there exists a universal constant $C>0$ such that for any $\delta \in (0,1)$, we have
$$
\PP \left\{ \left| \sum_{i=1}^N X_i \right| \ge C \left( \sqrt{ \sum_{i=1}^N K_i^2 \cdot \log \frac{1}{\delta} } + \max_{i \in [N]} K_i \cdot \log \frac{1}{\delta} \right) \right\} \le \delta .
$$
\end{lemma}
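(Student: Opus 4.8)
The plan is to obtain Lemma~\ref{lem:bern} from the exponential moment bound \eqref{eq:def-sub-exp} by the classical Chernoff argument: control the moment generating function of the sum, optimize over the exponential tilting parameter, and invert the resulting tail bound to express the deviation threshold in terms of $\delta$. Write $S := \sum_{i=1}^N X_i$, $\sigma^2 := \sum_{i=1}^N K_i^2$, and $K_\ast := \max_{i \in [N]} K_i$. It suffices to control the one-sided probability $\PP\{S \ge t\}$: applying the same bound to $-X_1,\dots,-X_N$ (which are again zero-mean and sub-exponential with parameters $K_i$) and taking a union bound yields the two-sided statement at the cost of a factor $2$, which is absorbed into $C$.

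For any $\lambda \in (0,\,1/K_\ast]$, the Chernoff bound, independence, and \eqref{eq:def-sub-exp} (valid for each $i$ because $\lambda \le 1/K_\ast \le 1/K_i$) give
\[
\PP\{S \ge t\} \;\le\; e^{-\lambda t}\,\EE\big[e^{\lambda S}\big] \;=\; e^{-\lambda t}\prod_{i=1}^N \EE\big[e^{\lambda X_i}\big] \;\le\; e^{-\lambda t}\prod_{i=1}^N e^{K_i^2 \lambda^2} \;=\; \exp\!\big(\lambda^2\sigma^2 - \lambda t\big).
\]
Next I would optimize the exponent $\lambda^2\sigma^2 - \lambda t$ over $\lambda \in (0,\,1/K_\ast]$. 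Its unconstrained minimizer is $\lambda^\star = t/(2\sigma^2)$; if $\lambda^\star \le 1/K_\ast$, i.e.\ $t \le 2\sigma^2/K_\ast$, then substituting gives $\PP\{S \ge t\} \le \exp(-t^2/(4\sigma^2))$, whereas in the complementary regime the exponent is decreasing on $[0,1/K_\ast]$, so taking $\lambda = 1/K_\ast$ and using $t > 2\sigma^2/K_\ast$ gives $\PP\{S \ge t\} \le \exp(\sigma^2/K_\ast^2 - t/K_\ast) \le \exp(-t/(2K_\ast))$. Combining the two regimes, $\PP\{S \ge t\} \le \exp\!\big(-\frac14\min\{t^2/\sigma^2,\ t/K_\ast\}\big)$ for every $t > 0$, and therefore $\PP\{|S| \ge t\} \le 2\exp\!\big(-\frac14\min\{t^2/\sigma^2,\ t/K_\ast\}\big)$.

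Finally I would invert this bound: choosing the threshold $t = C\big(\sigma\sqrt{\log(1/\delta)} + K_\ast\log(1/\delta)\big)$ forces both $t^2/\sigma^2 \ge C^2\log(1/\delta)$ and $t/K_\ast \ge C\log(1/\delta)$, so the right-hand side above is at most $2\exp\!\big(-\frac{C}{4}\log(1/\delta)\big) \le \delta$ once $C$ is a large enough absolute constant, which is precisely the claimed inequality (and it recovers Theorem~2.8.1 of \cite{vershynin2018high}). There is no substantial obstacle here: the only mildly delicate point is the case split in the $\lambda$-optimization, where one must check that in the regime $t > 2\sigma^2/K_\ast$ the constrained minimizer sits at the endpoint $\lambda = 1/K_\ast$; everything else is bookkeeping of absolute constants.
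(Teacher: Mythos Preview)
The paper does not give its own proof of this lemma; it is simply quoted from Theorem~2.8.1 of \cite{vershynin2018high}. Your argument is precisely the standard Chernoff-method derivation that appears there (and in essentially every treatment of Bernstein's inequality): bound the moment generating function via independence and the defining sub-exponential inequality~\eqref{eq:def-sub-exp}, optimize the tilting parameter with the usual two-regime case split, and invert the tail bound. It is correct in substance. The only cosmetic wrinkle is in the very last line: the inequality $2\delta^{C/4}\le\delta$ cannot hold for \emph{all} $\delta\in(0,1)$ with a single fixed $C$ (it fails as $\delta\to 1^-$), so one either restricts to $\delta\le 1/2$, or inverts using $\log(2/\delta)$ in place of $\log(1/\delta)$ and treats the range $\delta\in(1/2,1)$ by a separate trivial argument; this does not affect anything of substance.
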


\begin{lemma}[Bernstein's inequality for bounded distributions, Theorem~2.8.4 of \cite{vershynin2018high}]
\label{lem:bern-bdd}
Let $X_1, \dots, X_N$ be independent, zero-mean random variables such that $|X_i| \le K$ almost surely for a constant $K>0$ and each $i \in [N]$. 
Then there exists a universal constant $C>0$ such that for any $\delta \in (0,1)$, we have
$$
\PP \left\{ \left| \sum_{i=1}^N X_i \right| \ge C \left( \sqrt{ \sum_{i=1}^N \EE[X_i^2] \cdot \log \frac{1}{\delta} } + K \cdot \log \frac{1}{\delta} \right) \right\} \le \delta .
$$
\end{lemma}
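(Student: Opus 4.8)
The plan is to run the classical exponential-moment (Chernoff) argument directly, rather than appealing to Lemma~\ref{lem:bern}: a bounded centered variable $X_i$ is sub-exponential only with a parameter of order $K$, so invoking Lemma~\ref{lem:bern} would replace $\sum_i \EE[X_i^2]$ by $NK^2$ and lose exactly the variance sharpening we want. Instead I would bound moment generating functions by hand, exploiting the fact that bounded centered variables satisfy the Bernstein moment condition.

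First I would establish a single-summand MGF bound. Write $\sigma_i^2 := \EE[X_i^2]$ and $\sigma^2 := \sum_{i=1}^N \sigma_i^2 = \sum_{i=1}^N \EE[X_i^2]$. Since $\EE[X_i] = 0$ and $|X_i| \le K$ almost surely, $|X_i|^p = |X_i|^{p-2}|X_i|^2 \le K^{p-2}X_i^2$ gives $\EE[|X_i|^p] \le K^{p-2}\sigma_i^2$ for every integer $p \ge 2$. Expanding $e^{\lambda X_i}$ in a power series, noting the $p=0,1$ terms contribute $1 + 0$, substituting $q = p-2$, and using $(q+2)! \ge 2$, one gets for any $|\lambda| < 1/K$
$$
\EE\!\left[e^{\lambda X_i}\right] \le 1 + \sum_{p \ge 2}\frac{|\lambda|^p K^{p-2}\sigma_i^2}{p!} = 1 + \sigma_i^2\lambda^2\sum_{q \ge 0}\frac{(|\lambda|K)^q}{(q+2)!} \le 1 + \frac{\sigma_i^2\lambda^2/2}{1 - |\lambda|K} \le \exp\!\left(\frac{\sigma_i^2\lambda^2/2}{1 - |\lambda|K}\right),
$$
and, by independence of the $X_i$, the product bound $\EE[e^{\lambda S}] \le \exp\!\big(\tfrac{\sigma^2\lambda^2/2}{1-|\lambda|K}\big)$ for $S := \sum_{i=1}^N X_i$ and $|\lambda| < 1/K$.

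Next I would Chernoff-bound and optimize. For $t > 0$, choosing $\lambda = \tfrac{t}{\sigma^2 + Kt} \in (0, 1/K)$ in $\PP\{S \ge t\} \le e^{-\lambda t}\,\EE[e^{\lambda S}]$ makes the exponent collapse to the standard estimate $\PP\{S \ge t\} \le \exp\!\big(-\tfrac{t^2}{2(\sigma^2 + Kt)}\big) \le \exp\!\big(-\tfrac14\min\{t^2/\sigma^2,\, t/K\}\big)$ (the last step by splitting into the cases $\sigma^2 \ge Kt$ and $\sigma^2 < Kt$); applying the same to $-X_i$ and taking a union bound gives $\PP\{|S| \ge t\} \le 2\exp\!\big(-\tfrac14\min\{t^2/\sigma^2,\, t/K\}\big)$. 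Finally I would invert this into the stated form: with $t = C\big(\sqrt{\sigma^2\log(1/\delta)} + K\log(1/\delta)\big)$, both $t^2/\sigma^2$ and $t/K$ are at least $C\log(1/\delta)$, so the right-hand side is at most $2\delta^{C/4} \le \delta$ once $C$ is a sufficiently large universal constant (it suffices to take $\delta \le 1/2$, the only regime used in the paper). No step here is genuinely hard; the only mildly delicate point is the choice of $\lambda$ that produces the two-regime minimum in the Chernoff exponent and the subsequent clean inversion into the $\delta$-parametrized bound.
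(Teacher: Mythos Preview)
The paper does not prove this lemma at all; it is simply stated in the appendix as a standard tool, with a citation to Theorem~2.8.4 of Vershynin's textbook. Your Chernoff/MGF argument is the standard textbook proof and is correct: the single-summand bound via the Bernstein moment condition, the choice $\lambda = t/(\sigma^2 + Kt)$, and the two-regime splitting are all sound. The restriction to $\delta \le 1/2$ in the final inversion is harmless (and, as you note, is the only range the paper actually uses); if one wanted the full range $\delta \in (0,1)$ one could replace $\log(1/\delta)$ by $\log(2/\delta)$ in the threshold and re-absorb into $C$.
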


\begin{lemma}[Matrix Bernstein with truncation]
\label{lem:tmb}
Let $S_1, \dots, S_N$ be independent $n_1 \times n_2$ random matrices. 
For each $i \in [N]$, suppose that there is an event $\mathcal{E}_i$ measurable with respect to $S_i$ such that $\PP\{\mathcal{E}_i\} \ge 1 - \delta_1$ for $\delta_1 \in (0,1)$ and $\left\| \tilde S_i - \EE[ \tilde S_i ] \right\| \le L$ almost surely, where $\tilde S_i := S_i \cdot \mathbf{1}\{ \mathcal{E}_i \}$. 
Let $\sigma > 0$ be defined so that 
$$
\sigma^2 = \left\| \sum_{i=1}^N \left( \EE[ \tilde S_i \tilde S_i^\top ] - \EE[ \tilde S_i ] \EE[ \tilde S_i^\top ] \right) \right\| \lor \left\| \sum_{i=1}^N \left( \EE[ \tilde S_i^\top \tilde S_i ] - \EE[ \tilde S_i^\top ] \EE[ \tilde S_i ] \right)  \right\| . 
$$
Moreover, suppose that 
$\|\EE[S_i] - \EE[\tilde S_i]\| \le \Delta$
for a fixed $\Delta > 0$ and all $i \in [N]$. 
Then for any $\delta_2 \in (0, 1)$, 
$$
\PP \left\{ \left\| \sum_{i=1}^N (S_i - \EE[S_i]) \right\| \ge 2 \sigma \sqrt{ \log \frac{n_1 + n_2}{\delta_2} } + \frac{4L}{3} \log \frac{n_1 + n_2}{\delta_2} + N \Delta \right\} \le N \delta_1 + \delta_2 . 
$$
\end{lemma}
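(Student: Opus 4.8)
The plan is to combine a truncation step with the standard bounded matrix Bernstein inequality. First I would record the decomposition
\[
\sum_{i=1}^N \left( S_i - \EE[S_i] \right) = \sum_{i=1}^N \left( \tilde S_i - \EE[\tilde S_i] \right) + \sum_{i=1}^N \left( S_i - \tilde S_i \right) - \sum_{i=1}^N \left( \EE[S_i] - \EE[\tilde S_i] \right).
\]
The third sum is deterministic, and by the hypothesis $\|\EE[S_i] - \EE[\tilde S_i]\| \le \Delta$ it has operator norm at most $N\Delta$. For the second sum, observe that $S_i - \tilde S_i = S_i \cdot \mathbf{1}\{\mathcal{E}_i^c\}$ vanishes on $\mathcal{E}_i$; hence on the event $\mathcal{E} := \bigcap_{i=1}^N \mathcal{E}_i$, which has probability at least $1 - N\delta_1$ by a union bound, the second sum is identically $0$. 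So the whole problem reduces to controlling the first sum $\sum_{i} X_i$, where $X_i := \tilde S_i - \EE[\tilde S_i]$.

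Next I would apply matrix Bernstein (in its rectangular form, e.g.\ via the Hermitian dilation $\mathcal{H}(X) = \begin{bmatrix} 0 & X \\ X^\top & 0 \end{bmatrix}$ of dimension $n_1 + n_2$; see, e.g., \cite{vershynin2018high}) to the independent, mean-zero matrices $X_i$. These are bounded, $\|X_i\| \le L$ almost surely, by assumption, and a one-line computation gives $\EE[X_i X_i^\top] = \EE[\tilde S_i \tilde S_i^\top] - \EE[\tilde S_i]\EE[\tilde S_i^\top]$ and symmetrically for $\EE[X_i^\top X_i]$, so the matrix variance statistic of $\sum_i \mathcal{H}(X_i)$ is exactly the quantity $\sigma^2$ in the statement. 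This yields, for every $t > 0$,
\[
\PP\left\{ \left\| \sum_{i=1}^N X_i \right\| \ge t \right\} \le (n_1 + n_2) \exp\left( \frac{-t^2/2}{\sigma^2 + L t/3} \right).
\]
Setting $u := \log\frac{n_1+n_2}{\delta_2}$ and $t := 2\sigma\sqrt{u} + \frac{4L}{3} u$, write $a := 2\sigma\sqrt u$ and $b := \frac{4L}{3}u$; then $\tfrac{t^2}{2} = \tfrac{a^2}{2} + ab + \tfrac{b^2}{2} \ge \sigma^2 u + \tfrac{Lau}{3} + \tfrac{Lbu}{3} = \sigma^2 u + \tfrac{Ltu}{3}$, using $a^2/2 = 2\sigma^2 u \ge \sigma^2 u$, $ab \ge \tfrac{Lau}{3}$, and $b^2/2 \ge \tfrac{Lbu}{3}$. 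Hence the exponent above is $\le -u$, so the tail probability is at most $(n_1+n_2)e^{-u} = \delta_2$.

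To finish, I would intersect with the event $\mathcal{E}$: on $\mathcal{E} \cap \{\|\sum_i X_i\| < t\}$, which has probability at least $1 - N\delta_1 - \delta_2$, the decomposition gives $\|\sum_i (S_i - \EE[S_i])\| \le \|\sum_i X_i\| + 0 + N\Delta < t + N\Delta$, which is precisely the claimed threshold $2\sigma\sqrt{\log\frac{n_1+n_2}{\delta_2}} + \frac{4L}{3}\log\frac{n_1+n_2}{\delta_2} + N\Delta$; taking complements yields the stated probability bound $N\delta_1 + \delta_2$. I do not expect any genuine obstacle here: the argument is routine. The only points requiring care are the bookkeeping identifying the variance statistic of the centered truncated matrices with $\sigma^2$, the elementary two-term split used to invert the Bernstein tail into the stated form, and matching the precise constant ($4L/3$) and dimension factor ($n_1 + n_2$) against whichever formulation of matrix Bernstein one cites.
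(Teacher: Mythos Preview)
Your proof is correct and follows essentially the same approach as the paper: apply matrix Bernstein to the truncated, centered matrices $\tilde S_i - \EE[\tilde S_i]$, then pass back to the $S_i$ via the triangle inequality and a union bound over the events $\mathcal{E}_i$. The paper's proof is terser (it simply cites Tropp's Theorem~1.6.2 and says ``triangle inequality and a union bound''), while you have written out explicitly the decomposition and the inversion of the Bernstein tail to match the stated constants.
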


\begin{proof}
Applying the matrix Bernstein inequality (Theorem~1.6.2 of \cite{tropp2015introduction}) to $\tilde S_i$, we obtain 
$$
\PP \left\{ \left\| \sum_{i=1}^N (\tilde S_i - \EE[\tilde S_i]) \right\| \ge 2 \sigma \sqrt{ \log \frac{n_1 + n_2}{\delta_2} } + \frac{4L}{3} \log \frac{n_1 + n_2}{\delta_2} \right\} \le \delta_2 . 
$$
Using that $\|\EE[S_i] - \EE[\tilde S_i]\| \le \Delta$ and that $S_i = \tilde S_i$ with probability at least $1-\delta_1$, we can complete the proof by a triangle inequality and a union bound. 
\end{proof}

\begin{lemma}[Hanson--Wright \cite{rudelson2013hanson}]
\label{lem:hs}
Let $X \sim \mathcal{N}(0, I_n)$ and fix a matrix $H \in \RR^{n \times n}$. 
There exists a universal constant $C>0$ such that for any $\delta \in (0, 1)$, 
$$
\PP \left\{ \Big| X^\top H X - \EE[ X^\top H X ] \Big| \ge C \left( \|H\|_F \sqrt{ \log(1/\delta) } + \|H\| \log(1/\delta) \right) \right\} \le \delta . 
$$
\end{lemma}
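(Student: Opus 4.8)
The plan is to turn the quadratic form into a sum of independent, mean-zero, sub-exponential random variables and then invoke the scalar Bernstein inequality already available as Lemma~\ref{lem:bern}. First I would symmetrize: since $X^\top H X$ is a scalar it equals $X^\top H^\top X$, hence $X^\top H X = X^\top \tilde H X$ with $\tilde H := \frac{1}{2}(H + H^\top)$ symmetric; moreover $\|\tilde H\| \le \|H\|$ and $\|\tilde H\|_F \le \|H\|_F$ by the triangle inequality together with $\|H^\top\| = \|H\|$ and $\|H^\top\|_F = \|H\|_F$, so it suffices to prove the bound for a symmetric matrix. Next I would diagonalize $\tilde H = O\Lambda O^\top$ with $O$ orthogonal and $\Lambda = \mathrm{diag}(\lambda_1,\dots,\lambda_n)$, and use rotational invariance of the standard Gaussian: $Z := O^\top X \sim \mathcal{N}(0,I_n)$, so that $X^\top \tilde H X = \sum_{i=1}^n \lambda_i Z_i^2$ with $Z_1,\dots,Z_n$ i.i.d.\ $\mathcal{N}(0,1)$ and $\EE[X^\top \tilde H X] = \Tr(\tilde H) = \sum_i \lambda_i$. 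This yields the decomposition $X^\top H X - \EE[X^\top H X] = \sum_{i=1}^n \lambda_i (Z_i^2 - 1)$.

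The remaining steps are routine. I would check that each summand $\lambda_i(Z_i^2-1)$ is a centered sub-exponential random variable with parameter proportional to $|\lambda_i|$: the chi-square moment generating function gives $\EE\exp(s(Z_i^2-1)) = e^{-s}(1-2s)^{-1/2} \le e^{2s^2}$ for $|s|\le 1/3$, which in the notation of \eqref{eq:def-sub-exp} gives the claim (this is exactly the computation used in the proof of Lemma~\ref{lem:l1} specialized to one degree of freedom). Applying Lemma~\ref{lem:bern} with these parameters, and using that $\sum_i K_i^2$ is proportional to $\|\Lambda\|_F^2 = \|\tilde H\|_F^2$ while $\max_i K_i$ is proportional to $\max_i|\lambda_i| = \|\tilde H\|$, we obtain for every $\delta\in(0,1)$ that $|\sum_i \lambda_i(Z_i^2-1)| \le C'(\|\tilde H\|_F\sqrt{\log(1/\delta)} + \|\tilde H\|\log(1/\delta))$ with probability at least $1-\delta$, for a universal constant $C'$. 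Combining this with the norm bounds $\|\tilde H\|_F\le\|H\|_F$ and $\|\tilde H\|\le\|H\|$ from the first step finishes the proof.

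I do not expect a genuine obstacle: the only points that need a little care are the symmetrization step (checking that neither the operator nor the Frobenius norm increases) and the verification of the sub-exponential parameter of a shifted chi-square, both of which are elementary. If a black-box argument is preferred, one can instead quote the standard Hanson--Wright inequality from \cite{rudelson2013hanson} in the two-sided tail form $\PP\{|X^\top H X - \EE[X^\top H X]|\ge t\}\le 2\exp(-c\min(t^2/\|H\|_F^2,\,t/\|H\|))$ and then solve for the value of $t$ at which the right-hand side equals $\delta$; this recovers the displayed inequality up to the universal constant, and in fact does not require $X$ to be Gaussian, since independent mean-zero sub-Gaussian entries suffice.
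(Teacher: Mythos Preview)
Your proposal is correct. The paper itself does not supply a proof of this lemma: it is stated in the appendix as a standard tool with a citation to \cite{rudelson2013hanson}, so there is no argument to compare against beyond the black-box citation you mention at the end.

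That said, your self-contained route is worth noting. By exploiting the Gaussian assumption you reduce to a diagonal quadratic form via rotational invariance, after which the result is an immediate application of the scalar Bernstein inequality (Lemma~\ref{lem:bern}) already proved in the paper; this avoids the decoupling machinery that the general Hanson--Wright inequality for sub-Gaussian vectors requires. The trade-off is exactly as you say: the cited result of \cite{rudelson2013hanson} is more general (independent sub-Gaussian coordinates suffice), whereas your argument is elementary and keeps the appendix self-contained using only tools already present. Both the symmetrization step and the sub-exponential parameter for $\lambda_i(Z_i^2-1)$ are handled correctly.
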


\begin{lemma}[Chi-square tail bound]
\label{lem:chi-sq}
Let $Y$ be a chi-square random variable with $n$ degrees of freedom. 
There exists a universal constant $C>0$ such that for any $\delta \in (0,1)$,
$$
\PP \left\{ |Y - n| \ge C \left( \sqrt{n \log(1/\delta)} + \log(1/\delta) \right) \right\} \le \delta . 
$$
\end{lemma}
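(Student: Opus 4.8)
The plan is to realize $Y$ as a sum of $n$ i.i.d.\ centered sub-exponential random variables and then invoke Bernstein's inequality for sub-exponential distributions (Lemma~\ref{lem:bern}). Write $Y = \sum_{i=1}^n Z_i^2$ with $Z_1,\dots,Z_n$ i.i.d.\ $\mathcal{N}(0,1)$, and set $X_i := Z_i^2 - 1$, so that $\EE[X_i] = 0$ and $\sum_{i=1}^n X_i = Y - n$.

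The first step is to show that each $X_i$ is sub-exponential with a universal parameter $K$ in the sense of \eqref{eq:def-sub-exp}. This follows from the explicit moment generating function of a chi-square random variable with one degree of freedom: for $\lambda < 1/2$ we have $\EE[\exp(\lambda X_i)] = e^{-\lambda}(1-2\lambda)^{-1/2}$, and a short calculation (the same one already used in the proof of Lemma~\ref{lem:l1} for $N\|b_i\|^2 - (n-1)$) shows that $e^{-\lambda}(1-2\lambda)^{-1/2} \le e^{2\lambda^2}$ for all $\lambda \in [-1/4,1/4]$. Hence $X_i$ is sub-exponential with a universal parameter $K$; in particular $\sum_{i=1}^n K^2 = O(n)$ and $\max_{i} K = O(1)$.

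Applying Lemma~\ref{lem:bern} to $X_1,\dots,X_n$ then produces a universal constant $C'>0$ such that
$$
\PP\left\{ \left| \sum_{i=1}^n X_i \right| \ge C'\left( \sqrt{\, n \log(1/\delta)\,} + \log(1/\delta) \right) \right\} \le \delta
$$
for every $\delta \in (0,1)$. Recalling $\sum_i X_i = Y - n$ and renaming the constant gives exactly the claimed bound.

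I do not expect any genuine obstacle: the only nonroutine ingredient is the elementary inequality $e^{-\lambda}(1-2\lambda)^{-1/2} \le e^{2\lambda^2}$ on $[-1/4,1/4]$, which can be checked via a Taylor expansion of $\log(1-2\lambda)$, after which everything is a direct appeal to Lemma~\ref{lem:bern}. If one wanted the sharper Laurent--Massart constants instead, one could perform a direct Chernoff bound from the same MGF, optimizing over $\lambda$ separately for the upper and lower tails; but for the purposes of this paper the sub-exponential Bernstein route is both sufficient and shorter.
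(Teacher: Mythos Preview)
Your proposal is correct. Note that the paper does not actually supply a proof of Lemma~\ref{lem:chi-sq}: it is listed in the appendix among ``basic tools from probability and linear algebra, all of which are standard,'' with no argument given. Your derivation is the natural one, and in fact the paper carries out the identical MGF computation in the proof of Lemma~\ref{lem:l1} (there with $n-1$ in place of $1$ and the range $\lambda\in[-1/3,1/3]$), so your route is fully consistent with the tools and style of the paper.
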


\begin{lemma}[$\ell_p$-norm of a Gaussian vector]
\label{lem:gauss-lp}
Let $Z$ be a $\mathcal{N}(0, I_n)$ Gaussian vector, and let $Z_1$ denote its first entry. 
Fix a positive integer $p$. 
There exists a constant $C_p > 0$ such that for any $\delta \in (0,1)$,
$$
\PP \left\{ \left| \|Z\|_p^p - n \EE[|Z_1|^p] \right| \ge C_p \sqrt{n} \log^{p/2} (1/\delta) \right\} \le \delta . 
$$
\end{lemma}

\begin{proof}
Concentration of a polynomial of Gaussians is a well-studied topic (see, e.g., \cite{latala2006estimates}). 
The above simple bound follows immediately from Theorem~1.9 of \cite{schudy2012concentration}. 
\end{proof}

\begin{lemma}[Sample covariance concentration, (5.25) of \cite{vershynin}]
\label{lem:sum-mat}
Let $X_1, \dots, X_N$ be i.i.d.\ $\mathcal{N}(0, I_n)$ vectors. 
There exists a universal constant $C>0$ such that for any $\delta \in (0, 1)$, 
$$
\PP \left\{ \left\| \frac{1}{N} \sum_{i=1}^N X_i X_i^\top - I_n \right\| \ge C \left( \sqrt{ \frac{n + \log(1/\delta)}{N} } + \frac{n + \log(1/\delta)}{N} \right) \right\} \le \delta . 
$$
\end{lemma}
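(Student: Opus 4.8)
The plan is to run a standard $\epsilon$-net (covering) argument. Write $E := \frac{1}{N}\sum_{i=1}^N X_i X_i^\top - I_n$. Since $E$ is symmetric, $\|E\| = \sup_{\|u\|=1} |u^\top E u|$ and $u^\top E u = \frac{1}{N}\sum_{i=1}^N \langle X_i, u\rangle^2 - 1$. First I would fix a $\tfrac14$-net $\mathcal{N}$ of the unit sphere in $\RR^n$ with $|\mathcal{N}| \le 9^n$ (a standard volumetric bound), and use the elementary inequality $\|A\| \le 2 \max_{u \in \mathcal{N}} |u^\top A u|$, valid for any symmetric matrix $A$. This reduces the task to a uniform control of $|u^\top E u|$ over the finitely many $u \in \mathcal{N}$.

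Next I would fix a single $u \in \mathcal{N}$. Because $X_i \sim \mathcal{N}(0, I_n)$, the scalars $\langle X_i, u\rangle$ are i.i.d.\ $\mathcal{N}(0,1)$, so $\sum_{i=1}^N \langle X_i, u\rangle^2$ is chi-square with $N$ degrees of freedom. Applying the chi-square tail bound (Lemma~\ref{lem:chi-sq}) with confidence parameter $\delta/|\mathcal{N}|$ and dividing by $N$ gives, with probability at least $1 - \delta/|\mathcal{N}|$,
\[
\left| u^\top E u \right| \le C_0 \left( \sqrt{\frac{\log(|\mathcal{N}|/\delta)}{N}} + \frac{\log(|\mathcal{N}|/\delta)}{N} \right)
\]
for a universal constant $C_0$. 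Since $\log(|\mathcal{N}|/\delta) \le n \log 9 + \log(1/\delta) \le 3\big(n + \log(1/\delta)\big)$, the right-hand side is at most $C_1\big( \sqrt{(n+\log(1/\delta))/N} + (n+\log(1/\delta))/N \big)$. Taking a union bound over the at most $9^n$ points of $\mathcal{N}$, the displayed bound then holds simultaneously for all $u \in \mathcal{N}$ with probability at least $1-\delta$, whence $\|E\| \le 2\max_{u \in \mathcal{N}}|u^\top E u| \le 2 C_1\big(\sqrt{(n+\log(1/\delta))/N} + (n+\log(1/\delta))/N\big)$, which is the claim with $C := 2C_1$.

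I do not expect any genuine obstacle here; the only point that needs care is the bookkeeping of the net's entropy. It is precisely the $n\log 9$ term inside $\log(|\mathcal{N}|/\delta)$ that produces the dimension dependence $n$ in the final bound (rather than merely $\log(1/\delta)$), so one must not discard it, and the factor $2$ from the net-to-sphere transfer together with the constant from Lemma~\ref{lem:chi-sq} should be absorbed into a single universal constant. As an alternative route, one could instead apply the matrix Bernstein inequality with truncation (Lemma~\ref{lem:tmb}) to the summands $X_i X_i^\top$, truncating on the high-probability event $\{\|X_i\|^2 \le C(n + \log(N/\delta))\}$; this also yields the stated bound but is slightly more cumbersome than the net argument.
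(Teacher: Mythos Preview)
Your proposal is correct and is precisely the standard $\epsilon$-net argument used to establish this bound. The paper itself does not give a proof of this lemma: it is stated as a citation to (5.25) of Vershynin's notes, without any argument supplied. Your proof is essentially the one found in that reference, so there is nothing to compare against in the paper proper.
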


\begin{lemma}[Davis--Kahan, Theorem~4 of \cite{o2018random}]
\label{lem:dk}
Let $A$ and $B$ be $n \times n$ matrices. Let $u$ and $v$ be the leading left (or right) singular vectors of $A$ and $B$ respectively. 
Let $\Delta > 0$ denote the gap between the largest and the second largest singular value of $A$. Up to a sign flip of $v$, we have 
$$
\|u - v\| \le \sqrt{2}\, \sin \Theta(u, v) \le 2 \sqrt{2}\, \frac{\|A - B\|}{\Delta} . 
$$
\end{lemma}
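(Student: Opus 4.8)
The plan is to reduce to the symmetric Davis--Kahan $\sin\Theta$ theorem via the Jordan--Wielandt dilation, and then convert the sine bound into the stated chordal bound. First I would handle the left inequality: choosing the sign of $v$ so that $c := \langle u,v\rangle \in [0,1]$ gives $\|u-v\|^2 = 2(1-c)$ and $\sin^2\Theta(u,v) = 1-c^2 = (1-c)(1+c) \ge 1-c$, hence $\|u-v\| \le \sqrt2\,\sin\Theta(u,v)$. It then remains to prove the sign-free estimate $\sin\Theta(u,v) \le 2\|A-B\|/\Delta$.

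For this, let $\tilde A$ and $\tilde B$ be the symmetric $2n\times 2n$ dilations with vanishing diagonal blocks and upper-right blocks $A$ and $B$ respectively (so the lower-left blocks are $A^\top$ and $B^\top$). The eigenvalues of $\tilde A$ are $\pm\sigma_i(A)$, an eigenvector for $\sigma_i(A)$ being $\frac1{\sqrt2}(a_i;b_i)$ with $a_i,b_i$ the associated left and right singular vectors; sorted decreasingly, the spectrum of $\tilde A$ starts $\sigma_1 \ge \sigma_2 \ge \cdots$, so the gap between its two largest eigenvalues is exactly $\Delta$, its top eigenvector is $\tilde u := \frac1{\sqrt2}(u;u')$ (with $u,u'$ the leading left and right singular vectors of $A$), and likewise $\tilde v := \frac1{\sqrt2}(v;v')$ for $B$; also $\|\tilde A-\tilde B\| = \|A-B\|$ since the dilation preserves the spectral norm. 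I would then run the standard resolvent argument for symmetric Davis--Kahan: with $P := \tilde u\tilde u^\top$, $\lambda_1$ the top eigenvalue of $\tilde A$, and $E := \tilde A-\tilde B$, the identity $\tilde A P = \lambda_1 P$ yields $(I-P)\tilde A = (I-P)\tilde A(I-P)$, and combining this with $(I-P)\tilde B\tilde v = \lambda_1(\tilde B)(I-P)\tilde v$ gives
\[ \bigl[(I-P)\tilde A(I-P) - \lambda_1(\tilde B)\,I\bigr]\,(I-P)\tilde v = (I-P)E\tilde v . \]
On the range of $I-P$ the operator $(I-P)\tilde A(I-P)$ has eigenvalues $\lambda_2 \ge \lambda_3 \ge \cdots$, so by Weyl's inequality the bracketed operator is invertible there with inverse of norm at most $(\lambda_1(\tilde B)-\lambda_2)^{-1} \le (\Delta-\|E\|)^{-1}$; therefore $\sin\Theta(\tilde u,\tilde v) = \|(I-P)\tilde v\| \le \|E\|/(\Delta-\|E\|)$. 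If $\|A-B\| \le \Delta/2$ this is at most $2\|A-B\|/\Delta$, and if $\|A-B\| > \Delta/2$ then $2\|A-B\|/\Delta > 1 \ge \sin\Theta$, so the bound $\sin\Theta(\tilde u,\tilde v) \le 2\|A-B\|/\Delta$ holds in all cases.

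Finally I would transfer this back to $u,v$. Set $a := \langle u,v\rangle$ and $b := \langle u',v'\rangle$; the pair $(v,v')$ carries a single joint sign (flipping it fixes $\tilde v$ as an eigenvector of $\tilde B$), so $(a+b)^2$ is well defined, and $\sin^2\Theta(\tilde u,\tilde v) = 1-\frac14(a+b)^2$ while $\sin^2\Theta(u,v) = 1-a^2$. Since $(a+b)^2 \le 2a^2+2b^2 \le 2a^2+2$, one gets $1-a^2 \le 2\bigl(1-\frac14(a+b)^2\bigr)$, i.e.\ $\sin\Theta(u,v) \le \sqrt2\,\sin\Theta(\tilde u,\tilde v) \le 2\sqrt2\,\|A-B\|/\Delta$; together with the first step this gives $\|u-v\| \le \sqrt2\,\sin\Theta(u,v) \le 2\sqrt2\,\|A-B\|/\Delta$. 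The right-singular-vector case is identical, using the bottom block of the dilation eigenvectors. The only genuinely delicate point is the linear $\sin\Theta$ bound in the dilated problem: a naive Rayleigh-quotient comparison of $\tilde u^\top\tilde A\tilde u$ with $\tilde v^\top\tilde A\tilde v$ only produces a square-root estimate $\sin^2\Theta \lesssim \|A-B\|/\Delta$, so one must use the Sylvester-type identity above along with the case split that keeps the perturbed eigengap bounded below by $\Delta/2$.
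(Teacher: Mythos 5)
The paper does not prove this lemma itself --- it is quoted verbatim from Theorem~4 of \cite{o2018random} --- so your self-contained dilation argument is a genuinely different route. Most of it is correct: the reduction $\|u-v\|\le\sqrt2\sin\Theta(u,v)$, the spectral facts about the Jordan--Wielandt dilation, and the resolvent/Sylvester argument giving $\sin\Theta(\tilde u,\tilde v)\le\|E\|/(\Delta-\|E\|)\le 2\|A-B\|/\Delta$ (with the case split) are all sound.

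The gap is in the final transfer and the arithmetic that follows it. Your inequality $1-a^2\le 2\bigl(1-\tfrac14(a+b)^2\bigr)$ gives $\sin\Theta(u,v)\le\sqrt2\,\sin\Theta(\tilde u,\tilde v)$, hence only $\sin\Theta(u,v)\le 2\sqrt2\,\|A-B\|/\Delta$, whereas the lemma asserts $\sin\Theta(u,v)\le 2\|A-B\|/\Delta$; and from your bound the first step then yields $\|u-v\|\le\sqrt2\cdot 2\sqrt2\,\|A-B\|/\Delta=4\|A-B\|/\Delta$, not $2\sqrt2\,\|A-B\|/\Delta$ as you write --- the last sentence silently drops a factor of $\sqrt2$. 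Moreover this loss is intrinsic to the transfer step: the ratio $(1-a^2)/\bigl(1-\tfrac14(a+b)^2\bigr)$ approaches $2$ as $a\to1$, $b\to1$, so no improvement of that elementary inequality alone can recover the constant. To get the stated constants one needs either the direct argument of \cite{o2018random} on the singular vectors themselves, or a Wedin-type version of your resolvent step that projects off the two-dimensional space $\mathrm{span}\{(u;0),(0;u')\}$ and controls $\sin^2\Theta(u,v)+\sin^2\Theta(u',v')$ jointly rather than passing through the single angle $\Theta(\tilde u,\tilde v)$. For the purposes of this paper the weaker constant $4$ would be harmless, since every downstream use of the lemma absorbs absolute constants, but as a proof of the lemma as stated the argument falls short by a factor of $\sqrt2$.
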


\begin{lemma}
\label{lem:l2-l1}
Let $G$ be an $N \times n$ matrix with i.i.d.\ standard Gaussian entries. For any constant $c_1>0$, there exist constants $C_2, c_3 > 0$ such that if $N \ge C_2 n$, then  
\begin{equation}
\PP \bigg\{ \max_{u \in \RR^n} \bigg| \frac{\|G u\|}{\|u\|} - \sqrt{N} \bigg| \ge c_1 \sqrt{N}  \bigg\} \le 2 \exp( - N) 
\label{eq:l2-l2}
\end{equation}
and
\begin{equation}
\PP \bigg\{ \max_{u \in \RR^n} \bigg| \frac{\|G u\|_1}{\|u\|} - \sqrt{\frac{2}{\pi}} N \bigg| \ge c_1 N  \bigg\} \le 2 \exp( - c_3 N) . 
\label{eq:l2-l1}
\end{equation}
\end{lemma}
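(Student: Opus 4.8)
The plan is to prove the two inequalities separately. By scale invariance it suffices throughout to take the maximum over unit vectors $u$ in the unit sphere $S^{n-1}\subset\RR^n$, and we may assume $c_1\le 1$, since the events in question only shrink as $c_1$ grows (so the constants chosen for $c_1=1$ also work for all larger $c_1$).

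For \eqref{eq:l2-l2} I would reduce to sample-covariance concentration. Writing $g_1^\top,\dots,g_N^\top$ for the rows of $G$, which are i.i.d.\ $\mathcal{N}(0,I_n)$, we have $\|Gu\|^2=u^\top G^\top G\,u=u^\top\big(\sum_{i=1}^N g_ig_i^\top\big)u$, so $\big|\,\|Gu\|^2-N\,\big|\le N\big\|\frac1N\sum_{i=1}^N g_ig_i^\top-I_n\big\|$ simultaneously for every unit $u$. Lemma~\ref{lem:sum-mat} bounds this operator-norm deviation by any prescribed small constant $\epsilon_0$ on an event of probability $1-2\exp(-\Omega(N))$, provided $N\ge C_2 n$ with $C_2$ large in terms of $\epsilon_0$; taking $\epsilon_0$ small in terms of $c_1$ and using $\sqrt{1\pm\epsilon_0}=1+O(\epsilon_0)$ then yields $\big|\,\|Gu\|-\sqrt N\,\big|\le c_1\sqrt N$ for all unit $u$, which is \eqref{eq:l2-l2}. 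In particular, on this event $s_{\max}(G):=\max_{u\in S^{n-1}}\|Gu\|\le 2\sqrt N$; I will reuse this bound below.

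For \eqref{eq:l2-l1} the first step is a pointwise estimate. For a fixed unit $u$ we have $Gu\sim\mathcal{N}(0,I_N)$, so $\|Gu\|_1=\sum_{i=1}^N|(Gu)_i|$ is a sum of $N$ i.i.d.\ half-normal random variables, each with mean $\sqrt{2/\pi}$; since $x\mapsto\|x\|_1$ is $\sqrt N$-Lipschitz on $\RR^N$, Gaussian concentration (equivalently, Bernstein's inequality applied to the sub-Gaussian summands) gives $\PP\big\{\big|\,\|Gu\|_1-\sqrt{2/\pi}\,N\,\big|\ge t\big\}\le 2\exp(-c\,t^2/N)$, so with $t=(c_1/3)N$ the failure probability is at most $2\exp(-c'c_1^2 N)$. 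Next I would fix an $\epsilon$-net $\mathcal{T}_\epsilon$ of $S^{n-1}$ with $\epsilon$ a small multiple of $c_1$ and $|\mathcal{T}_\epsilon|\le(3/\epsilon)^n$, union-bound the pointwise estimate over $\mathcal{T}_\epsilon$, and absorb the entropy factor $\exp(n\log(3/\epsilon))$ into the exponent $c'c_1^2 N$ by enlarging $C_2$ (hence the ratio $N/n$) depending on $c_1$; this leaves a net event of probability $1-2\exp(-c_3 N)$ on which $\big|\,\|Gv\|_1-\sqrt{2/\pi}N\,\big|\le(c_1/3)N$ for every $v\in\mathcal{T}_\epsilon$. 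Finally I would transfer to arbitrary unit $u$ on the intersection with the event from \eqref{eq:l2-l2}: choosing $v\in\mathcal{T}_\epsilon$ with $\|u-v\|\le\epsilon$, we get $\big|\,\|Gu\|_1-\|Gv\|_1\,\big|\le\|G(u-v)\|_1\le\sqrt N\,\|G(u-v)\|\le\sqrt N\,s_{\max}(G)\,\|u-v\|\le 2N\epsilon$, which is at most $(c_1/6)N$ for $\epsilon$ small enough in terms of $c_1$, so $\big|\,\|Gu\|_1-\sqrt{2/\pi}N\,\big|<c_1 N$ uniformly; a union bound over the two bad events gives \eqref{eq:l2-l1}.

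The step I expect to require the most care is this last one — balancing the net cardinality $\exp(n\log(1/c_1))$ against the pointwise Gaussian concentration $\exp(-\Omega(c_1^2 N))$, which is exactly what forces the hypothesis $N\ge C_2 n$ with $C_2$ depending on $c_1$ — together with the bookkeeping needed to have the $\ell_2$ bound \eqref{eq:l2-l2} in hand (so that $s_{\max}(G)\le 2\sqrt N$ holds uniformly) before carrying out the net-to-sphere transfer for the $\ell_1$ norm. Everything else is a routine application of Lemma~\ref{lem:sum-mat}, the chi-square tail bound, and standard Gaussian concentration.
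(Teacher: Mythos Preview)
Your argument is correct. The paper's own proof is a two-line citation: Theorem~4.6.1 of \cite{vershynin2018high} for \eqref{eq:l2-l2} and Lemma~A.14 of \cite{qu2016finding} for \eqref{eq:l2-l1} (with the remark that the latter's proof actually yields exponent $-c_3 N$ rather than the $-c_3 n$ stated there). Your self-contained route---sample-covariance concentration via Lemma~\ref{lem:sum-mat} for the $\ell_2$ part, and pointwise Gaussian Lipschitz concentration of $\|Gu\|_1$ plus an $\epsilon$-net and a net-to-sphere transfer using $s_{\max}(G)\le 2\sqrt N$ for the $\ell_1$ part---is essentially what those cited references do, so the underlying mathematics is the same; you have simply unpacked the black boxes.

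One small remark: your argument yields failure probability $2\exp(-cN)$ in \eqref{eq:l2-l2} with $c$ depending on $c_1$, not the exact $2\exp(-N)$ stated. This is not a flaw in your reasoning; the cited Vershynin bound has the same feature for small $c_1$ (the deviation $CK^2(\sqrt n+t)$ cannot be made $\le c_1\sqrt N$ while also taking $t=\sqrt N$), so the lemma as printed appears to be slightly imprecise. It is immaterial for the paper, which only ever uses failure probability $N^{-\omega(1)}$.
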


\begin{proof}
First, \eqref{eq:l2-l2} is a consequence of Theorem~4.6.1 of \cite{vershynin2018high}. 
Second, \eqref{eq:l2-l1} follows from Lemma~A.14 of \cite{qu2016finding}. 
To be more precise, the error probability in Lemma~A.14 of \cite{qu2016finding} has the form $2 \exp(-c_3 n)$ with $n$ in the exponent, but it is easy to see from the proof that the error probability is in fact at most $2 \exp(-c_3 N)$. 
\end{proof}

\section*{Acknowledgments}

We thank Oded Regev and Min Jae Song for helpful discussions that inspired the low-degree formula in Lemma~\ref{lem:formula}, and thank Yihong Wu for discussions on spectral perturbation. 
We thank Hongjie Chen, Tommaso d'Orsi, Daniel Hsu, Philippe Rigollet, Kaizheng Wang, and anonymous reviewers for helpful comments on an earlier version of this work.

\bibliographystyle{alpha}
\bibliography{main}

\end{document}